\documentclass[dvips,preprint,imslayout]{imsart} 

\RequirePackage[OT1]{fontenc}
\RequirePackage{amsthm,amsmath,amssymb,natbib}
\RequirePackage[colorlinks,citecolor=blue,urlcolor=blue]{hyperref}
\RequirePackage{hypernat}
\RequirePackage{psfrag}
\RequirePackage[final]{graphicx}
\RequirePackage{pb-diagram,pb-xy}
\RequirePackage[cmtip,arrow]{xy}


\startlocaldefs
 \newcommand{\PP}{\mathbb P}
\newcommand{\R}{\mathbb R}
\newcommand{\EE}{\mathbb E}
\newcommand{\NN}{\mathbb N}

\theoremstyle{plain}

\newtheorem{theorem}{Theorem}

\newtheorem{lemma}[theorem]{Lemma}
\newtheorem{corollary}[theorem]{Corollary}

\theoremstyle{definition}
\newtheorem{definition}{Definition}

\theoremstyle{remark}
\newtheorem{example}{Example}
\newtheorem{remark}{Remark}

\endlocaldefs

\begin{document}

\begin{frontmatter}

\title{Approximation of target problems in Blackwell spaces}
\runtitle{Target problems in Blackwell spaces}


\begin{aug}
\author{\fnms{Giacomo} \snm{Aletti}\thanksref{m1}\corref{}\ead[label=e1]{giacomo.aletti@unimi.it}
\ead[label=u1,url]{http://www.mat.unimi.it/~aletti}}
\author{\fnms{Diane} \snm{Saada}\thanksref{m2}\ead[label=e2]{dianes@mscc.huji.ac.il}}
\affiliation{University of Milan\thanksmark{m1} and
The Hebrew University of Jerusalem\thanksmark{m2}}
\runauthor{G.~Aletti, D.~Saada}

\address{Giacomo Aletti\\Dipartimento di Matematica \\ Via Saldini, 50 \\ 20133, Milano (MI),
ITALY\\ \printead{e1}\\ \printead{u1}}
\address{Diane Saada\\The Hebrew University of Jerusalem, \\Jerusalem, ISRAEL
\\\printead{e2}}
\end{aug}

\begin{abstract}
On a weakly Blackwell space we show how to define a Markov
chain approximating problem, for the target problem. The
approximating problem is proved to converge to the optimal reduced
problem under different pseudometrics. A
computational example of compression of information is discussed.
\end{abstract}

\begin{keyword}[class=AMS]
\kwd[Primary ]{60J10}
\kwd[; secondary ]{60J05}
\kwd{05C20}
\end{keyword}


\end{frontmatter}

Let $X_n$ be an homogeneous Markov chain. Suppose the process stops once it reaches an absorbing class, called the target,
according to a given stopping rule: the resulting problem is called
target problem (TP). The idea is to reduce the available information
in order to only use the necessary information which is relevant
with respect to the target. A new Markov chain, associated with a
new equivalent but reduced matrix is defined. In the (large) finite
case, the problem has been solved for TPs: in \cite{A06,A06b,GAEM},
it has been proved that any TP on a finite set of states has its
``best target'' equivalent Markov chain. Moreover, this chain is
unique and there exists a polynomial time algorithm to reach this
optimum.

The question is now to find, in generality, an
$\epsilon$--approximation of the Markov problem when the state space
is measurable. The idea is to merge into one group the points that
$\epsilon$-behave the same with respect to the objective, but also
in order to keep an almost equivalent Markov chain, with respect to
the other ``groups''. The construction of these groups is done
through equivalence relations and hence each group corresponds to a
class of equivalence. In fact, there are many other mathematical
fields where approximation problems are faced by equivalences. For
instance, in integration theory, we use simple functions, in
functional analysis, we use the density of countable generated
subspaces and in numerical analysis, we use the finite elements
method.

In this paper, the approximation is made by means
 of discrete equivalences, which will be defined
in the following. The purpose of any approximation is to reach the
exact solution when $\epsilon \rightarrow 0$. We prove that the
sequence of approximations tends to the optimal exact equivalence
relation defined in \cite{A06,A06b,GAEM}, when we refine the groups.
Finer equivalence will imply better approximation, and accordingly
the limit will be defined as a countably generated equivalence.

Under a very general Blackwell type hypothesis on the measurable space,
we show that it is equivalent to speak on countably generated equivalence
relationships or on measurable real functions on the measurable space of states.
If we do not work under this framework of
Blackwell spaces, we can be faced to paradoxes, as it is explained
by \cite{Dubra-Echenique-2004}, of enlarging $\sigma$--algebras, while decreasing the
information available to a decision-maker.
The $\epsilon$--approximation of the Markov chain depends always upon
the kind of objective. In \cite{J06}, Jerrum deals with ergodic Markov chains. 
His objective is to approximate the stationary distribution by means of a discrete 
approximating Markov chain, whose limit distribution is close in a certain 
sense to the original one. However, unlike our following work, his purpose 
is not the explicit and unified construction of the approximating process.
In this paper, we focus on the target problem.
We solve extensively the TP, where the objective is connected
with the conditional probability of reaching the target $T$, namely
$\PP(X_n \in T \vert X_0=x)$, for any $n,x$.
 This part extends the work in
\cite{A06,A06b,GAEM}, since TPs' approximation may help to
understand the behavior of those TPs where the best equivalent
Markov chain is also very large. The setting of an approximating problem 
can be extended to a general form, but we will not develop it in this paper.
\section{Main results}
Let $(X,\mathcal X)$ be a measurable space. We equip it with an
Assumption~\eqref{eq:A0def} that will be explained when required.
Let $P$ be any
transition probability on $(X,\mathcal X)$. An homogeneous Markov process $(X_n)_n$ is naturally associated to
$(X,\mathcal X,P)$. In the
target problem, we are interested in the probabilities of reaching
the target class $T$ within $n$ steps, namely in
\[
\PP\big(\{X_n\in T\} \big\vert X_0=x\big)
\qquad \text{for any $n$ and $x$}.
\]
The set $T$ is a priori
given and does not change through the computations.
\begin{definition}
Let $(X,\mathcal X)$ be a measurable space and let $T\in{\mathcal X}$.
Let ${\mathcal F}\subseteq{\mathcal X}$ be a
sub $\sigma$-algebra of $\mathcal X$ such that $T\in {\mathcal F}$.
A function $P :X \times {\mathcal F}\rightarrow [0,1]$ is a
\emph{transition probability on $(X,\mathcal F)$} if
 \begin{itemize}
 \item $P(x,\cdot)$ is a probability measure on ${\mathcal F}$, for any
$x\in X$;
 \item $P(\cdot,F)$ is ${\mathcal F}$-measurable, for any
 $F\in{\mathcal F} $.
 \end{itemize}
Given a transition probability $P$ on $(X,\mathcal F)$,
we denote by $P^n$ the transition probability on $(X,\mathcal F)$
given inductively by
\[
P^1 = P; \qquad P^{n+1}(x,F) = \int_X P(x,dy)P^{n}(y,F)
\]
We denote by $\mathrm{TrP}(X,\mathcal X,\mathcal F)$ the set of the
transition probabilities on $(X,\mathcal F)$.
We denote by
${\mathbb{TP}}_X= \cup_{\mathcal F \subseteq \mathcal X}
\mathrm{TrP}(X,\mathcal X,\mathcal F)$ the set of all
transition probabilities on $X$, that we equip with a
suitable pseudometric $d$

\begin{equation*}
d(P_1,P_2)=\sup_x\sum_n\beta^n \big\vert P_1^n(x, T) - P_2^n(x,
T)\big\vert.
\end{equation*}
It is such that
\[
d(P_1,P_2) = 0 \qquad \Longleftrightarrow \qquad P_1^n(x,T) =
P_2^n(x,T), \quad\text{for any $n$ and $x$}.
\]
This pseudo-metric is obviously compatible with the target $T$ and
appears as a loss function of using $P_2$ instead of $P_1$, whatever
is the initial point $x$ ($\beta\in(0,1)$ is a discount rate). The point
will be, with no surprise, to let this distance go to $0$.
\end{definition}
A target problem is defined through a transition probability
$P\in({\mathbb{TP}}_X,d)$.
\begin{definition}
A \emph{target problem} is a quadruple $(X,{\mathcal F},T,P)$,
where $P\in \mathrm{TrP}(X,\mathcal X,\mathcal F)$ and $T\in\mathcal F$.
A \emph{simple target problem} is a target problem where
${\mathcal F}$ is generated by an at most countable partition of $X$.
\end{definition}
The main purpose of this paper is to approximate any target problem
with a sequence of simple target problems in the spirit of the construction
the Lebesgue integral, where the integral  of
a function $f$ is approximated by the integral of simple functions
$f_n=\sum_i c_i I_{C_i}$. The Lebesgue approximation requires at each step $n\in\NN$
two choices: the choice of the subdivision $(C_i)_i$ and the
choice of the function values $(c_i)_i$ on each subdivision.
\begin{definition}
We call \emph{strategy} $\mathrm{Str}$
a sequence of maps $({\mathrm{Str}}_n)_n$ from the set of the target problems
to the set of the simple target problems.
\end{definition}
In the ``Lebesgue example'' given above, the strategy is related to
the ``objective'' of the problem (the integral) and  the pseudometric
$d(f,f_n)=\int|f-f_n|dx$ is required to go to $0$ as $n$ goes to infinity.
Here also, a strategy is meaningful if $d(P,{\mathrm{Str}}_n(P))$ tends
to $0$ as $n$ goes to infinity.
Moreover, for what concerns applications, given a target problem
$(X,\mathcal X,T,P)$
a good strategy should not need the computation of $P^n$, $n>1$.
The first main result of this paper is the existence of a class of good strategies,
called \emph{target algorithms}.
\begin{theorem}\label{thm:mainta}
For any target problem $(X,\mathcal F,T,P)$ and any target algorithm
$\mathrm{Str}$,
\[
\lim_{n\to\infty}d(P,P_n) = 0,
\]
where
$(X,{\mathcal F}_n,T,P_n)={\mathrm{Str}}_n(X,{\mathcal F},T,P) $.
\end{theorem}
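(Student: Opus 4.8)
The plan is to decouple two features of the problem that act independently: the geometric discounting $\beta^k$ in the pseudometric $d$, which lets one treat each time horizon separately, and the one‑step fidelity built into the notion of a target algorithm. Since $P^k(x,T),P_n^k(x,T)\in[0,1]$, pulling the supremum inside the sum gives
\[
d(P,P_n)=\sup_x\sum_{k\ge 1}\beta^k\big|P^k(x,T)-P_n^k(x,T)\big|\ \le\ \sum_{k\ge 1}\beta^k\,\alpha_k(n),\qquad \alpha_k(n):=\sup_x\big|P^k(x,T)-P_n^k(x,T)\big|\in[0,1].
\]
As $k\mapsto\beta^k$ is summable, by dominated convergence over the index $k$ it suffices to show $\alpha_k(n)\to 0$ as $n\to\infty$ for every \emph{fixed} $k$; so the whole statement reduces to a single‑horizon estimate.

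First I would record what a target algorithm provides in usable form: there is a sequence $\epsilon_n\downarrow 0$ such that, for every $x$ and every $\mathcal F_n$‑measurable $g\colon X\to[0,1]$,
\[
\Big|\int_X P(x,dy)\,g(y)-\int_X P_n(x,dy)\,g(y)\Big|\ \le\ \epsilon_n,
\]
equivalently $\sup_x$ of the total‑variation distance between $P(x,\cdot)$ restricted to $\mathcal F_n$ and $P_n(x,\cdot)$ is at most $\epsilon_n$ (recall $\mathcal F_n\subseteq\mathcal F$ and $T\in\mathcal F_n$, which makes the integrals, and below the choice $g=I_T$, legitimate). Then I would prove $\alpha_k(n)\le k\,\epsilon_n$ by induction on $k$. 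For $k=1$, taking $g=I_T$ gives $\alpha_1(n)\le\epsilon_n$. For the inductive step, use the kernel recursions and add and subtract $\int_X P(x,dy)\,P_n^k(y,T)$:
\[
P^{k+1}(x,T)-P_n^{k+1}(x,T)=\int_X P(x,dy)\big[P^k(y,T)-P_n^k(y,T)\big]+\Big(\int_X P(x,dy)\,P_n^k(y,T)-\int_X P_n(x,dy)\,P_n^k(y,T)\Big).
\]
The first summand is bounded in absolute value by $\alpha_k(n)$ because $P(x,\cdot)$ is a probability measure; the second is bounded by $\epsilon_n$ by the one‑step inequality, since $y\mapsto P_n^k(y,T)$ is $\mathcal F_n$‑measurable with values in $[0,1]$. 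Taking $\sup_x$ yields $\alpha_{k+1}(n)\le\alpha_k(n)+\epsilon_n$, hence $\alpha_k(n)\le k\,\epsilon_n$; together with $\alpha_k(n)\le 1$ this gives $\alpha_k(n)\le\min(k\epsilon_n,1)$, and the first paragraph then delivers $d(P,P_n)\le\sum_{k\ge1}\beta^k\min(k\epsilon_n,1)\to 0$ as $n\to\infty$ (dominated convergence, dominating function $\beta^k$).

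The delicate part is not this bookkeeping but what legitimises the one‑step inequality uniformly in $x$: one must be able to choose the at‑most‑countable partition generating $\mathcal F_n$ so that the measure‑valued map $x\mapsto P(x,\cdot)|_{\mathcal F_n}$ is uniformly within $\epsilon_n$ (in total variation) of a ``piecewise constant'' (i.e.\ $\mathcal F_n$‑measurable) transition probability $P_n$ on $(X,\mathcal F_n)$, and to do so from $P$ alone. That such partitions exist — indeed that a countable partition can carry all the information relevant to the target at all — is exactly where the weak‑Blackwell Assumption~\eqref{eq:A0def} is indispensable, and this is the content of the construction of target algorithms (the first main result, extending the finite‑state theory of \cite{A06,A06b,GAEM}); here we only use its output. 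A secondary point worth flagging is that the per‑horizon error $k\epsilon_n$ grows with $k$, so the argument never produces an approximation that is uniformly good over all horizons simultaneously: convergence of $d$ is rescued solely by combining the trivial bound $\alpha_k(n)\le 1$ with the geometric weights $\beta^k$ — the discounting does precisely the job for which it was introduced. One also notices in passing that monotonicity of the family $(\mathcal F_n)_n$ is not actually needed for this convergence statement; only the stagewise fidelity $\epsilon_n\to 0$ enters.
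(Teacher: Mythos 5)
Your architecture coincides with the paper's: fix the horizon $k$, prove $\sup_x\vert P^k(x,T)-P_n^k(x,T)\vert\to 0$ by induction on $k$ with each step costing one extra $\epsilon$, and let the geometric weights $\beta^k$ absorb the fact that the per-horizon error grows with $k$. The bookkeeping ($\alpha_{k+1}(n)\le\alpha_k(n)+\epsilon_n$, hence $\alpha_k(n)\le\min(k\epsilon_n,1)$, then dominated convergence in $k$) is correct and is, if anything, a cleaner quantitative packaging of exactly the estimate the paper carries out.

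The difficulty is the ingredient you explicitly decline to prove: the uniform one-step bound $\sup_x\sum_i\vert P_n(x,A_i^{(n)})-P(x,A_i^{(n)})\vert\le\epsilon_n$, equivalently your inequality for all $\mathcal F_n$-measurable $g:X\to[0,1]$. This is not an ``output'' of the definition of a target algorithm that you can cite — the result you invoke for it (``the first main result'') is the very theorem being proved — and it is precisely the substantive content of the paper's argument, which derives it from the explicit formula \eqref{eq:def_of_P_n}: $P_n(x,F)$ is the $\mu$-average of $P(z,F)$ over $z\in[x]_n$, and the rows of $\pi_n$-equivalent points are $\ell_1$-close by \eqref{eq:projectionsta}. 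There is moreover an indexing subtlety hiding here: $z\,\pi_n\,x$ controls $\sum_i\vert P(z,A_i^{(n-1)})-P(x,A_i^{(n-1)})\vert\le\epsilon_n$, i.e.\ the rows over the $(n-1)$-st partition; closeness of the rows over the $n$-th partition is guaranteed only for $\pi_{n+1}$-equivalent points. Since $P_n(x,\cdot)$ averages over the $\pi_n$-class, the one-step inequality comes for free only for $\mathcal F_{n-1}$-measurable test functions, whereas your induction step applies it to $y\mapsto P_n^k(y,T)$, which is genuinely $\mathcal F_n$-measurable. So the claim you take as a black box is exactly where the work — and a careful shift of indices — must go; as written, the proof assumes the hard part. (The same wrinkle is present in the paper's own induction step, which applies the $\sim_{\epsilon_{m+1}}$ bound to points that are only known to be $\pi_m$-equivalent; your abstraction inherits it rather than resolving it.)
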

Two questions immediately arise:
does the sequence $({\mathrm{Str}}_n(P))_n$ have a limit (and in which sense)?
Moreover, since $d$ is defined as a pseudometric,
does this limit depend on the choice of $\mathrm{Str}$?

The extension of the concept of compatible projection given in \cite{GAEM,A06,A06b}
to our framework will enable us to understand better the answer to these
questions.
A measurable set $A \neq \emptyset$ of a measurable space $(X,
\mathcal X)$ is an $\mathcal X$-atom if it has no non-empty
measurable proper subset. No two distinct atoms intersect. If the
$\sigma$-field is countably generated, say by the sequence $\{A_n\}$
then the atoms of $\mathcal X$ are of the form $\cap_nC_n$ where
each $C_n$ is either $A_n$ or $X\setminus A_n$.
\begin{definition}
An equivalence relationship $\pi$ on a measurable set $(X,\mathcal X)$ is
\emph{measurable} (\emph{discrete}) if there exists a (discrete)
random variable $f:(X,\mathcal X)\to(\R,\mathcal B_\R)$
($\mathcal B_\R$ denotes the Borel $\sigma$-algebra),
such that
\[
x\,\pi\,y \qquad \Longleftrightarrow \qquad f(x) = f(y),
\]
and we denote it by $\pi=\pi_f$.
Let $(X,{\mathcal F},T,P)$ be a target problem.
A \emph{compatible projection} is a measurable equivalency $\pi_f$ such that
\begin{equation}\label{defn:comp_proj}
P(x,F) = P(y,F),\qquad \forall x\,\pi_f\,y, \forall
F\in\sigma(f).
\end{equation}
A compatible projection $\pi$ is said to
be \emph{optimal} if $\pi\supseteq\pi'$, for any other compatible projection
$\pi'$.
\end{definition}
\begin{remark}
This definition is well posed if
\[\pi_f=\pi_g \Longleftrightarrow \sigma(f)=\sigma(g).\]
Assumption~\eqref{eq:A0def} ensures that the definition of
measurable equivalency is indeed well posed. This assumption will be stated
and discussed in Section \ref{sec:A0}.
\end{remark}
\begin{theorem}\label{thm:CP+TP}
If $\pi=\pi_f$ is a compatible projection for the target problem
$(X,{\mathcal F},T,P)$, then there exists
a target problem $(X,\sigma(f),T,P_\pi)$.
such that $P_\pi(x,F) = P(x,F)$ for any
$F\in\sigma(f)$.
\end{theorem}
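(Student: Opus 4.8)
The plan is to take for $P_\pi$ simply the restriction of $P$ to $X\times\sigma(f)$ and then to check that this restriction is a genuine transition probability on $(X,\sigma(f))$; the whole weight of the argument sits in one measurability claim, and that claim is precisely where Assumption~\eqref{eq:A0def} is used. Concretely, since $\pi_f$ is a compatible projection attached to the target problem $(X,\mathcal F,T,P)$ we have $\sigma(f)\subseteq\mathcal F$ (so that $P(x,\cdot)$ is defined on $\sigma(f)$, as is implicit already in \eqref{defn:comp_proj}) and $T\in\sigma(f)$ (needed merely for $\mathrm{TrP}(X,\mathcal X,\sigma(f))$ to be meaningful). Set $P_\pi(x,F):=P(x,F)$ for all $x\in X$ and $F\in\sigma(f)$. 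Then the asserted identity $P_\pi(x,F)=P(x,F)$ on $\sigma(f)$ holds by construction, and it only remains to verify the two requirements in the definition of a transition probability on $(X,\sigma(f))$. The first is immediate: for fixed $x$, $P_\pi(x,\cdot)$ is the restriction of the probability measure $P(x,\cdot)$ to the sub-$\sigma$-algebra $\sigma(f)$, and the restriction of a probability measure to a sub-$\sigma$-algebra is again a probability measure.

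For the second requirement fix $F\in\sigma(f)$. Because $P\in\mathrm{TrP}(X,\mathcal X,\mathcal F)$, the map $x\mapsto P_\pi(x,F)=P(x,F)$ is $\mathcal F$-measurable, hence $\mathcal X$-measurable; the task is to upgrade this to $\sigma(f)$-measurability. Now $\sigma(f)$ is countably generated (for instance by $\{f^{-1}((-\infty,q]):q\in\mathbb Q\}$), and by the description of the atoms of a countably generated $\sigma$-algebra recalled above, the atoms of $\sigma(f)$ are exactly the fibres $f^{-1}(\{t\})$, i.e. the classes of $\pi_f$. The compatibility hypothesis \eqref{defn:comp_proj} says precisely that $x\mapsto P_\pi(x,F)$ is constant on each such atom. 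Assumption~\eqref{eq:A0def} now closes the gap: on a (weakly) Blackwell space, a real $\mathcal X$-measurable function that is constant on the atoms of a countably generated sub-$\sigma$-algebra $\mathcal G\subseteq\mathcal X$ is $\mathcal G$-measurable. (This is the substance of the well-posedness statement ``$\pi_f=\pi_g\Rightarrow\sigma(f)=\sigma(g)$'': apply it to $f$ and to a real Borel encoding $g$ of the pair $\bigl(f(\cdot),\,P_\pi(\cdot,F)\bigr)$, whose induced equivalence $\pi_g$ coincides with $\pi_f$, so that $\sigma(f)=\sigma(g)\supseteq\sigma\bigl(P_\pi(\cdot,F)\bigr)$.) Hence $x\mapsto P_\pi(x,F)$ is $\sigma(f)$-measurable, so $P_\pi\in\mathrm{TrP}(X,\mathcal X,\sigma(f))$ and $(X,\sigma(f),T,P_\pi)$ is the desired target problem.

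The only real obstacle is this last measurability step, and it is genuinely non-trivial: set-theoretically $P_\pi(\cdot,F)$ always factors through $f$, but in an arbitrary measurable space the factor map defined on the range of $f$ need not be Borel, in which case $P_\pi(\cdot,F)$ fails to be $\sigma(f)$-measurable and $P_\pi$ is not a transition probability on $(X,\sigma(f))$ at all. It is exactly to exclude this pathology that the Blackwell-type Assumption~\eqref{eq:A0def} is imposed. Everything else — identifying the atoms of $\sigma(f)$ with the $\pi_f$-classes, and observing that the restriction of a probability measure to a sub-$\sigma$-algebra is a probability measure — is routine.
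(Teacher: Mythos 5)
Your proposal is correct and follows essentially the same route as the paper: define $P_\pi$ as the restriction of $P$ to $X\times\sigma(f)$, observe that only the $\sigma(f)$-measurability of $x\mapsto P_\pi(x,F)$ is at stake, and deduce it from the compatibility condition \eqref{defn:comp_proj} (which says $\pi_f\subseteq\pi_{Y^F}$ for $Y^F=P_\pi(\cdot,F)$) via Assumption~\eqref{eq:A0def}. The paper phrases this last step as a one-line contradiction where you argue directly (with a detour through a Borel encoding of the pair $(f,Y^F)$), but the mathematical content is identical.
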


It is not said ``a priori'' that an optimal compatible projection
must exist. If it is the case, then this equivalence is
obviously unique.

\begin{theorem}\label{thm:maintb}
For any target problem $(X,\mathcal F,T,P)$, there exists a (unique) optimal
compatible projection $\pi$.
\end{theorem}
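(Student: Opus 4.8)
Uniqueness is immediate: two optimal compatible projections each contain the other, hence coincide, so the only issue is existence. My plan is to produce the optimal projection directly --- not as a limit of the approximating sequences of the following sections, but as the projection attached to a canonically smallest $\sigma$-algebra. First I would pass to $\sigma$-algebra language: by Assumption~\eqref{eq:A0def} (together with Theorem~\ref{thm:CP+TP}), a compatible projection for $(X,\mathcal F,T,P)$ is the same datum as a countably generated sub-$\sigma$-algebra $\mathcal G$ with $T\in\mathcal G\subseteq\mathcal F$ that is \emph{$P$-closed}, i.e.\ such that $P(\cdot,G)$ is $\mathcal G$-measurable for every $G\in\mathcal G$ --- under the hypothesis, ``measurable with respect to $\mathcal G=\sigma(f)$'' and ``constant on the atoms of $\mathcal G$'' coincide, and the latter is exactly~\eqref{defn:comp_proj}. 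Optimality of $\pi$ then becomes minimality of $\mathcal G$ for inclusion.

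So I would set $\mathcal G^{*}=\bigcap\{\mathcal G\subseteq\mathcal F:\ T\in\mathcal G,\ \mathcal G\ P\text{-closed}\}$, where at this stage the $\mathcal G$ are \emph{not} assumed countably generated. An arbitrary intersection of $P$-closed $\sigma$-algebras is again $P$-closed (if $G\in\mathcal G_i$ for all $i$, then $P(\cdot,G)$ is $\mathcal G_i$-measurable for all $i$, hence $(\bigcap_i\mathcal G_i)$-measurable), so $\mathcal G^{*}$ is itself the least $P$-closed $\sigma$-algebra in $\mathcal F$ containing $T$. If $\mathcal G^{*}$ turns out to be countably generated, the proof is complete: by Assumption~\eqref{eq:A0def} it is $\sigma(f^{*})$ for some measurable $f^{*}$, $\pi_{f^{*}}$ is a compatible projection by the equivalence above, and for every compatible $\pi_f$ the $\sigma$-algebra $\sigma(f)$ is $P$-closed and contains $T$, hence $\sigma(f)\supseteq\mathcal G^{*}$, i.e.\ $\pi_f\subseteq\pi_{f^{*}}$; so $\pi_{f^{*}}$ is optimal.

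To reach countable generation I would realise $\mathcal G^{*}$ as the stabilisation of the transfinite iteration $\mathcal G_{0}=\sigma(\{T\})$, $\mathcal G_{\alpha+1}=\sigma(\mathcal G_{\alpha}\cup\{P(\cdot,G):G\in\mathcal G_{\alpha}\})$, $\mathcal G_{\lambda}=\sigma(\bigcup_{\alpha<\lambda}\mathcal G_{\alpha})$ at limits. The key lemma is that a successor step preserves countable generation: if $\mathcal A_{0}$ is a countable algebra generating $\mathcal G_{\alpha}$, then $\{G\in\mathcal G_{\alpha}:\ P(\cdot,G)\ \text{is}\ \sigma(\mathcal G_{\alpha}\cup\{P(\cdot,A):A\in\mathcal A_{0}\})\text{-measurable}\}$ is a $\lambda$-system --- using $P(\cdot,X)=1$, $P(\cdot,B\setminus A)=P(\cdot,B)-P(\cdot,A)$ for $A\subseteq B$, and $P(\cdot,\bigcup_nG_n)=\lim_nP(\cdot,G_n)$ along increasing unions --- containing the $\pi$-system $\mathcal A_{0}$, hence all of $\mathcal G_{\alpha}$; so $\mathcal G_{\alpha+1}$ is generated by the countable family $\mathcal A_{0}\cup\{P(\cdot,A):A\in\mathcal A_{0}\}$, and $\mathcal G_{\alpha+1}\subseteq\mathcal F$ because $P(\cdot,G)$ is $\mathcal F$-measurable for $G\in\mathcal F$. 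Thus every $\mathcal G_{\alpha}$ with $\alpha$ countable is countably generated, lies in $\mathcal F$, and contains $T$; the chain is nondecreasing; and $\mathcal G_{\omega_{1}}:=\bigcup_{\alpha<\omega_{1}}\mathcal G_{\alpha}$ is a $P$-closed $\sigma$-algebra containing $T$ and contained in $\mathcal F$, so by minimality $\mathcal G_{\omega_{1}}=\mathcal G^{*}$, and $\mathcal G^{*}=\mathcal G_{\alpha^{*}}$ as soon as the chain becomes stationary at some countable $\alpha^{*}$.

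The single genuine obstacle is therefore to show that this ascending chain of countably generated sub-$\sigma$-algebras cannot increase strictly through all the countable ordinals --- equivalently, that $\mathcal G^{*}$ is countably generated. This is exactly where I expect the weak Blackwell hypothesis to be indispensable, and it is the reason the theorem is not a triviality: without such a hypothesis one runs into the very pathologies (enlarging a $\sigma$-algebra while losing information) recalled in the Introduction, and the construction may fail to land on a measurable equivalence. The argument here should invoke Assumption~\eqref{eq:A0def} to control the atoms of $\mathcal G^{*}$ --- comparing them with the atoms of a countably generated sub-$\sigma$-algebra and checking that the $P$-closure operation separates no pair of points that is not already separated at some countable stage --- while the rest (the $\lambda$-system lemma, the ordinal bookkeeping, and the passage back to projections via Theorem~\ref{thm:CP+TP}) is routine.
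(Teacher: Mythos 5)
Your reduction to $\sigma$-algebra language is sound: under Assumption~\eqref{eq:A0def}, ``$P(\cdot,F)$ constant on the atoms of $\sigma(f)$'' and ``$P(\cdot,F)$ is $\sigma(f)$-measurable'' do coincide, so a compatible projection is the same as a countably generated, $P$-closed $\mathcal G$ with $T\in\mathcal G\subseteq\mathcal F$, and optimality is minimality for inclusion. But the proof as written has a genuine gap, and it is exactly the one you flag yourself: you never establish that $\mathcal G^{*}$ is countably generated (equivalently, that your transfinite chain stabilises at a countable stage). Saying that ``this is where the weak Blackwell hypothesis should be indispensable'' and that one should ``control the atoms of $\mathcal G^{*}$'' is not an argument; without that step you have produced a minimal $P$-closed $\sigma$-algebra but not a compatible \emph{projection} in the paper's sense, so the theorem is not proved. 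The diagnosis is also slightly off: Assumption~\eqref{eq:A0def} is what you need for the translation between projections and $\sigma$-algebras (and for the order-reversal via Lemma~\ref{lem:ordering_1}); it is not what closes the chain. What closes the chain is your own $\lambda$-system lemma applied one more time, at stage $\omega$: the union $\bigcup_{n<\omega}\mathcal G_{n}$ is an algebra, hence a $\pi$-system, and $\{G\in\mathcal G_{\omega}:P(\cdot,G)\ \text{is}\ \mathcal G_{\omega}\text{-measurable}\}$ is a $\lambda$-system containing it (using $P(\cdot,X)=1$, subtraction, and continuity from below), so $\mathcal G_{\omega}=\sigma(\bigcup_{n}\mathcal G_{n})$ is already $P$-closed, countably generated, contains $T$, and sits inside $\mathcal F$; hence $\mathcal G^{*}=\mathcal G_{\omega}$ and no transfinite induction is needed. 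With that paragraph added your argument becomes a complete and genuinely different proof.

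For comparison, the paper does not build an intrinsic minimal $P$-closed $\sigma$-algebra at all. It takes the filtration $(\mathcal F_{n})_{n}$ produced by a target algorithm, shows via Theorem~\ref{thm:pi_infty} that $\pi_{\infty}=\cap_{n}\pi_{n}$ (i.e.\ $\vee_{n}\mathcal F_{n}$) is compatible --- this uses the $\epsilon_{m}\searrow 0$ estimate rather than a Dynkin argument from exact $P$-closedness --- and then proves minimality by induction: if $\psi_{g}$ is any compatible projection, then $\mathcal F_{n}\subseteq\sigma(g)$ for every $n$, because each refining map $h_{n}$ factors through the functions $k^{(n-1)}_{i}=P(\cdot,A^{(n-1)}_{i})$, which are $\sigma(g)$-measurable once $A^{(n-1)}_{i}\in\sigma(g)$. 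The paper's route has the advantage of identifying the optimum as the limit of the computable approximation scheme (which is then reused in Theorems~\ref{thm:mainta} and~\ref{thm:maintc}); your route, once repaired, has the advantage of characterising the optimum intrinsically, independently of any choice of $(\sim_{\epsilon_{n}})_{n}$, which makes the uniqueness statement of Corollary~\ref{cor:uniqueness} transparent.
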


To conclude the main results, let us first come back to the Lebesgue example.
The simple
function $f_n=\sum_i c_i I_{C_i}$ are chosen so that $\sigma(C_n)$ increases
to $\sigma(f)$
and $f_n(x) \to f(x)$. The following theorem guarantees these two facts by showing
the ``convergence'' of any strategy to the optimal problem.
\begin{theorem}\label{thm:maintc}
Let ${\mathrm{Str}}_n (X,{\mathcal F},T,P) = (X,{\mathcal F}_n,T,P_n)$,
with $\mathrm{Str}$ target algorithm and let
$\pi$ be the optimal compatible projection associated to the target problem
$(X,{\mathcal F},T,P)$. Then
\begin{itemize}
  \item ${\mathcal F}_n \subseteq {\mathcal F}_{n+1}$ for any $n$, and
  $ \vee_n {\mathcal F}_n = {\mathcal F}_\pi$;
  \item $\lim_n P_n (x,F) = P_\pi(x,F)$, for any $F\in \cup_m {\mathcal F}_m$.
\end{itemize}
\end{theorem}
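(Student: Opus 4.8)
The plan is to prove the two bullets in the non‑circular order: nestedness, then $\vee_n\mathcal F_n\subseteq\mathcal F_\pi$, then the convergence of the $P_n$, and finally $\mathcal F_\pi\subseteq\vee_n\mathcal F_n$. Throughout I will use the defining features of a target algorithm as a black box: each $\mathcal F_n$ is generated by an at most countable partition, the sequence is nested by construction, the $(n{+}1)$‑st refinement splits the cells of $\mathcal F_n$ only according to the values $P(\cdot,C)$ on cells $C$ of $\mathcal F_n$, rounded at a precision $\delta_n$ with $\delta_n\downarrow0$, and $P_n$ is assembled from exactly these rounded data, so in particular $P_n(x,\cdot)$ is constant on the atoms of $\mathcal F_n$.

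Nestedness $\mathcal F_n\subseteq\mathcal F_{n+1}$ is definitional. To see $\mathcal F_n\subseteq\mathcal F_\pi$ for every $n$, I argue by induction that every atom of $\mathcal F_n$ is a union of $\pi$‑classes. The base case holds because the first partition is generated by $T$ together with the rounded values $P(\cdot,T)$, and $T\in\mathcal F_\pi$ forces $P(\cdot,T)$ to be $\mathcal F_\pi$‑measurable (apply \eqref{defn:comp_proj} with $F=T$, using that $\pi$ is compatible and measurable by Theorem~\ref{thm:maintb}). For the inductive step, the cells $C$ driving the refinement lie in $\mathcal F_\pi$ by hypothesis, and \eqref{defn:comp_proj} gives $P(x,C)=P(y,C)$ whenever $x\,\pi\,y$, so the splitting cannot separate two $\pi$‑equivalent points; hence $\mathcal F_{n+1}\subseteq\mathcal F_\pi$. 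Consequently $\mathcal G:=\vee_n\mathcal F_n\subseteq\mathcal F_\pi$, and in particular $\cup_m\mathcal F_m\subseteq\mathcal F_\pi$.

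For the second bullet, fix $F\in\mathcal F_m$ and take $n\ge m$, so that $F$ is a countable union of cells of $\mathcal F_n$. By the construction, $P_n(x,F)$ differs from $P(x,F)$ only through the $\delta_n$‑rounding and through replacing $x$ by a representative of its $\mathcal F_n$‑cell, on which the relevant values of $P(\cdot,\cdot)$ agree up to $\delta_n$ by the splitting rule itself; controlling the accumulated error with the $\beta$‑weighted summation already used in Theorem~\ref{thm:mainta}, one bounds $|P_n(x,F)-P(x,F)|$ by a sequence tending to $0$, so $\lim_n P_n(x,F)=P(x,F)$. Since $F\in\cup_m\mathcal F_m\subseteq\mathcal F_\pi$ and $\pi$ is a compatible projection, Theorem~\ref{thm:CP+TP} yields $(X,\mathcal F_\pi,T,P_\pi)$ with $P_\pi(x,F)=P(x,F)$; hence $\lim_n P_n(x,F)=P_\pi(x,F)$, which is the second bullet. (A martingale argument for $P_n(x,\cdot)$ against $\mathcal F_n$ is tempting, but identifying its limit brings back exactly the inclusion proved next, so the direct estimate is preferable.)

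It remains to prove $\mathcal F_\pi\subseteq\mathcal G$, and here I invoke the optimality of $\pi$ from Theorem~\ref{thm:maintb}: it is enough to produce a compatible projection whose $\sigma$‑algebra is $\mathcal G$, for then $\pi_\mathcal G\subseteq\pi$, i.e.\ $\mathcal F_\pi\subseteq\mathcal G$, which together with the previous paragraph gives $\mathcal G=\mathcal F_\pi$. As $\mathcal G$ is countably generated, under Assumption~\eqref{eq:A0def} (Section~\ref{sec:A0}) it equals $\sigma(g)$ for a real measurable $g$ with $\pi_g=\pi_\mathcal G$, whose atoms are the intersections $\cap_n A_n$ of atoms $A_n$ of $\mathcal F_n$. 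By the second bullet, $Q(x,F):=\lim_n P_n(x,F)=P(x,F)$ is well defined for $F\in\cup_m\mathcal F_m$, is constant on the atoms of $\mathcal G$ (a limit of functions constant on those atoms), and — after checking that $Q$ extends from the algebra $\cup_m\mathcal F_m$ to a genuine transition probability on $(X,\mathcal G)$, with measurability in $x$ and $\sigma$‑additivity passing to the limit by a monotone‑class argument — satisfies \eqref{defn:comp_proj} for $\pi_g$. This last step I expect to be the main obstacle: one must control countable additivity in the limit while the cells proliferate and the roundings accumulate, and preserve $\mathcal G$‑measurability of $x\mapsto Q(x,F)$. A secondary delicate point, already in the inductive ``$\subseteq$'' argument, is to be sure that a target algorithm never refines using information outside the target‑relevant $\sigma$‑algebra — this is precisely where the hypothesis that $\mathrm{Str}$ is a target algorithm, and not merely a refining strategy with $d(P,P_n)\to0$, is used.
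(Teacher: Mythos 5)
Your decomposition is sound and you have located the crux correctly, but the route differs from the paper's in one place and is genuinely incomplete in another. For the second bullet the paper does not use a direct error estimate: by \eqref{eq:def_of_P_n}, $(P_n(\cdot,F))_{n\ge m}$ is a $\mu$-martingale for the filtration $(\mathcal F_n)_{n\ge m}$, so $P_n(x,F)\to\EE_\mu[Y^F\mid\vee_n\mathcal F_n](x)=P_\infty(x,F)=P_\pi(x,F)$ for $\mu$-a.e.\ $x$, and ``a.e.''\ upgrades to ``everywhere'' because the only $\mu$-null set of $\cup_m\mathcal F_m$ is $\varnothing$. Your direct estimate also works and is arguably cleaner: for $z\in[x]_n$ and $F\in\mathcal F_m$ with $n>m$, $F$ is a countable union of atoms $A_i^{(n-1)}$, so \eqref{eq:projectionsta} gives $\sum_i\vert P(z,A_i^{(n-1)})-P(x,A_i^{(n-1)})\vert<\epsilon_n$ and hence $\vert P_n(x,F)-P(x,F)\vert\le\epsilon_n$ by \eqref{eq:def_of_P_n2}; note that no $\beta$-weighted summation is needed for a one-step probability (that device belongs to the proof of Theorem~\ref{thm:mainta}, where iterates $P^n$ are compared). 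Your induction giving $\mathcal F_n\subseteq\mathcal F_\pi$ is also fine and parallels the induction in the paper's proof of Theorem~\ref{thm:maintb}, though the paper argues there via $\sigma(g)$-measurability of $P_g(\cdot,A_i^{(n-1)})$ rather than via atoms plus \eqref{eq:A0def}.

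The gap is the inclusion $\mathcal F_\pi\subseteq\mathcal G$ with $\mathcal G=\vee_n\mathcal F_n$, which you correctly reduce to showing that $\pi_{\mathcal G}$ is a compatible projection and then defer as ``the main obstacle.'' This is precisely Theorem~\ref{thm:pi_infty} of the paper, which is available to be cited; as written, your proof of the first bullet is not complete without it. Moreover your stated worries about it are partly misplaced: since each $\mathcal F_n\subseteq\mathcal F$ (Lemma~\ref{lem:contmeas}), also $\mathcal G\subseteq\mathcal F$, and the candidate kernel is simply the restriction $P(x,\cdot)\vert_{\mathcal G}$ --- there is no limit of measures to take and countable additivity is automatic. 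The only thing that must be proved is that $x\mapsto P(x,F)$ is $\mathcal G$-measurable for every $F\in\mathcal G$, and this does require an argument: the paper shows, for $F\in\mathcal F_n$ and $H=\{x\colon P(x,F)\le t\}$, that $H=\cap_{m>n}\pi_m^{-1}\pi_m(H)$ (if $y\,\pi_m\,x_m$ with $x_m\in H$ then $P(y,F)\le P(x_m,F)+\epsilon_m\le t+\epsilon_m$ and $\epsilon_m\searrow 0$), and then passes from the algebra $\cup_n\mathcal F_n$ to $\mathcal G$ by a monotone class argument. With that step supplied (or with Theorem~\ref{thm:pi_infty} invoked), your optimality argument $\pi\supseteq\pi_{\mathcal G}\Rightarrow\mathcal F_\pi\subseteq\mathcal G$ via \eqref{eq:A0def} closes the proof as you intend.
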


\begin{remark}[The topology $ \mathrm{Top} $] In
Theorems~\ref{thm:mainta}--\ref{thm:maintc}, we have proved the
convergence of $
({P}_n)_n $ to $P_\pi$ with respect to the pseudometric $d$.
The pseudometric topology $\mathrm{Top} $ is the topology induced
by the open balls $B_r(P)=\{Q\in {\mathbb{TP}}_X\colon d(P,Q)<r \}$,
which form a basis for the topology.
Accordingly, the previous theorems may be reread in
terms of convergence of $P_n$ to
$P$ on the topological space $({\mathbb{TP}}_X,\mathrm{Top})$.
\end{remark}

\subsection{Connection with weak convergence}
Given a strategy $(X,{\mathcal F}_n,T,P_n)_n$,
if we want to show a sort of weak convergence of
$P_n(x,\cdot)$ to $P(x,\cdot)$, for any $x$, we face the two following problems
\begin{itemize}
    \item each $P_n(x,\cdot)$ is defined on a different
    domain (namely, on $\mathcal F_n$);
    \item we did not have required a topology on $X$.
\end{itemize}
First, we want to introduce a new definition of probability convergence
which takes into account the first restriction. The idea is given in
the following example.
\begin{example}\label{exa:dyadic}
Let $\mathcal F_n =
\sigma(\{(i2^{-n},(i+1)2^{-n}],i=0,\ldots,2^n-1\})$ be the
$\sigma$-algebra on $(0,1]$ generated by the dyadic subdivision.
Suppose we know that $\nu_n:\mathcal F_n\to [0,1]$ is the unique
probability on $\mathcal F_n$ s.t.\ for any $i$,
$\nu_n((i2^{-n},(i+1)2^{-n}])=2^{-n}$. Even if $\nu_n$ is not
defined on the Borel sets of $(0,1]$, it is clear that in ``some''
sense, it must happen that $\nu_n\to\nu_*$, where $\nu_*$ is the
Lebesgue measure on the Borel sets of $(0,1]$. Note that the
cumulative function of ${\nu}_n$ is not defined, and therefore a
standard weak convergence cannot be verified.

In fact, we know that
\begin{equation}\label{eq:exadyadic}
\nu_n \Big( \big( -\infty,  \frac{i}{2^n} \big] \Big) = \nu_n
\Big( \big( 0,  \frac{i}{2^n} \big] \Big) = \frac{i}{2^n},
\end{equation}
i.e., in this case, as $n\to\infty$, we can determine  the
cumulative function in a dense subset. This fact allows to hope that
$\nu_n\to\nu_*$ in a particular sense.
\end{example}

\begin{definition}
Let $(X,\mathcal X,(\mathcal X_n)_n)$ be a filtered space, and set
${\mathcal X}_{\infty}=\vee_n {\mathcal X}_n$. Let $\nu_n:{\mathcal
X}_n \rightarrow [0,1]$, $n\geq 1$ and $\nu_{\infty} :{\mathcal
X}_{\infty} \rightarrow [0,1]$ be probability measures. We say that
$\nu_n$ \emph{converges totally to $\nu_{\infty}$ on the topological
space $(X,\tau)$} as $n$ tends to infinity, if $\bar{\nu}_n
\mathop{\longrightarrow}\limits^{w}_\tau \nu_{\infty}$ (converges in
weak sense on $(X,\tau)$), for any $\bar{\nu}_n :{\mathcal
X}_{\infty} \rightarrow [0,1]$, such that
${\bar{\nu}_n}{}_{\vert_{{\mathcal X}_n}}=\nu_n$. We write $\nu_n
\mathop{\longrightarrow}\limits^{\mathrm{tot}}_\tau \nu_{\infty}$.
\end{definition}

Going back to the example, it is simple to check that $\nu_n
\mathop{\longrightarrow}\limits^{\mathrm{tot}}_{\tau(0,1]} \nu_*$,
where $\nu_n,\nu_*$ are given in Example~\ref{exa:dyadic} and
$\tau(0,1]$ is the standard topology on $(0,1]$. In fact, let
$(\bar{\nu}_n)_n$ be any extension of $({\nu}_n)_n$ to the Borel
sets of $(0,1]$. For any $t\in (0,1)$, we have by
\eqref{eq:exadyadic} that
\[
t-\frac{1}{2^n} \leq F_{\bar{\nu}_n} (t) \leq t+\frac{1}{2^n} ,
\]
where $F_{\bar{\nu}_n}$ is the cumulative function of
$\bar{\nu}_n$, which implies the weak convergence of $\bar{\nu}_n$
to $\nu_*$ and, therefore, ${\nu}_n
\mathop{\longrightarrow}\limits^{\mathrm{tot}}_{\tau(0,1]} \nu_*$.

For what concerns the topology on $X$, we will define the topological space  $(X,\varrho_P)$
induced by the pseudometric $d_P$ associated to the target problem
$(X,{\mathcal F},T,P)$, and the pseudometric $d$. In this way
$\varrho_P$ is defined only with the data of the problem.
One may ask: is this topology too poor?
The answer is no, since it is defined by
the interesting pseudometric $d_P$.
In fact, $d_P(x,y)<\epsilon$
 means that $x$ and $y$ play ``almost the same role'' with
respect to $T$. A direct algorithm which takes $d_P$ into
account needs the computation of $P^n$ at each step. In any case, even if $d_P$
may not be computable, it defines a nontrivial interesting topology
$\varrho_P$ on $X$. As expected, we have the following theorem.
\begin{theorem}\label{prop:Top}
Let ${\mathrm{Str}}_n (X,{\mathcal F},T,P) = (X,{\mathcal F}_n,T,P_n)$,
with $\mathrm{Str}$ target algorithm. Then
\[
P_n
\mathop{\longrightarrow}\limits^{\mathrm{tot}}_{\varrho_P} P.
\]
\end{theorem}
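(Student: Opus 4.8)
The plan is to collapse the total convergence to a single weak convergence on a compact metric space. Recall that $d_P$ factors through the map $\Phi:=\big(P^m(\cdot,T)\big)_{m\ge1}\colon X\to K:=[0,1]^{\NN}$, in the sense that $d_P(z,w)=\sum_{m\ge1}\beta^m\big|P^m(z,T)-P^m(w,T)\big|=\rho\big(\Phi(z),\Phi(w)\big)$, where $\rho(a,b)=\sum_{m\ge1}\beta^m|a_m-b_m|$ is a metric generating the (compact) product topology of $K$. Thus $\varrho_P$ is exactly the initial topology on $X$ induced by $\Phi$; since $K$ is second countable and each $P^m(\cdot,T)$ is $\mathcal F_\pi$-measurable (established in the next paragraph), $\Phi$ is $\mathcal F_\pi$-measurable and every $\varrho_P$-open set lies in $\mathcal F_\pi$, so every probability on $(X,\mathcal F_\pi)$ restricts to a Borel law on $(X,\varrho_P)$. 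Fix $x\in X$ and let $\bar P_n(x,\cdot)$ be an arbitrary extension of $P_n(x,\cdot)$ from $\mathcal F_n$ to $\mathcal F_\infty=\vee_n\mathcal F_n=\mathcal F_\pi$ (Theorem~\ref{thm:maintc}), and recall $P_\pi(x,\cdot)=P(x,\cdot)|_{\mathcal F_\pi}$ (Theorem~\ref{thm:CP+TP}). Since weak convergence of the $\Phi$-pushforwards on $K$ implies weak convergence on $(X,\varrho_P)$ (see the last paragraph), it suffices to prove $\Phi_*\bar P_n(x,\cdot)\to\Phi_*P(x,\cdot)$ weakly on $K$ for every such $x$ and every such extension. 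By Stone--Weierstrass on the compact $K$, it is enough to test against functions of finitely many coordinates, i.e.\ to show, for all $k$, all $m_1,\dots,m_k$ and all $\psi\in C([0,1]^k)$,
\[
\int_X\psi\big(P^{m_1}(y,T),\dots,P^{m_k}(y,T)\big)\,\bar P_n(x,dy)\longrightarrow\int_X\psi\big(P^{m_1}(y,T),\dots,P^{m_k}(y,T)\big)\,P(x,dy).
\]

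The decisive observation is that the integrand $u:=\psi\big(P^{m_1}(\cdot,T),\dots,P^{m_k}(\cdot,T)\big)$, although $\mathcal F_\pi$-measurable, is in general measurable at no finite stage $\mathcal F_n$, and yet is a \emph{uniform} limit of functions that are. By Theorem~\ref{thm:mainta}, $\beta^m\big|P^m_n(z,T)-P^m(z,T)\big|\le d(P,P_n)\to0$ uniformly in $z$, so $P^m_n(\cdot,T)\to P^m(\cdot,T)$ uniformly for each $m$; as $P^m_n(\cdot,T)$ is $\mathcal F_n$-measurable (because $P_n\in\mathrm{TrP}(X,\mathcal X,\mathcal F_n)$ and $T\in\mathcal F_n$), this also shows each $P^m(\cdot,T)$ is $\mathcal F_\pi$-measurable. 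Setting $u_n:=\psi\big(P^{m_1}_n(\cdot,T),\dots,P^{m_k}_n(\cdot,T)\big)$, the uniform continuity of $\psi$ on the compact $[0,1]^k$ gives that each $u_n$ is $\mathcal F_n$-measurable and $\|u_n-u\|_\infty\to0$. In particular $\int u_n\,d\bar P_n(x,\cdot)=\int u_n\,dP_n(x,\cdot)$, because $u_n$ is $\mathcal F_n$-measurable and $\bar P_n(x,\cdot)$ extends $P_n(x,\cdot)$.

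It remains to observe that $\int u\,dP_n(x,\cdot)\to\int u\,dP_\pi(x,\cdot)$ for every bounded $u$ in the uniform closure $\mathcal H$ of $\bigcup_k L^\infty(\mathcal F_k)$: this holds for $\mathcal F_k$-simple functions directly by Theorem~\ref{thm:maintc} (which yields $P_n(x,F)\to P_\pi(x,F)$ for $F\in\bigcup_m\mathcal F_m$), hence for $L^\infty(\mathcal F_k)$ and then for $\mathcal H$ by two successive uniform approximations; and $u\in\mathcal H$ by the previous paragraph. Combining, and using $\int u\,dP(x,\cdot)=\int u\,dP_\pi(x,\cdot)$ (as $u$ is $\mathcal F_\pi$-measurable) together with $\int u_n\,d\bar P_n(x,\cdot)=\int u_n\,dP_n(x,\cdot)$,
\[
\Big|\int u\,d\bar P_n(x,\cdot)-\int u\,dP(x,\cdot)\Big|\le 2\|u-u_n\|_\infty+\Big|\int u\,dP_n(x,\cdot)-\int u\,dP_\pi(x,\cdot)\Big|\longrightarrow0.
\]
This establishes the displayed finite-dimensional convergence, hence $\Phi_*\bar P_n(x,\cdot)\to\Phi_*P(x,\cdot)$ weakly on $K$, hence $P_n\mathop{\longrightarrow}\limits^{\mathrm{tot}}_{\varrho_P}P$.

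I expect the delicate point to be the reduction carried out in the first paragraph: one must verify that weak convergence on the pseudometric space $(X,\varrho_P)$ is faithfully recorded by weak convergence of the $\Phi$-pushforwards on the compact $K$. This rests on the facts that every bounded $\varrho_P$-continuous function is constant on pairs at $d_P$-distance zero and hence factors through $\Phi$, that $\Phi$ is an open surjection of $(X,\varrho_P)$ onto $\Phi(X)$, and that restricting a weakly convergent sequence of laws on $K$ to the common full-measure set $\Phi(X)$ (which is universally measurable, since we work in a Blackwell space) preserves weak convergence, by the Portmanteau theorem. By contrast the analytic core is short: once one notices that the test functions $P^m(\cdot,T)$ are uniform limits of $\mathcal F_n$-measurable functions, everything reduces to a direct combination of Theorems~\ref{thm:mainta} and~\ref{thm:maintc}.
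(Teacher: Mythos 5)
Your argument is essentially correct, but it follows a genuinely different route from the paper's. The paper first proves an auxiliary result (its Theorem~\ref{thm:tot_conv}): total convergence with respect to the finer topology $\tau_{\mathrm{Str}}$ generated by declaring every set of $\cup_n\mathcal F_n$ closed, via the closed-set portmanteau bound $\bar P_n(x,C)\leq P_n(x,C_{n-1})$ for $C=\cap_nC_n$; it then deduces Theorem~\ref{prop:Top} by showing that $\varrho_P\subseteq\tau_{\mathrm{Str}}$, i.e.\ that the complement $F=\{y\colon d_P(y,x)\geq r\}$ of each $d_P$-ball satisfies $F=\cap_m[[F]]_{\tau_{\mathrm{Str}_m}}$, which is proved by picking $y_m\in F$ with $y\,\pi_m\,y_m$ and estimating $d_P(y,y_m)$ via the $\mathcal F_m$-measurability of $P_m^n(\cdot,T)$ and Theorem~\ref{thm:mainta}. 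You instead push everything forward to the compact cube $K=[0,1]^{\NN}$ through $\Phi=(P^m(\cdot,T))_m$, identify $\varrho_P$ as the initial topology of $\Phi$, and verify weak convergence of the pushforwards by testing against cylinder functions, using the uniform approximation $P^m_n(\cdot,T)\to P^m(\cdot,T)$ (again Theorem~\ref{thm:mainta}) together with $P_n(x,F)\to P_\pi(x,F)$ on $\cup_m\mathcal F_m$ (Theorem~\ref{thm:maintc}). Both proofs thus rest on the same two ingredients, but the paper's route yields the stronger intermediate statement (total convergence for the finer topologies $\tau_{\mathrm{Str}}$ and hence for $\tau_P$), whereas yours avoids the auxiliary topologies entirely and trades the topology comparison for a transfer of weak convergence along $\Phi$. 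Two small points to tidy up: the expression $\int u\,dP_n(x,\cdot)$ in your final display is not literally defined for $u\in\mathcal H$ not $\mathcal F_n$-measurable (the three-epsilon argument you describe, inserting $u_k$ with $k\leq n$ fixed, is the correct formulation and should replace it); and the appeal to universal measurability of $\Phi(X)$ ``since we work in a Blackwell space'' is unjustified but also unnecessary --- it suffices that $\Phi(X)$ has full outer measure under every pushforward $\Phi_*\bar P_n(x,\cdot)$, which is automatic, so the trace measures and the portmanteau restriction argument go through without it.
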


\section{The target algorithm}
In this section, we introduce the core of the approximating target
problem, namely a set of strategies $\mathrm{Str}$ which solves the
target problem.


Given a measurable space $(X,\mathcal X)$ and a target
problem $(X,\mathcal F,T,P)$,
the target algorithm is built in the spirit of the exact one given
in \cite{A06,A06b}, which starts from the largest classes $T$ and $X
\setminus T$ and then reaches the optimal classes according to a
backward construction.

The target algorithm defines a strategy $\mathrm{Str} = (\mathrm{Str}_n)_n$,
where
\[
\mathrm{Str}_n(X,\mathcal F,T,P) = (X,\mathcal F_n,T,P_n),
\]
and it consists of three steps:
\begin{enumerate}
  \item the choice of a sequence $(\sim_{\epsilon_n})_n$ of equivalences on
  the simplex on the unit ball of $\ell_1$ with $\epsilon_n\to 0$;
  \item the definition of a filtration $(\mathcal F_n)_n$ based on
  $(\sim_{\epsilon_n})_n$ where each $\mathcal F_n$ is generated by
  a countable partition of $X$;
  \item the choice of a suitable measure $\mu$
  and the definition of $(P_n)_n$.
\end{enumerate}

\subsection{Preliminary results on measurability and equivalency, and
the choice of $(\sim_{\epsilon_n})_n$}
Associated to each
countably generated sub $\sigma$-algebra $\mathcal A\subseteq
\mathcal X$, we define the equivalence relationship $\pi_{\mathcal
A}$ induced by the atoms of $\mathcal A$:
\[
x\,\pi_{\mathcal A}\,y \iff
[x]_{\mathcal A} :=\cap \{A\in\mathcal A\colon x\in A\}=
\cap \{A\in\mathcal A\colon y\in A\}=:[y]_{\mathcal A}.
\]
Thus, if $(\mathcal A_n)_n$ is a sequence of countably
generated $\sigma$-algebras, then
\begin{equation}\label{eq:vee_and_cap}
\pi_{\vee_n\mathcal A_n} = \cap_n \pi_{\mathcal A_n} .
\end{equation}

Now, the atoms of the $\sigma$-algebra $\mathcal F$ of
each simple target problem $(X,\mathcal F,T,Q)$ are at most countable,
by definition. Then $Q$ may be represented as a transition
matrix on the state set $\NN$. Each row of $Q$ is a distribution
probability on $\mathbb N$ (i.e. a sequence $(p_n)_n$ in the simplex $S$ of
$\ell_1$).
The first step of the target algorithm is to equip $S$ with the
$\ell_1$--norm and then to define an $\epsilon$-equivalence on $S$.

\null

We will alternatively use both the discrete equivalencies
and the countable measurable partitions, as a consequence of the
following result, whose proof is left to appendix.
\begin{lemma}\label{lem:discrete_measur}
Given a measurable space $(X,\mathcal X)$,
there exists a natural bijection between the set of discrete equivalencies
on $X$ and the set of the countable measurable partitions of it.
\end{lemma}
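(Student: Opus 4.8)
The plan is to construct the bijection explicitly in both directions and check that the two constructions are mutually inverse. Given a discrete equivalency $\pi = \pi_f$ with $f : (X,\mathcal X) \to (\R, \mathcal B_\R)$ a discrete random variable, its range $f(X)$ is an at most countable subset $\{c_i\}_i$ of $\R$, and each preimage $C_i := f^{-1}(\{c_i\})$ is measurable (since singletons are Borel) and non-empty. The sets $(C_i)_i$ are pairwise disjoint and cover $X$, so they form an at most countable measurable partition, which I take as the image of $\pi_f$ under the map. Conversely, given an at most countable measurable partition $(C_i)_i$ of $X$, pick any injection $i \mapsto c_i \in \R$ (e.g.\ $c_i = i$ if we index by $\NN$, or any enumeration) and set $f = \sum_i c_i I_{C_i}$; this is a discrete measurable function because the partition is countable and each $C_i \in \mathcal X$, and it induces the discrete equivalency $\pi_f$ whose classes are exactly the $C_i$.

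The key point to verify is well-definedness and bijectivity. For well-definedness of the forward map: if $\pi_f = \pi_g$ for two discrete random variables, then $f(x) = f(y) \iff g(x) = g(y)$, so $f$ and $g$ induce the same partition into level sets — the concrete values $c_i$ are irrelevant, only the fibers matter. Hence the forward map factors through the equivalency and lands in partitions. For injectivity: if two discrete equivalencies $\pi_f, \pi_g$ give the same partition, then $f(x) = f(y) \iff$ $x,y$ in the same block $\iff g(x) = g(y)$, so $\pi_f = \pi_g$. For surjectivity: every at most countable measurable partition arises from the $f = \sum_i c_i I_{C_i}$ construction above. Finally, composing the two maps in either order returns the identity — starting from a partition and building $f$, the level sets of $f$ are the original blocks; starting from an equivalency $\pi_f$, passing to its partition and back to $\sum_i c_i I_{C_i}$ gives an equivalency with the same classes, hence equal to $\pi_f$.

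I expect the only mild subtlety — and the step to state carefully rather than the ``hard part'' — is the bookkeeping around the degenerate cases: a block could be the whole space (the trivial partition, corresponding to a constant $f$), and one must allow finite as well as countably infinite partitions, which is why the statement says ``at most countable''. Measurability of the pieces in the forward direction is immediate because $\mathcal B_\R$ contains all singletons; measurability of $f$ in the backward direction is immediate because a countable sum of measurable indicator functions with disjoint supports is measurable. No Blackwell-type assumption is needed here: this lemma is purely set-theoretic/measure-theoretic, and Assumption~\eqref{eq:A0def} will only be invoked later to pass between \emph{discrete} equivalencies and general \emph{measurable} ones. So the proof is essentially the explicit construction plus a short check, which is why it is relegated to the appendix.
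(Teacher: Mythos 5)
Your proof is correct and follows essentially the same route as the paper's: map a discrete equivalency $\pi_f$ to the partition into the (measurable, at most countable) level sets of $f$, and conversely encode a countable measurable partition $(A_n)_n$ by the function $f(x)=n$ for $x\in A_n$. The paper's appendix proof is just a terser version of this; your extra checks of well-definedness and mutual inversion are harmless elaborations of the same construction.
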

Let $B_{\ell_1}(0,1)$ be the unit ball in $\ell_1$
and $S=\{x \geq 0\} \cap B_{\ell_1}(0,1)$
be the simplex on $\ell_1$. Let $\Omega_n=[0,1]$, for any $n$,
and $\tau$ be the standard topology on $[0,1]$. Denote by $\mathcal
B_{[0,1]}$ the Borel $\sigma$-algebra on $[0,1]$ generated by
$\tau$. We look at $S$ as a subset of $\Pi_{n=1}^{\infty}\Omega_n$
so that the Borel $\sigma$-algebra $\mathcal B_S$ induced on $S$ is
$\bigotimes_{n=1}^{\infty}\mathcal B_{[0,1]} \cap S$.
\begin{definition}
$\sim_{\epsilon}$ is an \emph{$\epsilon$-equivalence on $S$} if
it is a discrete equivalence on $(S,\mathcal B_S)$
and $\|p-q\|_1<\epsilon$ whenever $p \sim_{\epsilon} q$.
\end{definition}
\begin{remark}
The choice of $\ell_1$--norm on $S$ is
linked to the total variation distance between probability measures.
The total variation distance  between two probability measures $P$
and $Q$ is defined by $d_{TV}(P,Q)=\sup_{A \in \Omega}\vert
P(A)-Q(A)\vert$. Now the total variation of a measure $\mu$ is $\|
\mu \|(\Omega)=\sup \sum_i \vert \mu(A_i) \vert$, where the supremum
is taken over all the possible partitions of $\Omega$. As
$(P-Q)(\Omega)=0$, we have that $d_{TV}(P,Q)=\frac{1}{2}\| P-Q \|$,
see \cite{Billingsley:book}. To each $p\in S$ corresponds the
probability measure $P$ on $\mathbb N$ with $P({i}) = p_i$
 (and viceversa).
Therefore, since $\|p-q\|_1= \|P-Q\|= 2 d_{TV}(P,Q)$, we have
\[
p \sim_{\epsilon} q \Longrightarrow d_{TV}(P,Q) < \epsilon/2
\]
\end{remark}
\begin{example}\label{exa:e-cut}
Define the $\epsilon$-cut as follows. $p \sim_{\epsilon} q \iff
\left\lfloor \frac{p_n}{\epsilon2^{-n}} \right\rfloor= \left\lfloor
\frac{q_n}{\epsilon2^{-n}} \right\rfloor, \forall n$, where $\lfloor
x\rfloor$ denotes the entire part of $x$. Then
$\sim_{\epsilon}$ is an
$\epsilon$-equivalence on $S$. Indeed,
\begin{itemize}
\item $S/\sim_{\epsilon}$ is at most countable (since we divide
each $[0,1]$ into classes of length $\epsilon2^{-n}$).
\item
For any $ p \in S $
 \[
[p] =\{q \in S\colon \pi_{\sim_{\epsilon}}(q)
=\pi_{\sim_{\epsilon}}(p) \} = \prod_{n} \bigg[ \frac{\Big\lfloor
\frac{2^np_n}{\epsilon} \Big\rfloor\epsilon}{2^{n}} ,
\frac{\Big(\Big\lfloor \frac{2^np_n}{\epsilon}
\Big\rfloor+1\Big)\epsilon}{2^{n}} \bigg) \,
 {{\bigcap S}}
\] is measurable with respect to ${\mathcal B}_S.$
 \item
$\forall p \sim_{\epsilon}q$,
\[
\|p-q\|_1 \leq \sum_n \epsilon2^{-n} = \epsilon .
\]
\end{itemize}
\end{example}

\subsection{The choice of $(\mathcal F_n)_n$}

Given a sequence
$(\sim_{\epsilon_n})_{n \in\mathbb N}$ of $\epsilon$-equivalences on
$S$, we define the choice of $(\mathcal F_n)_n$ inductively.
This algorithm is a good candidate to be a
strategy for the approximating problem we are facing and it is based
on this idea: consider the
equivalence classes given by ${\mathcal F}_{n-1}$ and divide them again
according to the following rule. Starting from any two points in the
same class, we check whether the probabilities to attain any other
${\mathcal F}_{n-1}$-classes are $\epsilon$-the same. Mathematically speaking:

Step $0$ : $\mathcal F_0 = \sigma(T)=\{\varnothing,T, X \setminus T,X\}$

Step $n$ :
${\mathcal F}_n$ is based on the equivalence ${\mathcal F}_{n-1}$ and
on $\sim_{\epsilon_n}$, inductively. ${\mathcal F}_{n-1}$ is
generated by a countable partition of $X$, say $(A^{(n-1)}_i)_i$.
We define, for any couple $(x,y)\in
X^2$,
\begin{equation}\label{eq:projectionsta}
(x \pi_{n}y) \iff (x \pi_{n-1}y)
\wedge \Big(\big(P(x,A^{(n-1)}_i)_i\big)
\sim_{\epsilon_n}\big(P(y,A^{(n-1)}_i)_i\big)\Big)  .
\end{equation}
The following Lemma~\ref{lem:contmeas} shows that $\pi_n$ is
a discrete equivalency on $(X,\mathcal X)$,
and therefore it defines ${\mathcal F}_{n}=\sigma(X/\pi_n)$ as
generated by a countable partitions of $X$.
%
%

\begin{remark}
In applications, $(\mathcal F_n)_n$ must be finitely generated. This
is not a big restriction. In fact one can prove inductively that this
is always the case if the projection  $\sim_{\epsilon_n}$ divides
each component of $S$ into a finite number of subsets, as in
 Example~\ref{exa:e-cut}. The choice of the ``optimal''
sequence $(\sim_{\epsilon_n})_n$ is not the scope of this work.
We only note that the definition of $\sim_{\epsilon}$ can be relaxed
and the choice of the sequence $(\sim_{\epsilon_n})_n$ may be done
interactively, obtaining a fewer number of classes $(A^{(n)}_i)_i$
at each step.
\end{remark}

\begin{lemma}\label{lem:contmeas}
$(\mathcal F_n)_n$ is a filtration on $(X,\mathcal F)$. Moreover,
for any $n\in\NN$, $\pi_n$ is
a discrete equivalency on $(X,\mathcal X)$.
\end{lemma}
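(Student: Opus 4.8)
The plan is to argue by induction on $n$, carrying along three statements at once: $\pi_n$ is a discrete equivalency on $(X,\mathcal X)$, its classes form an at most countable partition $X/\pi_n\subseteq\mathcal F$ generating $\mathcal F_n$, and $\mathcal F_{n-1}\subseteq\mathcal F_n$. The base case $n=0$ is immediate: $\mathcal F_0=\sigma(T)=\{\varnothing,T,X\setminus T,X\}$ is generated by the finite partition $\{T,X\setminus T\}$, whose members lie in $\mathcal F$ since $T\in\mathcal F$, and $\pi_0$ is represented by the discrete random variable $I_T$.

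For the inductive step, fix $n\geq 1$ and assume $\pi_{n-1}$ is discrete with classes $(A^{(n-1)}_i)_i\subseteq\mathcal F$ generating $\mathcal F_{n-1}\subseteq\mathcal F$. The first thing I would check is that
\[
\Phi_n\colon X\to S,\qquad \Phi_n(x)=\big(P(x,A^{(n-1)}_i)\big)_i,
\]
is measurable from $(X,\mathcal X)$ to $(S,\mathcal B_S)$. Each coordinate $x\mapsto P(x,A^{(n-1)}_i)$ is $\mathcal F$-measurable, hence $\mathcal X$-measurable, by the very definition of a transition probability on $(X,\mathcal F)$ (this is exactly where $A^{(n-1)}_i\in\mathcal F$ is used); and $\sum_i P(x,A^{(n-1)}_i)=P(x,X)=1$ because the $A^{(n-1)}_i$ partition $X$, so $\Phi_n$ really takes values in $S$. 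Since $\mathcal B_S=\bigotimes_{m}\mathcal B_{[0,1]}\cap S$ is the trace of a product $\sigma$-algebra, measurability of $\Phi_n$ follows coordinatewise.

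Next I would rewrite \eqref{eq:projectionsta} as $\pi_n=\pi_{n-1}\cap\pi'_n$, where $\pi'_n$ is the pull-back of $\sim_{\epsilon_n}$ along $\Phi_n$, i.e.\ $x\,\pi'_n\,y\iff\Phi_n(x)\sim_{\epsilon_n}\Phi_n(y)$. Because $\sim_{\epsilon_n}$ is a discrete equivalence on $(S,\mathcal B_S)$, there is a discrete random variable $g\colon S\to\R$ with $p\sim_{\epsilon_n}q\iff g(p)=g(q)$; then $g\circ\Phi_n\colon X\to\R$ is measurable with at most countably many values, so $\pi'_n$ is a discrete equivalency on $(X,\mathcal X)$, and its classes are the sets $\Phi_n^{-1}(C)\in\mathcal F$ as $C$ ranges over the countably many $\sim_{\epsilon_n}$-classes in $\mathcal B_S$. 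By Lemma~\ref{lem:discrete_measur}, $\pi_{n-1}$ and $\pi'_n$ both correspond to countable measurable partitions; the partition consisting of all nonempty intersections of a $\pi_{n-1}$-class with a $\pi'_n$-class is again at most countable, its members lie in $\mathcal F$, and it is precisely $X/\pi_n$. Applying Lemma~\ref{lem:discrete_measur} the other way, $\pi_n$ is a discrete equivalency and $\mathcal F_n=\sigma(X/\pi_n)$ is generated by a countable partition contained in $\mathcal F$. Finally, $\pi_n\subseteq\pi_{n-1}$ forces $X/\pi_n$ to refine $X/\pi_{n-1}$, hence $\mathcal F_{n-1}\subseteq\mathcal F_n\subseteq\mathcal F$; this closes the induction and shows $(\mathcal F_n)_n$ is a filtration on $(X,\mathcal F)$.

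The only genuinely delicate point is the measurability of $\Phi_n$ into $(S,\mathcal B_S)$ with its trace product $\sigma$-algebra, together with the bookkeeping of propagating $\mathcal F_n\subseteq\mathcal F$ through the induction — it is this inclusion, and nothing subtler, that guarantees $P(\cdot,A^{(n)}_i)$ is even defined (recall $P$ is only given on $\mathcal F$) and measurable when we move on to step $n+1$. Everything else is a routine manipulation of countable measurable partitions through Lemma~\ref{lem:discrete_measur}.
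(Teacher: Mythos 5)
Your proof is correct and follows essentially the same route as the paper's: both hinge on the coordinatewise measurability of the map $x\mapsto\big(P(x,A^{(n-1)}_i)\big)_i$ into $(S,\mathcal B_S)$, pull back the discrete equivalence $\sim_{\epsilon_n}$ along it, and observe that intersecting two countable measurable partitions yields a countable measurable partition. You are somewhat more explicit than the paper about why the map lands in $S$ and about propagating the inclusion $\mathcal F_n\subseteq\mathcal F$ through the induction, but these are details of the same argument rather than a different one.
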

\begin{proof}
The
monotonicity of $(\mathcal F_n)_n$ is a simple consequence of
\eqref{eq:projectionsta}.

The statement
is true for $n=0$, since $T\in\mathcal X$. For the induction step,
let $\{A_1^{(n-1)},A_2^{(n-1)},\ldots\}\in\mathcal X$ be the
measurable countable partition of $X$ given by $X/\pi_{n-1}$. The
map $h:(X,\mathcal X)\to (S,\mathcal B(S))$ given by $ x\mapsto
(P(x,A_i^{(n-1)}))_i$ is therefore measurable. As $\sim_{\epsilon_n}$
is a discrete equivalency on $(S,\mathcal B_S)$, the map
$\pi_{\sim_{\epsilon_n}}\circ h:(X,\mathcal X)\to
(S/{\sim_{\epsilon_n}},2^{S/{\sim_{\epsilon_n}}})$ is also measurable,
where  $\pi_{\sim_{\epsilon_n}}$ is the natural projection
associated with $\sim_{\epsilon_n}$.
Thus, two points $x,y\in X$ are such that
\[
\Big(\big(P(x,A^{(n-1)}_i)_i\big)
\sim_{\epsilon_n}\big(P(y,A^{(n-1)}_i)_i\big)\Big)
\]
if and only if their image by $\pi_{\sim_{\epsilon_n}}\circ h$
is the same point of ${S/{\sim_{\epsilon_n}}}$. The new
partition of $X$ built by $\pi_n$ is thus obtained as an intersection of
the sets $A_i^{(n-1)}, i \geq 1$ ---which formed the
$\pi_{n-1}$-partition--- with the counter-images of ${S/{\sim_{\epsilon_n}}}$ by
$\pi_{\sim_{\epsilon_n}}\circ h$. Intersections between two
measurable countable partitions of $X$ being a measurable countable
partition of $X$, we are done.
\end{proof}


\subsection{The choice of $\mu$ and the definition of $(P_n)_n$}
Before defining $(P_n)_n$, we need the following result,
which will be proved in Section~\ref{sec:proofs}.
\begin{theorem}\label{thm:pi_infty}
Let $(\pi_n)_n$ be defined as in the previous section and
let $\pi_\infty = \cap_n \pi_n$. Then $\pi_\infty$ is a compatible
projection.
\end{theorem}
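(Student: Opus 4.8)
The plan is to check directly the two clauses in the definition of a compatible projection: that $\pi_\infty$ is a measurable equivalency carrying a well-defined $\sigma$-algebra, and that $P(x,\cdot)$ and $P(y,\cdot)$ agree on that $\sigma$-algebra whenever $x\,\pi_\infty\,y$. For the first clause, recall that by Lemma~\ref{lem:contmeas} every $\pi_n$ is discrete, so $\mathcal F_n=\sigma(X/\pi_n)$ is generated by a countable partition and hence $\pi_n=\pi_{\mathcal F_n}$ in the notation preceding \eqref{eq:vee_and_cap}. Putting $\mathcal F_\infty:=\vee_n\mathcal F_n$, relation \eqref{eq:vee_and_cap} gives $\pi_\infty=\cap_n\pi_{\mathcal F_n}=\pi_{\mathcal F_\infty}$. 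A countable union of countable generating families is countable, so $\mathcal F_\infty$ is countably generated; coding its generators by a single bounded real function (e.g.\ $f=\sum_k 2\cdot 3^{-k}I_{A_k}$ when $\mathcal F_\infty=\sigma(\{A_k\}_k)$) produces a measurable $f:(X,\mathcal X)\to(\R,\B)$ with $\sigma(f)=\mathcal F_\infty$ and $\pi_f=\pi_{\mathcal F_\infty}=\pi_\infty$. Assumption~\eqref{eq:A0def} is exactly what guarantees that this identification does not depend on the chosen $f$, so $\pi_\infty=\pi_f$ is a bona fide measurable equivalency.

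For compatibility, fix $x\,\pi_\infty\,y$; then $x\,\pi_n\,y$ for every $n$. Writing $(A^{(n-1)}_i)_i$ for the countable partition generating $\mathcal F_{n-1}$, the second conjunct in \eqref{eq:projectionsta} says that $p=(P(x,A^{(n-1)}_i))_i$ and $q=(P(y,A^{(n-1)}_i))_i$ lie in the same $\sim_{\epsilon_n}$-class, hence $\sum_i\big|P(x,A^{(n-1)}_i)-P(y,A^{(n-1)}_i)\big|=\|p-q\|_1<\epsilon_n$. Since any $F\in\mathcal F_{n-1}$ is a countable union of atoms $A^{(n-1)}_i$, countable additivity yields $|P(x,F)-P(y,F)|<\epsilon_n$ for all $F\in\mathcal F_{n-1}$. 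Now if $F\in\cup_m\mathcal F_m$, say $F\in\mathcal F_{m_0}$, then $F\in\mathcal F_{n-1}$ for every $n>m_0$ by monotonicity of the filtration, so $|P(x,F)-P(y,F)|<\epsilon_n\to0$, i.e.\ $P(x,F)=P(y,F)$. Finally $\cup_m\mathcal F_m$ is an algebra generating $\mathcal F_\infty=\sigma(f)$, and by Lemma~\ref{lem:contmeas} $\mathcal F_\infty\subseteq\mathcal F$, so $P(x,\cdot)$ and $P(y,\cdot)$ are genuine probability measures on $\mathcal F_\infty$; by uniqueness of the extension of a measure from a generating algebra (a Dynkin-class argument) they coincide on all of $\mathcal F_\infty$. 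This is precisely \eqref{defn:comp_proj} for $\pi_\infty=\pi_f$, so $\pi_\infty$ is a compatible projection.

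The step I expect to be the crux is the quantitative estimate in the second paragraph: one must notice that membership of $x\,\pi_\infty\,y$ forces the $\sim_{\epsilon_n}$-closeness of the \emph{whole} partition-probability vectors for \emph{every} $n$, convert this via the $\ell_1$/total-variation bound built into the definition of an $\epsilon$-equivalence into a uniform $\epsilon_n$-bound on $|P(x,F)-P(y,F)|$ over $F\in\mathcal F_{n-1}$, and then exploit the monotonicity of $(\mathcal F_n)_n$ to let $\epsilon_n\to0$ with $F$ fixed. The measurability clause is essentially a repackaging of \eqref{eq:vee_and_cap} together with Assumption~\eqref{eq:A0def}, and the passage from the algebra $\cup_m\mathcal F_m$ to $\mathcal F_\infty$ is routine once it is observed that $P(x,\cdot)$ and $P(y,\cdot)$ really are measures on $\mathcal F_\infty\subseteq\mathcal F$.
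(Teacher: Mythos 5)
Your proof is correct, and it reaches the conclusion by a genuinely different (and in one respect cleaner) route than the paper's. The paper does not verify \eqref{defn:comp_proj} directly: it proves the stronger statement that $P(\cdot,F)$ is $\sigma(f)$-measurable for every $F\in\sigma(f)$, by showing that each sublevel set $H=\{x\colon P(x,F)\le t\}$ with $F\in\mathcal F_n$ satisfies $H=\cap_{m>n}\pi_m^{-1}\pi_m(H)$, and then factors $P(\cdot,F)=h_F\circ f$ (Doob--Dynkin) to deduce $P(x,F)=P(y,F)$ whenever $x\,\pi_f\,y$; the identification $\pi_\infty=\pi_f$ with $\sigma(f)=\vee_n\mathcal F_n$ is quoted from Theorem~\ref{limit_eq} rather than re-derived via a Marczewski-type coding as you do. The quantitative kernel is the same in both arguments --- the $\ell_1$ bound $\|p-q\|_1<\epsilon_n$ built into $\sim_{\epsilon_n}$ together with $\epsilon_n\to0$ --- but you apply it two-sidedly to obtain the uniform estimate $|P(x,F)-P(y,F)|<\epsilon_n$ over all $F\in\mathcal F_{n-1}$, whereas the paper applies it one-sidedly ($P(y,F)\le P(x_m,F)+\epsilon_m$) to sublevel sets. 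For the passage from the algebra $\cup_m\mathcal F_m$ to $\sigma(f)$, your uniqueness-of-extension ($\pi$--$\lambda$) argument is actually tidier: the paper's monotone-class step only checks increasing limits $F_n\nearrow F$, while two probability measures agreeing on a generating algebra agree on the generated $\sigma$-algebra with no further work. What the paper's stronger measurability statement buys is exactly the hypothesis needed to define the reduced kernel in Theorem~\ref{thm:CP+TP}; but since that theorem recovers measurability from \eqref{defn:comp_proj} via Assumption~\eqref{eq:A0def}, nothing is lost by your more literal verification of the definition.
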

As a consequence of Theorem~\ref{thm:CP+TP} and of Theorem~\ref{thm:pi_infty},
a target problem $(X,\vee_n {\mathcal F}_n,T,P_{\infty})$ is well defined.
We intend to define $P_n$ as the $\mu$--weighted mean average
of $P_{\infty}$ given the information carried by
${\mathcal F}_n$.

More precisely, let $\mu$ be a probability measure on
$(X,\vee_n{\mathcal F}_{n})$ such that $\mu(F)>0$, for
any $F \in {\mathcal F}_n, F\neq\varnothing$ (the existence of
such a measure is shown in Example~\ref{exa:exist_meas}).

For any $F\in{\mathcal F}_{n}$, let $Y^F$ be the
$\vee_n{\mathcal F}_{n}$-random variable
such that $Y^F(\omega)= P_{\infty}(\omega,F)$. Define
\begin{equation}\label{eq:def_of_P_n}
{P}_n(x,F)= \EE_{\mu}[Y^F\vert {\mathcal F}_{n}](x), \qquad
\forall x\in X, \forall F\in {\mathcal F}_n.
\end{equation}
${P}_n$ is uniquely defined on $(X\times {\mathcal F}_n)$,
the only $\mu$-null set of ${\mathcal F}_n$ being the empty set.
Then we can ensure that ${P}_n(x,\cdot)$ is a probability
measure, for any $x\in X$.

\null

We give in the following an example of the measure $\mu$ that has
been used in Equation~\eqref{eq:def_of_P_n} which justifies its
existence.
\begin{example}\label{exa:exist_meas}
Let $(Y_n)_{n \geq 0}$ be a sequence of independent and identically
distributed geometric random variables, with $\mathbb P_{Y_i} ( j )
= 1 / 2^j , j\in\mathbb N$. Let ${\mathcal A}_n=\sigma(Y_0, \cdots,
Y_n)$ and set ${\mathcal A}=\vee{\mathcal A}_n$. There exists a
probability measure $\mathbb{P}$ on ${\mathcal A}$ such that
\begin{equation*}
\mathbb{P}(\cap_{i=0}^n\{Y_{l_i}=y_i\})=\mathbb{P}_{Y_{l_1}}(y_1)
\otimes\cdots \otimes
\mathbb{P}_{Y_{l_n}}(y_n)=\frac{1}{2^{\sum_{i=0}^ny_i}},
\end{equation*}
and thus, $\mathbb{P}(A)>0$, $\forall A\in\mathcal A_n,
A\neq\varnothing$. Moreover, it follows that for any $n$,
\begin{equation}\label{eq:mathbbPonFnYn}
A_1 \in\mathcal A_n , A_2 \in\sigma (Y_{n+1}), A_1\neq
\varnothing, A_2\neq \varnothing, \quad \Longrightarrow \quad
\mathbb{P}(A_1\cap A_2)>0.
\end{equation}
We check by induction that we can embed ${{\mathcal F}}_n $ into
${\mathcal A}_n$, for any $n \geq 0$. The searched measure $\mu$
will be the trace of $\mathbb P$ on the embedded $\sigma$-field
$\vee_n{\mathcal F}_{n}$.

For $n=0$, define $T \mapsto \{Y_0 = 1\}$, $X \setminus T \mapsto
\{ Y_0\geq 2\}$. The embedding forms a nontrivial partition, and
therefore the restriction of $\mathbb P$ to the embedding of $
{\mathcal F}_0$ defines a probability measure on ${\mathcal F}_0$ with
$\mu_0(F)>0$ if $F\neq\varnothing$.

For the induction step, suppose it is true for $n$. Given
$F_i^{(n)}\in{\mathcal F}_n$, we then have \( F_i^{(n)} \mapsto
A_i^{(n)} \), where \((A_i^{(n)})_i \) is a nontrivial partition in
$\mathcal A_n$ and therefore the restriction of $\mathbb P$ to the
embedding of $ {\mathcal F}_n$ defines a probability measure $\mu_n$ on
${\mathcal F}_n$ with $\mu_n(F)>0$ if $F\neq\varnothing$.

Given $F_i^{(n)}$, let $H_i^{(n+1)}:=\{F_j^{(n+1)}\colon F_j^{(n+1)}
\subseteq F_i^{(n)} \}$. The monotonicity of $\pi_n$ ensures that
each $F_j^{(n+1)}$ will belong to one and only one $H_i^{(n+1)}$.
Moreover, by definition of $F_j^{(n+1)}$, we have that
\begin{equation}\label{eq:examu1}
F_i^{(n)} = \cup \{ F_j^{(n+1)}\colon F_j^{(n+1)}\in H_i^{(n+1)}
\}.
\end{equation}
Since $X/\pi_{n+1}$ is at most countable, we may order $H_i^{(n+1)}$
for any $i$. We have accordingly defined an injective map
$X/\pi_{n+1}\to \mathbb N^2$, where
\[
F_j^{(n+1)} \mapsto (i,k) \iff F_j^{(n+1)} \text{ is the $k$-th
element in }H_i^{(n+1)} .
\]

According to the cardinality of $H_i^{(n+1)}$, define the
$n+1$-embedding
\[
F_j^{(n+1)} \mapsto (i,k) \mapsto A_j^{(n+1)} := A_i^{(n)} \cap
\begin{cases}
\{ Y_{n+1} = k \} & \text{ if }k < \#\{H_i^{(n+1)}\}
\\
\{ Y_{n+1} \geq k \} & \text{ if }k = \#\{H_i^{(n+1)}\}
\end{cases}
\]
By definition of $A_j^{(n+1)}$ and \eqref{eq:examu1}, it follows
that we have mapped ${\mathcal F}_{n+1}$ into a partition in
$\mathcal A_{n+1}$. Moreover, $\mathbb P(A_j^{(n+1)})>0$ as a
consequence of \eqref{eq:mathbbPonFnYn}. The restriction of
$\mathbb P$ to the embedding of $ {\mathcal F}_{n+1}$ defines a
probability measure on ${\mathcal F}_{n+1}$ with $\mu_{n+1}(F)>0$ if
$F\neq\varnothing$. Note that $\mu_{n+1}$ is by construction an
extension of $\mu_n$ to ${\mathcal F}_{n+1}$.

Finally, the extension Theorem ensures the existence of the required
$\mu$, which is just mapped to the trace of $\mathbb P$ on the
embedded ${{\mathcal F}}_\infty $.
\end{example}

\section{Numerical Discrete Example}

\begin{example}[Coupon Collector]
Let $n$ objects $ \{e_1,\ldots,e_n\}$ be picked repeatedly with
probability $p_i$ that object $e_i$ is picked on a given try, with
$ \sum_{i}p_i=1$. Find the earliest time at which all $n$ objects
have been picked at least once.
\end{example}
It is not difficult to show that the general Coupon Collector's
Problem may be embedded into a Markow network of $N=2^n-1$--nodes
(see, \cite{A06}).

Thus, let $P$ be a $N\times N$-transition matrix on the state set
$X=\{1,\ldots,N\}$.

For any $X/\pi=\{\bold{x_1},\ldots,\bold{x_n}\}$ ($n\leq N$), we
define the $N\times n$-matrix
$$
Q_{ij} =
\left\{%
\begin{array}{ll}
    1, & \text{if $i\in \bold{x_j}$;} \\
    0, & \text{otherwise.} \\
\end{array}%
\right.
$$
Then $P_\cdot = PQ$ is a nonnegative $N\times n$-matrix. It is a
transition probability matrix from $X$ to $X/\pi$ (we called it
$P(x,A_j^{(\cdot)})$). Each row $i$ represents the restriction of
$P(i,\cdot)$ to $\{\pi^{-1}(\bold{x_j}),j=1,\ldots,n\}$. As noted in
the proof of  Theorem~\ref{thm:mainta}, we should choose a
probability measure $\mu$ on $X$ and then define a new matrix $\hat P$ on
$\sigma({X/\pi})$ with \eqref{eq:def_of_P_n}. A
``neutral'' choice for $\mu$ is $\mu(i)=1/N$. Accordingly, by
\eqref{eq:def_of_P_n}, for any $i,j\in \{1,\ldots,n\}$,
\[
\hat{P}(\bold{x_i},\bold{x_j}) = \sum_{l\colon \pi(l)\in \bold{x_i}}
\frac{P_\cdot(l,\bold{x_j})}{\frac{\#\{l\colon \pi(l)\in
\bold{x_i}\}}{N}} \frac{1}{N} = \sum_{l\colon \pi(l)\in \bold{x_i}}
\frac{P_\cdot(l,\bold{x_j})}{\#\{l\colon \pi(l)\in \bold{x_i}\}}.
\]
A simple computation gives
\[
\hat{P} = (Q^TQ)^{-1} Q^T P Q
\]
where $(Q^TQ)^{-1} = \text{diag}(\frac{1}{\#\{l\colon \pi(l)\in
\bold{x_1}\}}, \frac{1}{\#\{l\colon \pi(l)\in \bold{x_2}\}} ,\ldots,
\frac{1}{\#\{l\colon \pi(l)\in \bold{x_n}\}} )$.

We have tested two target algorithms on a coupon collector problem with
$n=18$ objects. In this case $P$ is given by a $2^{18}\times 2^{18}$-sparse matrix.
The computation of $P^n$ is not practicable. The number of components of each class of
$X/\pi_m$ is plotted for $m=0,1,2,3$ in the following Figure~\ref{Fig:coupon1}
and Figure~\ref{Fig:coupon2}. The sequence $(X/\pi_m)_m$ varies according to the
target algorithm. We remark that both strategies converge to the same exact solution.

\begin{figure}[ph]
  {\psfrag{a0}[cm][ct]{{\tiny{$X/\pi_0=\{T,X\setminus T\}$}}}
  \psfrag{a1}[cm][ct]{{\tiny{$X/\pi_1=\{T=A^{(1)}_1,A^{(1)}_2,A^{(1)}_3\}$}}}
  \psfrag{a2}[cm][ct]{{\tiny{$X/\pi_2=\{T=A^{(2)}_1,\ldots,A^{(2)}_{13}\}$}}}
  \psfrag{a3}[cm][ct]{{\tiny{$X/\pi_3=\{T=A^{(3)}_1,\ldots,A^{(3)}_{29}\}$}}}
  \includegraphics[width=0.95\textwidth]{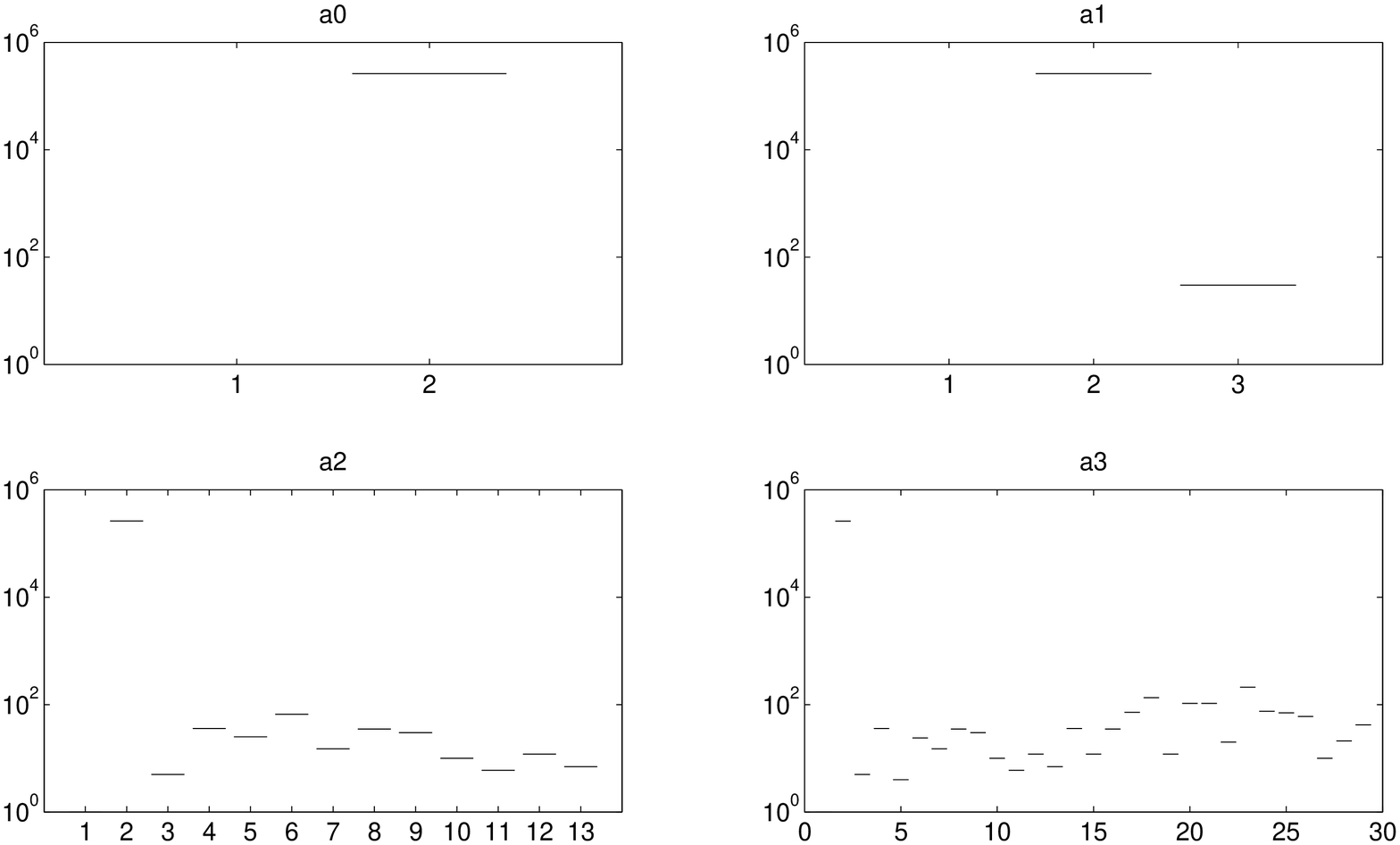}}\\
  \caption{Number of states for each class of $X/\pi_m$ (log scale), with
  $\epsilon_1 = 0.5$, $\epsilon_2 = 0.1$, $\epsilon_3 = 0.05$.
  The choice of $(\sim_{\epsilon_n})_n$
  is made as in Example~\ref{exa:e-cut}.}\label{Fig:coupon1}
\end{figure}
\begin{figure}[ph]
  {\psfrag{b0}[cm][ct]{{\tiny{$X/\pi_0=\{T,X\setminus T\}$}}}
  \psfrag{b1}[cm][ct]{{\tiny{$X/\pi_1=\{T=A^{(1)}_1,A^{(1)}_2,A^{(1)}_3\}$}}}
  \psfrag{b2}[cm][ct]{{\tiny{$X/\pi_2=\{T=A^{(2)}_1,\ldots,A^{(2)}_{12}\}$}}}
  \psfrag{b3}[cm][ct]{{\tiny{$X/\pi_3=\{T=A^{(3)}_1,\ldots,A^{(3)}_{30}\}$}}}
  \includegraphics[width=0.95\textwidth]{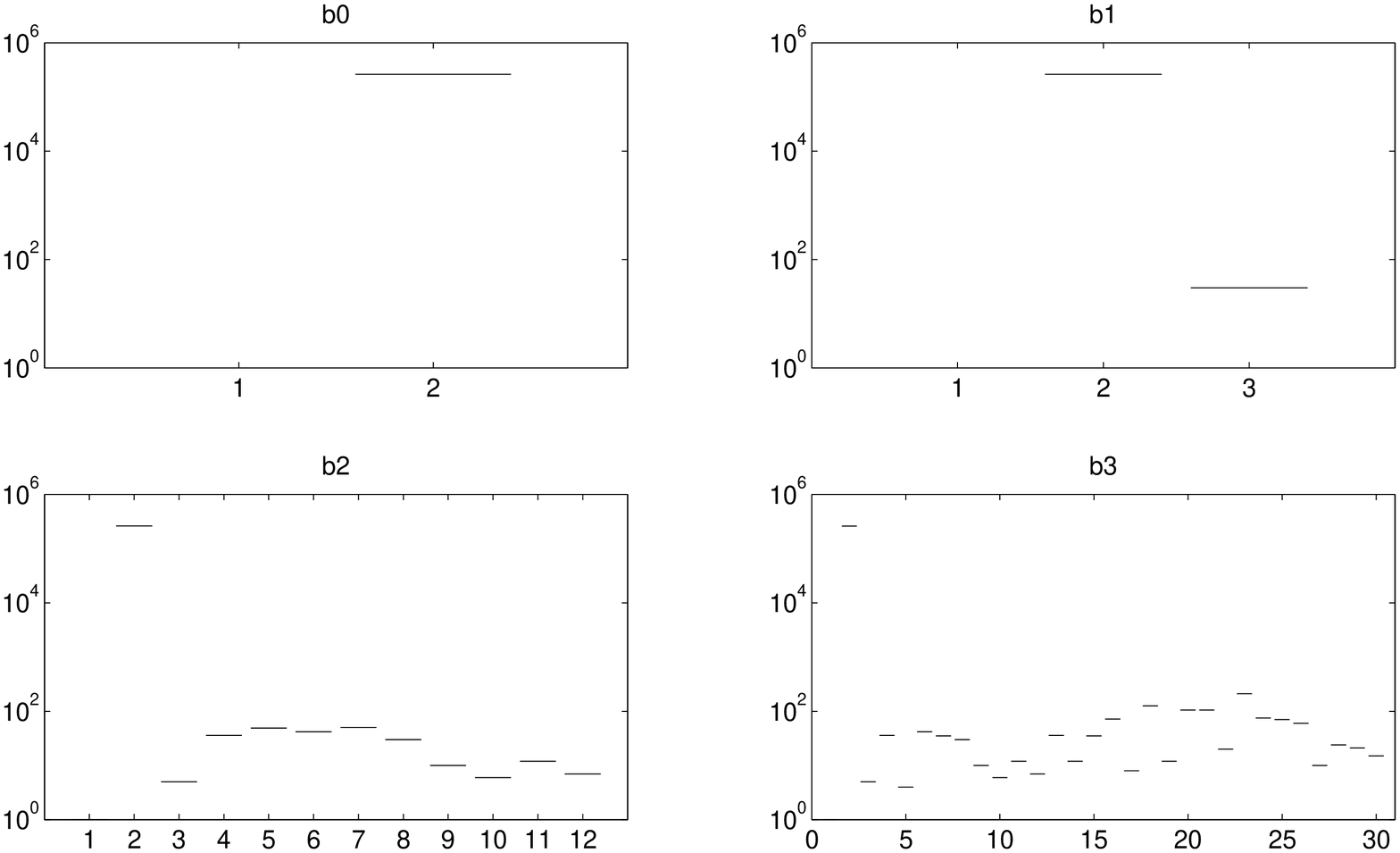}}\\
  \caption{Number of states for each class of $X/\pi_m$ (log scale)
  with $\epsilon_1 = 0.5$, $\epsilon_2 = 0.1$, $\epsilon_3 = 0.05$,
  with a different choice of the target algorithm.}\label{Fig:coupon2}
\end{figure}

\section{Blackwell}\label{sec:A0}

The problem of approximation is mathematically different if we start
from a Markov process with a countable set of states or with an
uncountable one. Let us consider, for the moment, the countable
case: $X$ is the at most countable set of the states and $\mathcal
X=2^X$ is the power set. Each function on $X$ is measurable. If we
take any equivalence relation on $X$, it is both measurable and
identified by the $\sigma$-algebra it induces (see
Theorem~\ref{thm:why_A0} below). This is not in general the case
when we deal with a measurable space $(X,\mathcal X)$, with $X$
uncountable.  In this section, we want to connect the process of
approximation with the upgrading information. More precisely, a
measurable equivalence $\pi=\pi_f$ defines both the partition
$X/\pi$ and the sigma algebra $\sigma(f)$. One wishes these two
objects to be related, in the sense that ordering should be
preserved. Example~\ref{exa:Why_A0}  below shows a paradox
concerning $\pi_f$ and $\sigma(f)$ when $X$ is uncountable.
In fact,
\begin{lemma}\label{lem:ordering_1}
Let $\mathcal A_1\subseteq\mathcal A_2$ be countably generated sub
$\sigma$-algebras of a measurable space $(X,\mathcal X)$. Then
$[x]_{\mathcal A_1}\supseteq [x]_{\mathcal A_2}$.

In particular, let $f,g$ be random variables. If
$\sigma(f)\supseteq\sigma(g)$, then $\pi_{f}\subseteq \pi_{g}$.
\end{lemma}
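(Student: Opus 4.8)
The plan is to work directly from the definition of the atom $[x]_{\mathcal A}=\cap\{A\in\mathcal A\colon x\in A\}$. First I would prove the first assertion. Fix $x\in X$. Since $\mathcal A_1\subseteq\mathcal A_2$, every set $A\in\mathcal A_1$ containing $x$ is also a member of $\mathcal A_2$ containing $x$; hence the collection $\{A\in\mathcal A_2\colon x\in A\}$ contains the collection $\{A\in\mathcal A_1\colon x\in A\}$, and therefore the intersection over the larger collection is contained in the intersection over the smaller one. This gives $[x]_{\mathcal A_2}\subseteq[x]_{\mathcal A_1}$ immediately, with no measurability subtleties. (It is worth noting that this step does not even use that the $\sigma$-algebras are countably generated; the hypothesis is there only so that the atoms are honest measurable sets and the later correspondence with equivalences is well behaved.)

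For the second assertion, I would apply the first part with $\mathcal A_1=\sigma(f)$ and $\mathcal A_2$ replaced appropriately: from $\sigma(f)\supseteq\sigma(g)$ we get $[x]_{\sigma(g)}\supseteq[x]_{\sigma(f)}$ for every $x$. It then remains to translate the atom inclusion into the inclusion of equivalence relations $\pi_f\subseteq\pi_g$. Here I would use the identification, valid for countably generated $\sigma$-algebras, between the equivalence $\pi_h$ (defined by $x\,\pi_h\,y\iff h(x)=h(y)$) and the equivalence $\pi_{\sigma(h)}$ induced by the atoms of $\sigma(h)$; that is, $x\,\pi_h\,y$ iff $[x]_{\sigma(h)}=[y]_{\sigma(h)}$. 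Granting this, if $x\,\pi_f\,y$ then $[x]_{\sigma(f)}=[y]_{\sigma(f)}$; since $y\in[y]_{\sigma(f)}=[x]_{\sigma(f)}\subseteq[x]_{\sigma(g)}$ we get $[x]_{\sigma(g)}=[y]_{\sigma(g)}$ (two atoms of $\sigma(g)$ that share the point $y$ must coincide, as distinct atoms are disjoint), hence $x\,\pi_g\,y$. Thus $\pi_f\subseteq\pi_g$.

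The only genuinely delicate point — and the step I expect to need the Blackwell-type Assumption~\eqref{eq:A0def} — is the identification of $\pi_h$ with $\pi_{\sigma(h)}$, i.e.\ that $h(x)=h(y)$ is \emph{equivalent} to $[x]_{\sigma(h)}=[y]_{\sigma(h)}$, not merely implied by it. The implication $h(x)=h(y)\Rightarrow[x]_{\sigma(h)}=[y]_{\sigma(h)}$ is automatic since every set in $\sigma(h)$ is of the form $h^{-1}(B)$. The reverse implication is exactly where pathologies of non-Blackwell spaces intervene, and this is precisely the role of the Assumption discussed in Section~\ref{sec:A0} (and of Example~\ref{exa:Why_A0}): under it, the atoms of $\sigma(h)$ are exactly the fibres $h^{-1}(\{h(x)\})$. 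So in the writeup I would either invoke the Assumption explicitly at this juncture, or — cleaner — state and prove the lemma purely at the level of atoms (the first assertion plus the atom-inclusion reformulation of the second), and defer the passage to $\pi_f,\pi_g$ to wherever the well-posedness from the Assumption is already in force. Everything else is the one-line monotonicity argument above.
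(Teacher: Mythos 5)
Your argument is correct and follows essentially the same route as the paper: the first assertion by monotonicity of the atom-defining intersection (the paper does this via the generator representation $[x]_{\mathcal A}=\cap_n C_n$, you do it directly from $[x]_{\mathcal A}=\cap\{A\in\mathcal A\colon x\in A\}$ — a cosmetic difference, yours being slightly more economical), and the second assertion by identifying $\pi_h$ with the atom-equivalence of $\sigma(h)$ and applying the first part to $\sigma(g)\subseteq\sigma(f)$.

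The one point to correct is your closing paragraph: the identification $[x]_{\sigma(h)}=h^{-1}(\{h(x)\})$, hence $\pi_h=\pi_{\sigma(h)}$, does \emph{not} require Assumption~\eqref{eq:A0def}. Every element of $\sigma(h)$ has the form $h^{-1}(B)$ with $B$ Borel, preimages commute with arbitrary intersections, and $\cap\{B\in\mathcal B_\R\colon h(x)\in B\}=\{h(x)\}$ because singletons are Borel in $\R$; so the atom of $\sigma(h)$ through $x$ is exactly the fibre $h^{-1}(\{h(x)\})$, unconditionally. What the Blackwell-type assumption governs is the \emph{converse} implication $\pi_f\subseteq\pi_g\Rightarrow\sigma(f)\supseteq\sigma(g)$ — indeed the paper states immediately after the lemma that it ``is not invertible'' without \eqref{eq:A0def}, and uses the lemma inside the proof that (A1) is equivalent to \eqref{eq:A0def}. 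Invoking the assumption here would both weaken the lemma (which is meant to hold on an arbitrary measurable space) and risk circularity with Lemma~\ref{lem:A0_A1}. So drop the hedge: your main argument already constitutes a complete, unconditional proof.
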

\begin{proof} See Appendix \ref{app:measuring}.
\end{proof}
 The problem is that even if a partition is more
informative than another one, it is not true that it generates a
finer $\sigma$-algebra, \emph{i.e.}, the following implication is
not always true for any couple of random variables $f$ and $g$
\begin{equation}\tag{\textrm{A0}}\label{eq:A0def}
 \pi_f\subseteq\pi_g \Longrightarrow
\sigma(f)\supseteq\sigma(g).
\end{equation}
Then Lemma~\ref{lem:ordering_1} is not invertible, if we do not
require the further Assumption~\eqref{eq:A0def} on the measurable
space $(X,\mathcal X)$. This last fact connects the space
$(X,\mathcal X)$ with the theory of Blackwell spaces (see
Lemma~\ref{lem:A0_A1}). We will assume the sole
Assumption~\eqref{eq:A0def}.
\begin{example}[$\pi_f=\pi_g \nRightarrow \sigma(f)=\sigma(g)$]
We give here a counterexample to Assumption~\eqref{eq:A0def}, where
two random variables $f,g$ generate two different sigma
algebras $\sigma(f)\neq\sigma(g)$ with the same set of atoms.
Obviously, Assumption~\eqref{eq:A0def} does not hold.

Let $(X,{\mathcal B}_X)$ be a Polish space and suppose ${\mathcal
B}_X \subsetneq \mathcal X$. Let $A \in \mathcal X \setminus
{\mathcal B}_X$ and consider the sequence $\{A_n,n\in\NN\}$ that
determines ${\mathcal B}_X$, i.e.\ ${\mathcal
B}_X=\sigma(A_n,n\in\NN)$. Let $\mathcal A=\sigma(A, A_n, n\in\NN)$.
 ${\mathcal B}_X\subsetneq \mathcal A $. As a consequence
of Lemma~\ref{lem:count_gen_new}, there exist two random variables
$f,g$ such that ${\mathcal B}_X=\sigma(f)$ and
$\mathcal A=\sigma(g)$. The atoms of ${\mathcal B}_X$ are the points
of $X$, and then the atoms of ${\mathcal A}$ are also the points of
$X$, since ${\mathcal B}_X\subseteq \mathcal A$.
\end{example}

We recall here the definition of Blackwell spaces. A measurable
space $(X,\mathcal X)$ is said \emph{Blackwell} if $\mathcal X$ is a
countably generated $\sigma$-algebra of $X$ and $\mathcal A=\mathcal
X$ whenever $\mathcal A$ is another countably generated
$\sigma$-algebra of $X$ such that $\mathcal A\subseteq \mathcal X$,
and $\mathcal A$ has the same atoms as $\mathcal X$. A metric space
$X$ is Blackwell if, when endowed with its Borel $\sigma$-algebra,
it is Blackwell. The measurable space $(X,\mathcal X)$ is said to be
a \emph{strongly Blackwell space} if $\mathcal X$ is a countably
generated $\sigma$-algebra of $X$ and
\begin{itemize}
 \item[(\textrm{A1})]
$\mathcal A_1=\mathcal A_2$ if and only
if the sets of their atoms coincide, where
$\mathcal A_1$ and $\mathcal A_2$ are
countably generated $\sigma$-algebras with
$\mathcal A_i\subseteq\mathcal X$ $i=1,2$.
\end{itemize}

For what concerns Blackwell spaces, the literature is quite
extensive. D.~Bla\-ck\-well proved that every analytic subset of a
Polish space is, with respect to its relative Borel $\sigma$-field,
a strongly Blackwell space (see \cite{black1956}). Therefore, if
$(X,\mathcal B_X)$ is (an analytic subset of) a Polish space and
 $\mathcal B_X\subsetneq \mathcal X$, then $(X,\mathcal X)$ cannot
be a weakly Blackwell space. To see this, take $A_1,A_2\ldots$ a
base of $\mathcal B_X$ and $A\in\mathcal X\setminus\mathcal B_X$.
Then $\mathcal B_X=\sigma(A_1,A_2\ldots)$ and $\mathcal
A=\sigma(A,A_1,A_2\ldots)$ have the same set of atoms (the points of
$X$) but ${\mathcal A}\nsubseteq {\mathcal B}_X$ (or, equivalently,
the identity function $I_d:(X,\mathcal B_X)\to(X,\mathcal A)$ is not
measurable).
Moreover, as any (at most) countable set equipped with any
$\sigma$-algebra may be seen as an analytic subset of a
Polish space, then it is a strongly Blackwell space.

A.~Maitra exhibited coanalytic sets that are not Blackwell spaces
(see \cite{Maitra70}). M.~Orkin constructed
 a nonanalytic (in fact nonmeasurable) set in a Polish space that is a Blackwell space (see \cite{Orkin72}).
Jasi\'nski showed (see \cite{Jasinski85}) that continuum hypothesis
(CH) implies that there exist uncountable Sier\-pi\'n\-ski and Luzin
subsets of $\R$ which are Blackwell spaces (implying in a strong
way that Blackwell spaces do not have to be Lebesgue measurable or
have the Baire property). Jasi\'nski also showed that CH implies
that there exist uncountable
 Sierpi\'nski and Luzin subsets of $\R$ which are not  Blackwell spaces
 (implying in a strong way that Lebesgue measurable sets and sets with the
Baire property
  do not have to be Blackwell spaces). This latter result is strengthened by
R.M.~Shortt in
  \cite{Shortt87} by showing that CH implies the existence of uncountable
Sierpi\'nski and Luzin
  subsets of $\R$ which are highly non-Blackwell in the sense that all
Blackwell subspaces of the
  two sets are countable.

Note that Assumption~\eqref{eq:A0def} and
Assumption~(A1) coincide, as the following Lemma states.
\begin{lemma}\label{lem:A0_A1}
Let $(X,\mathcal X)$ be a measurable space.
Then \eqref{eq:A0def} holds if and only
if {\upshape{(A1)}} holds.
\end{lemma}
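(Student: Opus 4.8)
The plan is to prove the two implications separately, translating in each direction between the language of random variables (used in \eqref{eq:A0def}) and the language of countably generated sub-$\sigma$-algebras (used in (A1)). The bridge between the two is the standard fact — which I would cite as Lemma~\ref{lem:count_gen_new}, already invoked in the excerpt — that a sub-$\sigma$-algebra $\mathcal A\subseteq\mathcal X$ is countably generated if and only if $\mathcal A=\sigma(f)$ for some real random variable $f$, together with Lemma~\ref{lem:ordering_1}, which gives the ``easy'' direction $\sigma(f)\supseteq\sigma(g)\Rightarrow\pi_f\subseteq\pi_g$ and, equivalently, $\mathcal A_1\subseteq\mathcal A_2\Rightarrow[x]_{\mathcal A_1}\supseteq[x]_{\mathcal A_2}$. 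Note also that $\pi_f$ is precisely $\pi_{\sigma(f)}$ in the notation of \eqref{eq:vee_and_cap}, i.e.\ $x\,\pi_f\,y\iff[x]_{\sigma(f)}=[y]_{\sigma(f)}$, since the atom of $\sigma(f)$ through $x$ is $f^{-1}(\{f(x)\})$; so the set of atoms of $\sigma(f)$ is exactly the partition $X/\pi_f$.

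\emph{(A1) $\Rightarrow$ \eqref{eq:A0def}.} Suppose (A1) holds and let $f,g$ be random variables with $\pi_f\subseteq\pi_g$. Put $\mathcal A_1=\sigma(f)$ and $\mathcal A_2=\sigma(f,g)=\sigma(f)\vee\sigma(g)$, both countably generated sub-$\sigma$-algebras of $\mathcal X$. Since $\mathcal A_1\subseteq\mathcal A_2$, Lemma~\ref{lem:ordering_1} gives $[x]_{\mathcal A_1}\supseteq[x]_{\mathcal A_2}$ for every $x$; I must show the reverse inclusion, i.e.\ that $\mathcal A_1$ and $\mathcal A_2$ have the same atoms, and then (A1) forces $\mathcal A_1=\mathcal A_2$, hence $\sigma(g)\subseteq\mathcal A_2=\sigma(f)$, which is \eqref{eq:A0def}. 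The inclusion $[x]_{\mathcal A_1}\subseteq[x]_{\mathcal A_2}$ is where the hypothesis $\pi_f\subseteq\pi_g$ enters: if $y\in[x]_{\mathcal A_1}$ then $x\,\pi_f\,y$, so by hypothesis $x\,\pi_g\,y$, i.e.\ $y\in[x]_{\sigma(g)}$; combined with $y\in[x]_{\sigma(f)}$ this gives $y\in[x]_{\sigma(f)}\cap[x]_{\sigma(g)}=[x]_{\sigma(f)\vee\sigma(g)}=[x]_{\mathcal A_2}$, using \eqref{eq:vee_and_cap} for the intersection of atoms under a join.

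\emph{\eqref{eq:A0def} $\Rightarrow$ (A1).} Conversely, assume \eqref{eq:A0def} and let $\mathcal A_1,\mathcal A_2\subseteq\mathcal X$ be countably generated with the same atoms. By Lemma~\ref{lem:count_gen_new}, write $\mathcal A_1=\sigma(f)$ and $\mathcal A_2=\sigma(g)$. Having the same atoms means exactly $X/\pi_f=X/\pi_g$ as partitions, i.e.\ $\pi_f=\pi_g$; in particular $\pi_f\subseteq\pi_g$ and $\pi_g\subseteq\pi_f$. Applying \eqref{eq:A0def} to the first gives $\sigma(f)\supseteq\sigma(g)$, and to the second gives $\sigma(g)\supseteq\sigma(f)$; hence $\mathcal A_1=\sigma(f)=\sigma(g)=\mathcal A_2$, which is (A1). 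The only-if direction of (A1) (equal $\sigma$-algebras have equal atoms) is trivial.

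The main obstacle is purely bookkeeping: making sure the equivalence between ``atoms of $\sigma(f)$'' and ``the partition $X/\pi_f$'' is airtight, and that the atom-of-a-join identity $[x]_{\mathcal A_1\vee\mathcal A_2}=[x]_{\mathcal A_1}\cap[x]_{\mathcal A_2}$ is available in the countably generated setting — both of which follow from the explicit description of atoms of a countably generated $\sigma$-algebra recalled just before the definition of measurable equivalence, and from \eqref{eq:vee_and_cap}. No real analytic difficulty arises; the content of the lemma is that the two assumptions are literally the same statement phrased in two notations.
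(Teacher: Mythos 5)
Your proof is correct and follows essentially the same route as the paper's: both directions hinge on Lemma~\ref{lem:count_gen_new} as the bridge between countably generated sub-$\sigma$-algebras and random variables, and the key step for (A1)$\Rightarrow$\eqref{eq:A0def} is the same comparison of $\sigma(f)$ with $\sigma(f,g)$ via the atom identity \eqref{eq:vee_and_cap}, which the paper phrases as a proof by contradiction and you phrase directly. Your explicit write-out of the \eqref{eq:A0def}$\Rightarrow$(A1) direction (applying \eqref{eq:A0def} twice to $\pi_f=\pi_g$) is a slightly more careful rendering of what the paper dispatches with a reference to Lemma~\ref{lem:ordering_1}, but it is not a different argument.
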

\begin{proof}
Lemma \ref{lem:count_gen_new} in appendix  states
that $\mathcal A\subseteq\mathcal X$ is countably generated if and
only if there exists a random variable $f$ such that $\mathcal
A=\sigma(f)$. In addition, as a consequence of
Lemma~\ref{lem:ordering_1}, we have only to prove that (A1) implies
\eqref{eq:A0def}. By contradiction, assume (A1), $\pi_f\subseteq
\pi_g$, but $\sigma(g)\nsubseteq\sigma(f)$. We have
$\sigma(f,g)\neq\sigma(f)$, and then $\pi_{\sigma(f,g)}\neq\pi_f$ by
(A1) and Lemma~\ref{lem:count_gen_new}. On the other hand, as a
consequence of Eq.~\eqref{eq:vee_and_cap}, we have that
$\pi_{\sigma(f,g)}=\pi_{\sigma(f)\vee\sigma(g)}=\pi_f\cap\pi_g=
\pi_f$.
\end{proof}
We call \emph{weakly Blackwell space} a measurable space
$(X,\mathcal X)$ such that Assumption~\eqref{eq:A0def} holds. If
$(X,\mathcal X)$ is a weakly Blackwell space, then $(X,\mathcal F)$
is a weakly Blackwell space, for any $\mathcal F\subseteq \mathcal
X$. Moreover, every strong Blackwell space is both a Blackwell space
and a weakly Blackwell space whilst the other inclusions are not
generally true. In \cite{Jasiski-1985,Shortt-1987}, examples are
provided of Blackwell spaces which may be shown not to be weakly Blackwell. The
following example shows that a weakly Blackwell space need not be
Blackwell.
\begin{example}[weakly Blackwell $\nRightarrow$ Blackwell]
Let $X$ be an uncountable set and $\mathcal X$ be the
countable--cocountable $\sigma$-algebra on $X$. $\mathcal X$ is
easily shown to be not countably generated, and therefore
$(X,\mathcal X)$ is not a Blackwell space. Take any countably
generated $\sigma$-field $\mathcal A\subseteq \mathcal X$, i.e.\
$\mathcal A=\sigma(\{A_i,i\in\mathbb N\})$.
\begin{itemize}
 \item Since each set (or its complementary) of $\mathcal X$ is countable,
then, without loss of generality, we can assume the cardinality
of $X\setminus A_i$ to be countable.
 \item Each atom $B$ of $\sigma(A_i,i\in\mathbb N)$ is of the form
\begin{equation}\label{eq:count-cocount}
 B = \cap_{i=1,2,\ldots} C_i,
\qquad \text{where $C_i=A_i$ or $C_i=X\setminus A_i$, for any $i$.}
\end{equation}
\end{itemize}
Note that the cardinality of the set
$A:=\cup_{i} (X\setminus A_i)$ is countable,
as it is a countable union of countable sets. As a consequence of
\eqref{eq:count-cocount}, we face two types of atoms:
\begin{enumerate}
 \item for any $i$, $C_i=A_i$. This is the atom made by the intersections
of all the uncountable generators. This is an uncountable atom, as
it is equal to $X\setminus A$.
 \item exists $i$ such that $C_i=X\setminus A_i$. This implies that this atom
is a subset of the countable set $A$. Therefore, all the atoms
(except $X\setminus A$) are disjoint subsets of the countable set
$A$ and hence they are countable.
\end{enumerate}
It follows that the number of atoms of $\mathcal
A$ is at most countable. Thus, $(X,\mathcal A)$ is a strongly Blackwell
space, i.e.\ $(X,\mathcal X)$ is a weakly Blackwell space.
\end{example}

\begin{example}[Information and $\sigma$-algebra
(see \cite{Dubra-Echenique-2004})]\label{exa:Why_A0} Suppose
$X=[0,1]$, $\mathcal X=\sigma(\mathcal Y,A)$ where $\mathcal Y$ is
the countable--cocountable $\sigma$-algebra on $X$ and $A=[0,1/2)$.
Consider a decisionmaker who chooses action 1 if $x < \frac{1}{2}$
and action 2 if $x \geq \frac{1}{2}$. Suppose now that the
information is modeled either as the partition of all elements of
$X$, $\tau=\{x, x \in X\}$ and in this case the decisionmaker is
perfectly informed, or as the partition $\tau'=\{A, X\setminus A\}.$
If we deal with $\sigma$-algebras as a model of information then
$\sigma(\tau)=\mathcal Y$ and $\sigma(\tau')=\sigma(A)$. The
partition $\tau$ is more informative than $\tau'$, whereas
$\sigma(\tau)$ is not finer than $\sigma(\tau')$. In fact
$A\not\in\mathcal Y$ and therefore if the decisionmaker uses
$\sigma(\tau)$ as its structure of information, believing it more
detailed than $\sigma(\tau')$, he will never know whether or not the
event $A$ has occurred and can be led to take the wrong decision. In
this case, $\sigma$-algebras do not preserve information because
they are not closed under arbitrary unions. However, if we deal with
Blackwell spaces, any countable $\sigma$-algebra is identified by
its atoms and therefore will possess an informational content (see
\cite{stinchcombe}, for example).
\end{example}

The following theorem, whose proof is in Appendix
\ref{app:A0}, links the measurability of any relation with
the cardinality of the space and Assumption~\eqref{eq:A0def}. It
shows the main difference between the uncountable case and the
countable one.
\begin{theorem}\label{thm:why_A0}
Assume {\upshape{(CH)}}. Let $(X,\mathcal X)$ be a
measurable space. The following
properties are equivalent:
\begin{enumerate}
 \item\label{co:3} Any equivalence relation $\pi$
on $X$ is measurable and Assumption~\eqref{eq:A0def} holds;
 \item\label{co:2} $(X,2^X)$ is a weakly Blackwell space;
 \item\label{co:1} $X$ is countable and $\mathcal X=2^X$.
\end{enumerate}
\end{theorem}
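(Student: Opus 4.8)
The plan is to prove the cycle of implications $(\ref{co:1})\Rightarrow(\ref{co:3})\Rightarrow(\ref{co:2})\Rightarrow(\ref{co:1})$, recalling that a weakly Blackwell space is by definition one for which \eqref{eq:A0def} holds. The continuum hypothesis will be invoked only in the last implication, $(\ref{co:2})\Rightarrow(\ref{co:1})$, which is also the substantive one.

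For $(\ref{co:1})\Rightarrow(\ref{co:3})$ assume $X$ is countable and $\mathcal X=2^X$. The observation to isolate first is this: since $X$ is countable, any random variable $h$ has countable range, so every union of fibres of $h$ is of the form $h^{-1}(C)$ for a countable --- hence Borel --- set $C\subseteq h(X)$; thus $\sigma(h)$ is exactly the $\sigma$-algebra generated by the partition of $X$ into $\pi_h$-classes, and its atoms are these classes. Now, given any equivalence $\pi$ on $X$, the quotient $X/\pi$ is at most countable, so composing the canonical projection with an injection $X/\pi\hookrightarrow\R$ gives a discrete random variable $f$ (measurable because $\mathcal X=2^X$) with $\pi=\pi_f$, so $\pi$ is measurable. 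For \eqref{eq:A0def}: if $\pi_f\subseteq\pi_g$, then each of the (countably many) $\pi_g$-classes is a union of $\pi_f$-classes, hence lies in $\sigma(f)$ by the observation above, so $\sigma(g)\subseteq\sigma(f)$.

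For $(\ref{co:3})\Rightarrow(\ref{co:2})$, first note that ``every equivalence on $X$ is measurable'' already forces $\mathcal X=2^X$: for a nonempty proper $A\subsetneq X$ the two-class relation with classes $A$ and $X\setminus A$ equals $\pi_f$ for some $\mathcal X$-measurable $f$ taking two distinct values, so $A=f^{-1}(\{c\})\in\sigma(f)\subseteq\mathcal X$; since $A$ was arbitrary, $\mathcal X=2^X$, and then the clause of $(\ref{co:3})$ asserting \eqref{eq:A0def} is precisely the statement that $(X,2^X)$ is weakly Blackwell. For $(\ref{co:2})\Rightarrow(\ref{co:1})$ we must show that $X$ is countable, the identity $\mathcal X=2^X$ being part of the hypothesis $(\ref{co:2})$. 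Suppose not; under (CH) an uncountable $X$ satisfies $|X|\ge\mathfrak c=|\R|$, so there is a surjection $g\colon X\to\R$. Choose a non-Borel set $N\subseteq\R$ with $|N|=|\R\setminus N|=\mathfrak c$ (e.g.\ a Bernstein set) and a bijection $\varphi\colon\R\to\R$ carrying $N$ onto $[0,1]$ and $\R\setminus N$ onto $\R\setminus[0,1]$; then $\varphi^{-1}([0,1])=N\notin\mathcal B_\R$, so $\varphi^{-1}(\mathcal B_\R)\neq\mathcal B_\R$. Put $f=\varphi\circ g$. As $\varphi$ is a bijection, $f$ and $g$ have the same fibres, hence $\pi_f=\pi_g$; and as $g$ is surjective the map $B\mapsto g^{-1}(B)$ is injective on $2^\R$, hence $\sigma(f)=g^{-1}(\varphi^{-1}(\mathcal B_\R))\neq g^{-1}(\mathcal B_\R)=\sigma(g)$. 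But $\pi_f\subseteq\pi_g$ and $\pi_g\subseteq\pi_f$, so applying \eqref{eq:A0def} to $(f,g)$ and to $(g,f)$ yields $\sigma(f)=\sigma(g)$, a contradiction. Hence $X$ is countable, which together with $\mathcal X=2^X$ is $(\ref{co:1})$ and closes the cycle.

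The main obstacle is $(\ref{co:2})\Rightarrow(\ref{co:1})$. The other two implications only rearrange definitions, but here one really needs (CH) --- to eliminate the intermediate cardinalities $\aleph_0<|X|<\mathfrak c$, for which membership of $(X,2^X)$ in the class of weakly Blackwell spaces is not settled by cardinality alone --- together with the ZFC fact that $\R$ admits self-bijections that are not Borel isomorphisms, in order to build the pair $(f,g)$ witnessing the failure of \eqref{eq:A0def}.
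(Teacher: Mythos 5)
Your proof is correct, and it runs through the same cycle of implications as the paper ($\ref{co:1}\Rightarrow\ref{co:3}\Rightarrow\ref{co:2}\Rightarrow\ref{co:1}$ in the labels of the statement), but the internal machinery is genuinely different and more self-contained. For $\ref{co:1}\Rightarrow\ref{co:3}$ the paper invokes the fact that every countable set is strongly Blackwell together with Lemma~\ref{lem:A0_A1}, whereas you verify \eqref{eq:A0def} by hand from the observation that on a countable space $\sigma(h)$ consists exactly of the unions of fibres of $h$; for $\ref{co:3}\Rightarrow\ref{co:2}$ the paper cites Lemma~\ref{lem:why_A02} (which also yields the cardinality bound $\mathrm{card}(X)\le\mathrm{card}(\R)$ that you correctly identify as unnecessary for this step), while you extract only what is needed, namely $\mathcal X=2^X$ from the measurability of two-class equivalences. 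The substantive implication $\ref{co:2}\Rightarrow\ref{co:1}$ is in both cases ``pull back a non-Borel set through a CH-supplied surjection of $X$ onto $\R$'', but the witnesses differ: the paper produces two nested countably generated $\sigma$-algebras $g^{-1}(\mathcal B_{\mathbb R})\subsetneq g^{-1}(\sigma(\mathcal B_{\mathbb R},N))$ with the same atoms and then appeals to the equivalence of (A1) and \eqref{eq:A0def} from Lemma~\ref{lem:A0_A1}, whereas you produce two functions $g$ and $f=\varphi\circ g$ with identical fibres and $\sigma(f)\ne\sigma(g)$, contradicting \eqref{eq:A0def} directly and twice; your route costs the (standard, AC-based) construction of a non-Borel self-bijection $\varphi$ of $\R$, but it saves the detour through atoms and (A1). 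One shared blemish, which is not a defect of your argument relative to the paper's: neither proof actually derives the clause $\mathcal X=2^X$ of item~\ref{co:1} from item~\ref{co:2} alone --- item~\ref{co:2} speaks only of $(X,2^X)$ and is insensitive to $\mathcal X$ --- so your parenthetical that $\mathcal X=2^X$ is ``part of the hypothesis'' glosses over the same point the authors leave implicit.
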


\section{Proofs}\label{sec:proofs}
The following theorem mathematically motivates our approximation
problem: any limit of a monotone sequence of discrete equivalence
relationships is a measurable equivalence.
\begin{theorem}\label{limit_eq}
For all $n\in\NN$, let $\pi_n$ be a discrete equivalency.
Then $\pi_\infty = \cap_n \pi_n$ is a measurable equivalency.
Conversely, for any measurable equivalency $\pi$, there
exists a sequence $(\pi_n)_n$ of discrete equivalencies such that
$\pi_\infty = \cap_n \pi_n$.
\end{theorem}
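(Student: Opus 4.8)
The plan is to prove the two directions separately, using throughout the correspondence between countably generated sub-$\sigma$-algebras and random variables (Lemma~\ref{lem:count_gen_new}) and the relation \eqref{eq:vee_and_cap} between the $\vee$ of $\sigma$-algebras and the $\cap$ of the induced equivalences.

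\medskip

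\emph{First direction: $\pi_\infty=\cap_n\pi_n$ is measurable.}
Each $\pi_n$ is discrete, so by Lemma~\ref{lem:discrete_measur} (equivalently, by definition of discrete equivalency) there is a discrete random variable $f_n:(X,\mathcal X)\to(\R,\mathcal B_\R)$ with $\pi_n=\pi_{f_n}$, and $\sigma(f_n)$ is generated by the at most countable partition $X/\pi_n$. First I would combine the $f_n$ into a single real-valued random variable. Since each $f_n$ takes at most countably many values, one can encode the whole sequence $(f_n)_n$ into one random variable $f:(X,\mathcal X)\to(\R,\mathcal B_\R)$ with $\sigma(f)=\vee_n\sigma(f_n)$; concretely, choose injections of the (countable) value sets into disjoint bounded subsets of $\R$ and interleave digits, or simply invoke Lemma~\ref{lem:count_gen_new} applied to the countably generated $\sigma$-algebra $\vee_n\sigma(f_n)$. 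Then $f$ is a (not necessarily discrete) random variable, and by \eqref{eq:vee_and_cap},
\[
\pi_f=\pi_{\vee_n\sigma(f_n)}=\cap_n\pi_{f_n}=\cap_n\pi_n=\pi_\infty.
\]
Hence $\pi_\infty=\pi_f$ is a measurable equivalency. (Note this direction does \emph{not} need Assumption~\eqref{eq:A0def}: it only uses that $\vee_n\sigma(f_n)$ is countably generated and the identity \eqref{eq:vee_and_cap}, which holds for countably generated $\sigma$-algebras.)

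\medskip

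\emph{Converse: every measurable $\pi$ is such a countable intersection.}
Let $\pi=\pi_f$ for a random variable $f:(X,\mathcal X)\to(\R,\mathcal B_\R)$. Then $\sigma(f)$ is countably generated, say $\sigma(f)=\sigma(\{B_k\}_k)$. Set $\mathcal A_n=\sigma(B_1,\dots,B_n)$; each $\mathcal A_n$ is finitely generated, hence generated by a finite partition of $X$, so the induced equivalence $\pi_{\mathcal A_n}$ is discrete. Moreover $\mathcal A_n\uparrow$ with $\vee_n\mathcal A_n=\sigma(f)$, so by \eqref{eq:vee_and_cap},
\[
\cap_n\pi_{\mathcal A_n}=\pi_{\vee_n\mathcal A_n}=\pi_{\sigma(f)}.
\]
It remains to check $\pi_{\sigma(f)}=\pi_f$, i.e.\ that two points lie in the same atom of $\sigma(f)$ exactly when $f$ identifies them; this is immediate since the atoms of $\sigma(f)=f^{-1}(\mathcal B_\R)$ are the nonempty sets $f^{-1}(\{t\})$ for $t$ in the range of $f$ (using that $\mathcal B_\R$ separates points). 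Taking $\pi_n:=\pi_{\mathcal A_n}$ gives a sequence of discrete equivalencies with $\cap_n\pi_n=\pi$, which is what we want. Alternatively, and more simply, one can take $\pi_n:=\pi_{g_n}$ where $g_n$ is the $n$-digit truncation of a binary expansion of (a bounded transform of) $f$, so that $g_n$ is discrete, $\sigma(g_n)\uparrow\sigma(f)$, and the same computation applies.

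\medskip

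\emph{Main obstacle.} The routine-looking but genuinely load-bearing point is the identity \eqref{eq:vee_and_cap} together with the passage from a countable family of discrete random variables to a single real random variable generating the same $\sigma$-algebra: one must make sure the combined map lands in $(\R,\mathcal B_\R)$ and is measurable, which is exactly where Lemma~\ref{lem:count_gen_new} (countably generated $\Leftrightarrow$ generated by a single random variable) is invoked. The other delicate point is that \eqref{eq:vee_and_cap} is asserted only for \emph{countably generated} $\sigma$-algebras; since each $\mathcal A_n$ (or $\sigma(g_n)$) is finitely generated and their join $\sigma(f)$ is countably generated by hypothesis, we stay inside the scope of that identity throughout, so no appeal to Assumption~\eqref{eq:A0def} is needed for either direction.
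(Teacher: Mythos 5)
Your proposal is correct and follows essentially the same route as the paper: both directions reduce to Lemma~\ref{lem:count_gen_new} (a countably generated $\sigma$-algebra is $\sigma(f)$ for a single random variable $f$) combined with the identity \eqref{eq:vee_and_cap}, and your ``alternative'' converse via the truncations $2^{-n}\lfloor 2^n f\rfloor$ is literally the paper's argument. The only cosmetic difference is that your primary converse approximates $\sigma(f)$ by the finitely generated $\sigma(B_1,\dots,B_n)$ rather than by dyadic truncations of $f$; you are also right that Assumption~\eqref{eq:A0def} is not needed here.
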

\begin{proof} See Appendix \ref{app:measuring}.
\end{proof}
\begin{proof}[Proof of Theorem \ref{thm:CP+TP}]
Let $\pi=\pi_f$ be a compatible projection. We define
\[
P_\pi(x,F):=P(x,F),\qquad \forall x\in X,
    \forall F\in \sigma(f).
\]
What remains to prove is that
$P_\pi\in\mathrm{TrP}(X,\mathcal X,\sigma(f))$. More precisely,
we have to show that $P_\pi (\cdot,F)$ is $\sigma(f)$-measurable,
$\forall F\in \sigma(f)$. By contradiction, there exists $F\in \sigma(f)$
such that the random variable $Y^F(\omega)=P_\pi(\omega,F)$ is not
$\sigma(f)$-measurable. Then $\sigma(Y^F)\nsubseteq\sigma(f)$, and hence
$\pi_{Y^F}\nsupseteq \pi_f$ by Assumption~\eqref{eq:A0def},
which contradicts Equation~\eqref{defn:comp_proj}.
\end{proof}

\begin{proof}[Proof of Theorem \ref{thm:pi_infty}]
As a consequence of Theorem~\ref{limit_eq}, $\pi_\infty = \pi_f$,
where $\sigma(f)=\vee_n \mathcal F_n$.
Define
\[
P_{\infty}(x,F):=P(x,F),\qquad \forall x\in X,
    \forall F\in \sigma(f).
\]
We will prove that, for any $F\in\sigma(f)$,
$P_{\infty}(\cdot,F)$ is $\sigma(f)$-measurable and consequently $\pi_{\infty}$ will be a compatible projection. This implies that there
exists a measurable function $h_F:(\R,\mathcal B_R)\to(\R,\mathcal B_R)$ so
that $P_{\infty}(\omega,F) = h_F(f(\omega))$. Therefore, if $x\,\pi_f\,y$,
then $P_{\infty}(x,F)=P_{\infty}(y,F)$, which is the thesis.

Thus, we show that for
any $F \in \sigma(f)$ and $t\in\mathbb R$, we have
\begin{equation}\label{eq:proof_F-i-m1}
H:=\{x\colon P(x,F)\leq t\} \in \sigma(f)
\end{equation}
To prove Equation~\eqref{eq:proof_F-i-m1},
we first show that it is true when $F\in \mathcal F_n$ by proving
that
$$
H =\cap_{m > n}\pi^{-1}_m\pi_m(H),
$$
which implies that $H \in \sigma(f)$. The inclusion $H
\subseteq\cap_m\pi^{-1}_m\pi_m(H)$ is always true. For the other
inclusion, let $y \in \cap_{m > n}\pi^{-1}_m\pi_m(H)$. Let $m
> n$; there exists $x_m\in H$ such that $y \pi_m x_m$.
Therefore, Equation~\eqref{eq:projectionsta} and the definition of
$\sim_{\epsilon_n}$ imply $P(y,F)\leq P(x_m, F) + \epsilon_m\leq t
+\epsilon_m$, for any $m > n$. As $\epsilon_m \searrow 0$, we obtain
that $y \in H$. Then Equation~\eqref{eq:proof_F-i-m1}
is true on the algebra $Alg:=\cup_n\mathcal F_n$.\\

Actually, let $F_n \in Alg$ such that $F_n \nearrow F$. We prove
that Equation~\eqref{eq:proof_F-i-m1} holds for $F$ by showing
that
$$
H =\{x\colon P(x,F)\leq t\} =\cap_n \{x\colon P(x,F_n)\leq t\} =:
\cap_n H_n .
$$
Again, since $F_n\subseteq F$, then $P(x,F_n)\leq P(x,F)$ and
therefore $H\subseteq \cap_n H_n $. Conversely, the set $\cap_n
H_n\setminus H$ is empty since the sequence of $\mathcal
X$-measurable maps $P(\cdot,F)-P(\cdot,F_n)$ converges to $0$:
\[
P(\cdot,F)-P(\cdot,F_n) = P(\cdot,F\setminus F_n) \to
P(\cdot,\varnothing) = 0.
\]
Then Equation~\eqref{eq:proof_F-i-m1} is true on the monotone
class generated by the algebra $Alg=\cup_n\mathcal F_n$, i.e.,
Equation~\eqref{eq:proof_F-i-m1} is true on $\sigma(f)$.
\end{proof}
\begin{proof}[Proof of Theorem \ref{thm:maintb}]
Given a target algorithm $(X,\mathcal F_n,T,P_n)_n$,
let $\pi_\infty=\pi_f $ be defined as in
Theorem~\ref{thm:pi_infty}.
We show that $\pi_\infty$ is optimal.
Let $\psi_g$ be another compatible projection and let $(X,\sigma(g),T,P_g)$
be the target problem given by Theorem~\ref{thm:CP+TP}. We are
going to prove by induction on $n$ that
\begin{equation}\label{eq:optim_n}
\forall n\in\NN, \qquad \mathcal F_n\subseteq \sigma(g) .
\end{equation}
In fact, for $n=0$ it is sufficient to note that
$\mathcal F_0=\sigma(\{T\})\subseteq \sigma(g)$.

Equation~(\ref{eq:projectionsta}) states that $\mathcal F_n =
\sigma(\mathcal F_{n-1},h_n)$, where $h_n$ is the discrete random variable,
given by Lemma~\ref{lem:discrete_measur}, s.t.\
\begin{align*}
x\,&\pi_{h_n}\,y  \\
& \Updownarrow \\
\Big(\big(P(x,A^{(n-1)}_i)_i\big)
& \sim_{\epsilon_n}\big(P(y,A^{(n-1)}_i)_i\big)\Big)
\end{align*}

Let $k^{(n-1)}_i:X\to [0,1]$ be defined as $ k^{(n-1)}_i(x) =
P(x,A^{(n-1)}_i)$.  Then
\[
\begin{diagram}
  \node{X} \arrow{seee,l}{
  h_n}\arrow{s,l}{
  (k^{(n-1)}_i)_{i\in\NN}}
\\
  \node{S} 
\arrow[3]{e,r}{\epsilon_n} 
  \node[3]{S/\epsilon_n}
\end{diagram}
\]
Obviously, $\sigma(h_n)\subseteq
\sigma(k^{(n-1)}_1,k^{(n-1)}_2,\ldots)$. For the induction step, as
$A^{(n-1)}_i \in\mathcal F_{n-1}\subseteq \sigma(g)$, we have that $
P_g(\cdot,A^{(n-1)}_i) $ is $\sigma(g)$-measurable, and therefore
$\sigma(k^{(n-1)}_i)\subseteq\sigma(g)$. Then
$\mathcal F_{n}=\sigma(\mathcal F_{n-1},h_n)\subseteq
\sigma(\mathcal F_{n-1},k^{(n-1)}_1,k^{(n-1)}_2,\ldots)
\subseteq\sigma(g)$. Therefore
$\sigma(f)=\vee_n\mathcal F_{n}\subseteq\sigma(g)$, which implies
$\pi_\infty\supseteq \psi_g$ by Lemma~\ref{lem:ordering_1},
and hence $\pi_\infty$ is optimal.
\end{proof}

\begin{corollary}\label{cor:uniqueness}
$\pi_\infty$ does not depend on the choice of $\mathrm{Str}$.
\end{corollary}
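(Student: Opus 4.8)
The plan is simply to read the statement off from Theorem~\ref{thm:maintb} together with the uniqueness of the optimal compatible projection. First I would fix two target algorithms $\mathrm{Str}$ and $\mathrm{Str}'$ and let $\pi_\infty=\cap_n\pi_n$ and $\pi_\infty'=\cap_n\pi_n'$ be the associated limit equivalences. The proof of Theorem~\ref{thm:maintb} used no particular feature of the chosen strategy beyond the fact that it is a target algorithm: it constructs, for an \emph{arbitrary} target algorithm, a limit equivalence $\pi_\infty=\pi_f$ with $\sigma(f)=\vee_n\mathcal F_n$ and shows $\sigma(f)\subseteq\sigma(g)$ for every compatible projection $\psi_g$. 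Hence the same argument, applied to $\mathrm{Str}$ and to $\mathrm{Str}'$ separately, shows that \emph{both} $\pi_\infty$ and $\pi_\infty'$ are optimal compatible projections for $(X,\mathcal F,T,P)$.

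Then I would invoke the elementary fact that an optimal compatible projection, when it exists, is unique: by the very definition of optimality one has $\pi_\infty\supseteq\pi_\infty'$ and, symmetrically, $\pi_\infty'\supseteq\pi_\infty$, so $\pi_\infty=\pi_\infty'$. There is no genuine obstacle here; the entire content has already been delivered by Theorem~\ref{thm:maintb}. The only point worth stressing is that, thanks to Assumption~\eqref{eq:A0def}, the optimal compatible projection is well defined as an equivalence relation on $X$ (equivalently, as the $\sigma$-algebra it generates), so the equality $\pi_\infty=\pi_\infty'$ is meaningful and in turn forces $\vee_n\mathcal F_n=\vee_n\mathcal F_n'=\mathcal F_\pi$ and, via Theorem~\ref{thm:CP+TP}, the coincidence of the limit transition probabilities on the common $\sigma$-algebra.
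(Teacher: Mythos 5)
Your argument is exactly the paper's: Theorem~\ref{thm:maintb} shows that the limit $\pi_\infty$ of \emph{any} target algorithm is an optimal compatible projection, and uniqueness of the optimal projection forces the limits of any two strategies to coincide. This matches the paper's own two-line proof of the corollary, so the proposal is correct and takes essentially the same route.
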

\begin{proof}
$\pi_\infty=\cap_n\pi_n$ is optimal, $\forall (\pi_n)_n =
\mathrm{Str}(P)$. The optimal projection being unique,
we are done.
\end{proof}

\begin{proof}[Proof of Theorem \ref{thm:mainta}]
Let $\pi_\infty=\pi_f $ be defined as in
Theorem~\ref{thm:pi_infty} and $(X,\sigma(f),T,P_\infty)$ be
given by Theorem~\ref{thm:CP+TP} so that
\(P(x,F)=P_\infty(x,F)\) for any $F\in\sigma(f)$.
Then each $(P_n)_n$
of Definition~\ref{eq:def_of_P_n} can be rewritten as
\begin{equation}\label{eq:def_of_P_n2}
{P}_n(x,F)= \frac{\int_{[x]_n}
P_\infty(x,F)\mu(dz)}{\mu([x]_n)}
, \qquad
\forall x\in X, \forall F\in {\mathcal F}_n,
\end{equation}
where $[x]_n$ is the $\pi_n$-class of equivalence of
$x$ and $\mu([x]_n)>0$ since $[x]_n\neq\varnothing$.

Note that $d(P,{P}_m)\leq 2\sum_n\beta^n$.
Then, for any $\epsilon>0$, there
exists an $N$ so that $\sum_{n>N} \beta^n\leq \frac{\epsilon}{2}$.
Therefore we are going to prove by induction on $n$ that
$$
\sup_x \vert {P}^n_m(x,T)-P^n(x,T)\vert \rightarrow 0 \text{
as } m \text{ tends to infinity},
$$
which completes the proof.
If $n=1$, then by definition of $\epsilon_m$, since
$T\in {\mathcal F}_{m-1}$, we have that
 \begin{align*}
|{P}_m(x,T)-P(x,T)
|
&
\leq \frac{\int_{[x]_n}|P_\infty(z,T)
-P(x,T)|\mu(dz)}{\mu([x]_n)}
\\
&
= \frac{\int_{[x]_n}|P(z,T)
-P(x,T)|\mu(dz)}{\mu([x]_n)}
\\
& \leq \epsilon_m \frac{\int_{[x]_n}{\mu(dz)}}{\mu([x]_n)}
= \epsilon_m .
 \end{align*}
For the induction step, we note that
 \begin{multline}\label{eq:prth1IH0}
\big\vert {P}^{n+1}_m(x,T)-P^{n+1}(x,T)\big\vert \\
\leq
\sum_i \Big\vert {P}_m (x ,A_i^{(m)})
{P}^n_m(A_i^{(m)},T) - \int_{A_i^{(m)}} P(x, dz )
P^n (z,T) \Big\vert ,
 \end{multline}
where $(A_i^{(m)})_i$ is the partition of $X$ given by $\pi_m$. By
induction hypothesis,
\begin{equation*}
| {P}^n_m(z,T) - P^n (z,T)| \leq \epsilon
\end{equation*}
for $m\geq m_0$ large enough. Since $[z]_m = A_i^{(m)}$ if $z\in
A_i^{(m)}$, it follows that
 \[
\int_{A_i^{(m)}} P(x , dz ) \Big|{P}^n_m(A_i^{(m)},T) - P^n (z,T)
\Big| \leq \epsilon \int_{A_i^{(m)}}
P(x , dz ) .
 \]
Equation~\eqref{eq:prth1IH0} becomes
 \begin{multline*}
\big\vert {P}^{n+1}_m(x,T)-P^{n+1}(x,T)\big\vert \\
\leq \epsilon + \sum_i {P}^n_m(A_i^{(m)},T) \big\vert {P}_m (x,A_i^{(m)})
- P(x , A_i^{(m)} ) \big\vert \\
\leq \epsilon + \sum_i \big\vert {P}_m (x,A_i^{(m)})
- P(x , A_i^{(m)} ) \big\vert.
  \end{multline*}
On the other hand, by Equation~\eqref{eq:def_of_P_n2},
\[
{P}_m (x,A_i^{(m)}) - P(x , A_i^{(m)} ) =
\int_{[x]_m}\frac{P_\infty(z,A^{(m)}_i)-P(x,A^{(m)}_i)}{\mu([x]_m)}\mu(dz).
\]
The definition of $\sim_{\epsilon_{m+1}}$ states that
 \[
\sum_i \big\vert P_\infty(z,A^{(m)}_i)-P(x,A^{(m)}_i) \big\vert \leq
\epsilon_{m+1}
 \]
whenever $z\in [x]_m$ and therefore
 \begin{multline*}
\big\vert {P}^{n+1}_m(x,T)-P^{n+1}(x,T)\big\vert \\
\leq \epsilon + \int_{[x]_m}\sum_i \big\vert
P_\infty(z,A^{(m)}_i)-P(x,A^{(m)}_i)\big\vert
\frac{\mu(dz)}{\mu([x]_m)} \leq \epsilon + \epsilon_{m+1}.
  \end{multline*}
Since $\epsilon_m \rightarrow 0$ as $m$ tends to
infinity, we get the result.
\end{proof}

\begin{proof}[Proof of Theorem \ref{thm:maintc}]
By Definition~\ref{eq:def_of_P_n} and Lemma~\ref{lem:contmeas},
$(P_n(\cdot,F))_{n\geq m}$ is
a martingale with respect to the filtration $(\mathcal F_n)_{n\geq m}$,
for any $F\in\mathcal F_m$. Then, if $Y^F(x)=P(x,F)$ as in
Definition~\ref{eq:def_of_P_n}, we have that
\[
P_n(x,F) \mathop{\longrightarrow}_{n\to\infty}
\EE_{\mu}[Y^F\vert \vee_n{\mathcal F}_{n}](x) = Y^F(x), \qquad
\text{for $\mu$-a.e. } x\in X, \forall F\in \cup_m {\mathcal F}_m.
\]
Let $\pi_\infty=\pi_f $ be defined as in
Theorem~\ref{thm:pi_infty} and $(X,\sigma(f),T,P_\infty)$
given by Theorem~\ref{thm:CP+TP} so that
\(P(x,F)=P_\infty(x,F)\) for any $F\in\sigma(f)$.
Then
\[
P_n(x,F) \mathop{\longrightarrow}_{n\to\infty} P_\infty(x,F)
\]
for any $x\in X$ and $F\in \cup_m {\mathcal F}_m$, the only
$\mu$-null set in $\cup_m {\mathcal F}_m$ being the empty set.
\end{proof}

\subsection{Weak convergence of conditional probabilities}

Let the target problem $(X,\mathcal F,T,P)$ be given and let
$\mathrm{Str}=(\mathrm{Str}_n)_n$,
where $\mathrm{Str}_n(X,\mathcal F,T,P)=
(X,\tilde{\mathcal F}_n,T,\tilde P_n)$ be a target algorithm.
In order to prove Theorem \ref{prop:Top}, which states the total
convergence of the probability measure ${P}_n(x,\cdot)$ towards
$P(x,\cdot)$, we proceed as follows:
\begin{itemize}
    \item first, we define the topology $\varrho_P$ on $X$;
    \item then, we define a ``natural'' topology
    $\tau_{\mathrm{Str}}$ on $X$ associated to any target algorithm
    $({\mathrm{Str}_n})_n$.
    We prove in Theorem~\ref{thm:tot_conv} the total convergence
    of $(P_n)_n$ to $P_\infty$, under this topology;
    \item then, we define the topology $\tau_P$ on $X$ as the intersection
    of all the topologies $\tau_{\mathrm{Str}}$;
    \item finally, we show Theorem~\ref{prop:Top}
    by proving that $\varrho_P\subseteq\tau_{\mathrm{Str}}$.
    The non-triviality of $\varrho_P$ will imply that of $\tau_P$.
\end{itemize}

We introduce the pseudometric $d_P$ on $X$ as follows:
\[
d_P ( x,y) = \sum_n\beta^n \big\vert P^n(x,T) - P^n(y,T)\big\vert.
\]

Now, let $\tau_{\mathrm{Str}}$ be
the topology generated by $\cup_n {\mathcal F}_n$.
$C$ is a closed set if and only if $C = \cap_n C_n, C_n \in
{{\mathcal F}_n}$. In fact,
if $C \in {\mathcal F}_n$, for a given $n$, then $C
\in {\mathcal F}_{n+p}$, for any $p$ and therefore $C$ is closed.
$(X,\tau_{\mathrm{Str}})$ is a topological space.
\begin{remark}
Let us go back to Example~\ref{exa:dyadic}. The topology defined by
asking that the sets in each $\mathcal F_n$ are closed is strictly
finer than the standard topology. On the other hand, the same
example may be explained with left closed--right opened dyadic
subdivisions, which leads to a different topology that also contains
the natural one. Any other ``reasonable'' choice of subdivision will
show the same: the topologies are different, and all contain the
standard one. In the same manner, we are going to show that all the
topologies $\tau_{\mathrm{Str}}$ contain the standard one,
$\varrho_P$.
\end{remark}

\begin{theorem}\label{thm:tot_conv}
Let the target problem $(X,\mathcal F,T,P)$ and the target algorithm
$(X,\mathcal F_n,T,P_n)_n$ be given.
For any target algorithm $\mathrm{Str}$,
\[
{P}_n(x,
\cdot)\mathop{\longrightarrow}\limits^{\mathrm{tot}}_{\tau_{\mathrm{Str}}}
P(x, \cdot), \qquad \forall x\in X.
 \]
\end{theorem}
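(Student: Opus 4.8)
The plan is to unwind the definition of total convergence and then reduce weak convergence on $(X,\tau_{\mathrm{Str}})$ to the convergence on the algebra $\cup_m\mathcal F_m$ that Theorem~\ref{thm:maintc} already supplies, by a Portmanteau-type inequality for closed sets. Fix $x\in X$. Put $\mathcal F_\infty:=\vee_n\mathcal F_n=\sigma(f)$, where $\pi_\infty=\pi_f$ is the projection of Theorem~\ref{thm:pi_infty}; by Theorems~\ref{thm:pi_infty} and~\ref{thm:CP+TP} the quadruple $(X,\mathcal F_\infty,T,P_\infty)$ is a target problem, so $P_\infty(x,\cdot)$ is a probability measure on $\mathcal F_\infty$ with $P_\infty(x,F)=P(x,F)$ for every $F\in\mathcal F_\infty$. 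Thus, in the notation of the definition of total convergence, $\nu_n=P_n(x,\cdot)$ on $\mathcal F_n$, $\nu_\infty=P_\infty(x,\cdot)$ on $\mathcal F_\infty$, and what must be shown is that \emph{every} sequence of extensions $\bar P_n(x,\cdot)\colon\mathcal F_\infty\to[0,1]$ of $P_n(x,\cdot)$ converges weakly to $P_\infty(x,\cdot)$ on $(X,\tau_{\mathrm{Str}})$.

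First I would record convergence on the generating algebra: if $F\in\cup_m\mathcal F_m$, say $F\in\mathcal F_{m_0}$, then $F\in\mathcal F_n$ for all $n\ge m_0$, hence $\bar P_n(x,F)=P_n(x,F)$ since $\bar P_n$ agrees with $P_n$ on $\mathcal F_n$; by Theorem~\ref{thm:maintc} (with $P_\pi=P_\infty$) this tends to $P_\infty(x,F)$. So $\bar P_n(x,F)\to P_\infty(x,F)$ for every $F\in\cup_m\mathcal F_m$, independently of the choice of extensions.

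Next comes the topological step. From the discussion preceding the statement, $C$ is $\tau_{\mathrm{Str}}$-closed iff $C=\cap_nC_n$ with $C_n\in\mathcal F_n$; replacing $C_n$ by $\cap_{k\le n}C_k$ I may take $(C_n)_n$ decreasing, and then $C\in\mathcal F_\infty$. For $n\ge m$ we have $C\subseteq C_m\in\mathcal F_m\subseteq\mathcal F_n$, so $\bar P_n(x,C)\le\bar P_n(x,C_m)=P_n(x,C_m)$; letting $n\to\infty$ with $m$ fixed and using the previous paragraph, $\limsup_n\bar P_n(x,C)\le P_\infty(x,C_m)$. Since $C_m\downarrow C$ inside $\mathcal F_\infty$, continuity from above of the measure $P_\infty(x,\cdot)$ gives $P_\infty(x,C_m)\downarrow P_\infty(x,C)$, whence $\limsup_n\bar P_n(x,C)\le P_\infty(x,C)$ for every closed $C$; complementing (the measures have total mass $1$) this is $\liminf_n\bar P_n(x,G)\ge P_\infty(x,G)$ for every open $G$. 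Finally, the same characterization of closed sets shows $\tau_{\mathrm{Str}}\subseteq\mathcal F_\infty$, so any bounded $\tau_{\mathrm{Str}}$-continuous $g$ is $\mathcal F_\infty$-measurable and its level sets $\{g\ge t\}$ are closed; applying the closed-set inequality to each such level set together with the layer-cake formula and the reverse Fatou lemma (all integrands bounded, over a bounded range of $t$) yields $\int g\,d\bar P_n(x,\cdot)\to\int g\,dP_\infty(x,\cdot)$, that is $\bar P_n(x,\cdot)\mathop{\longrightarrow}\limits^{w}_{\tau_{\mathrm{Str}}}P_\infty(x,\cdot)$. As the extensions were arbitrary this is exactly $P_n(x,\cdot)\mathop{\longrightarrow}\limits^{\mathrm{tot}}_{\tau_{\mathrm{Str}}}P(x,\cdot)$, and $x\in X$ was arbitrary.

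I expect the reduction to the algebra and the closed-set bound to be routine; the one delicate point is that $(X,\tau_{\mathrm{Str}})$ is neither metrizable nor $T_1$, so the usual Portmanteau equivalences cannot simply be quoted. I would handle this precisely as above: the inclusion $\tau_{\mathrm{Str}}\subseteq\mathcal F_\infty$ makes continuous functions and their level sets measurable, after which the layer-cake / reverse-Fatou argument reproves the implication from the closed-set $\limsup$ bound to convergence of integrals of bounded continuous functions, with no separation hypothesis. That is the only step where care is needed; everything else is bookkeeping.
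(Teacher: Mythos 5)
Your proof is correct and follows essentially the same route as the paper's: write a $\tau_{\mathrm{Str}}$-closed set as a decreasing intersection $C=\cap_m C_m$ with $C_m\in\mathcal F_m$, bound $\bar P_n(x,C)$ by $P_n(x,C_m)$, and combine convergence on the algebra with continuity from above of $P_\infty(x,\cdot)$ to obtain the closed-set $\limsup$ inequality. The only differences are cosmetic: the paper uses the moving set $C_{n-1}$ together with the uniform $\epsilon_n$-estimate in place of your double limit via Theorem~\ref{thm:maintc}, and it simply cites Billingsley for the Portmanteau implication where you spell it out for the non-metrizable topology $\tau_{\mathrm{Str}}$.
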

\begin{proof}
Let $C =\cap_nC_n$ be a closed set of $\mathrm{Str}$ and
let $\bar{P}_n$ be any extension
of ${P}_n$ to $\vee_n\mathcal F_n$.
We have to check that ${\limsup}_n \bar{P}_n(x,C)
\leq P(x,C)$, for any given $x$ (see, e.g.,
\cite{Billingsley:book}). Note that, since $C\in\vee_n\mathcal F_n$,
we have $P(x,C)=P_\infty( x,C)$. But, $\bar{P}_n(x,C)-P_\infty(
x,C) \leq \bar{P}_n(x,C_{n-1})-P_\infty( x,C)=
{P}_n(x,C_{n-1})-P_\infty( x,C)$. Actually,
\begin{multline*}
{P}_n(x,C_{n-1})-P_\infty( x,C)\\
= \Big(\underbrace{{P}_n(x,C_{n-1})-P_\infty( x,C_{n-1})}_{I}\Big)+
\Big(\underbrace{P_\infty( x,C_{n-1})- P_\infty( x,C)}_{II}\Big).
\end{multline*}
$I\rightarrow 0$ as $n$ tends to infinity, from the target
algorithm, since as $\epsilon_n \searrow 0$. For any $\epsilon >0$,
there exists $N_1>0$, such that for any $n \geq N_1$, $\vert
{P}_n(x,C_{n-1})-P_\infty( x,C_{n-1})\vert \leq
\frac{\epsilon}{2}$. \hfill\\\null $II\rightarrow 0$ as $n$ tends to
infinity, from the continuity of the measure. For any $\epsilon >0$,
there exists $N_2>0$ and for any $n \geq N_2$,
$\vert P_\infty( x,C_{n-1})- P_\infty( x,C)\vert \leq
\frac{\epsilon}{2}.$
\end{proof}

An example of a natural extension of $P_n$ to $\bar{P}_n$ is given
by
\[
\bar{P}_n(x,F)= \EE_{\mu}[Y^F\vert \vee_n\mathcal F_n](x), \qquad
\forall x\in X, \forall F\in \vee_n\mathcal F_n,
\]
where, for any $F\in\vee_n\mathcal F_n$, $Y^F$ is the $\vee_n{\mathcal F}_{n}$-random variable such that $Y^F(\omega)=
P_{\infty}(\omega,F)$. As mentioned for ${P}_n$,
$\bar{P}_n(x,\cdot)$ is a probability measure, for any $x\in X$.

\begin{corollary}
For any fixed strategy $\mathrm{Str}(P)$, let $P_n$ be as in
Theorem~\ref{thm:mainta}. We have
\[
{P}_n(x, \cdot)
\mathop{\longrightarrow}\limits^{\mathrm{tot}}_{\tau_P} P(x,
\cdot),
 \]
for any given $x$.
\end{corollary}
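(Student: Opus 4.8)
The plan is to read the corollary off from Theorem~\ref{thm:tot_conv} by the elementary principle that total convergence is inherited when the topology is made coarser. Fix the strategy $\mathrm{Str}$ and write $\mathrm{Str}_n(X,\mathcal F,T,P)=(X,\mathcal F_n,T,P_n)$, so that $\mathcal X_\infty:=\vee_n\mathcal F_n$ is the domain on which the limiting measure $P(x,\cdot)=P_\infty(x,\cdot)$ is defined and on which the admissible extensions $\bar P_n(x,\cdot)$ of $P_n(x,\cdot)$ live. By construction $\tau_P=\bigcap_{\mathrm{Str}'}\tau_{\mathrm{Str}'}$ is an intersection of topologies, hence a topology, and in particular $\tau_P\subseteq\tau_{\mathrm{Str}}$. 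Since $\tau_{\mathrm{Str}}$ is generated by $\cup_n\mathcal F_n$, its Borel $\sigma$-algebra is $\sigma(\cup_n\mathcal F_n)=\mathcal X_\infty$; as $\tau_P$ is coarser, the Borel $\sigma$-algebra of $(X,\tau_P)$ is contained in $\mathcal X_\infty$, so weak convergence on $(X,\tau_P)$ is meaningful for the measures at hand and the family of admissible extensions $\bar P_n(x,\cdot):\mathcal X_\infty\to[0,1]$ of $P_n(x,\cdot)$ depends only on the filtration $(\mathcal F_n)_n$, not on whichever of $\tau_{\mathrm{Str}}$ or $\tau_P$ we consider.

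The key step is then the remark that if $\tau'\subseteq\tau$ are two topologies on $X$ and a sequence of probability measures converges weakly on $(X,\tau)$, it converges weakly on $(X,\tau')$: every bounded $\tau'$-continuous function is bounded $\tau$-continuous (equivalently, every $\tau'$-closed set is $\tau$-closed, so the portmanteau $\limsup$ condition on closed sets used in the proof of Theorem~\ref{thm:tot_conv} is automatically inherited). Applying this with $\tau=\tau_{\mathrm{Str}}$, $\tau'=\tau_P$, to each admissible extension $\bar P_n(x,\cdot)$: Theorem~\ref{thm:tot_conv} gives $\bar P_n(x,\cdot)\to^{w}_{\tau_{\mathrm{Str}}}P(x,\cdot)$ for every admissible extension, hence $\bar P_n(x,\cdot)\to^{w}_{\tau_P}P(x,\cdot)$ for every admissible extension, which is by definition $P_n(x,\cdot)\to^{\mathrm{tot}}_{\tau_P}P(x,\cdot)$ for the given $x$.

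I do not expect a genuine obstacle; the only point needing care is the bookkeeping of the first paragraph, i.e.\ verifying that the definition of total convergence transfers verbatim under coarsening of the topology and that nothing becomes ill-defined (the Borel $\sigma$-algebra of $\tau_P$ sitting inside $\mathcal X_\infty$, and the set of admissible extensions being topology-independent). One may additionally record, via Corollary~\ref{cor:uniqueness} together with Theorem~\ref{thm:maintc}, that $\mathcal X_\infty=\mathcal F_\pi$ is strategy-independent, so that all the $\tau_{\mathrm{Str}'}$ share the Borel $\sigma$-algebra $\mathcal F_\pi$ and $\tau_P$ is compatible with this filtered space; and that $\tau_P$ is nontrivial because $\varrho_P\subseteq\tau_{\mathrm{Str}'}$ for every $\mathrm{Str}'$ (as established in the proof of Theorem~\ref{prop:Top}) forces $\varrho_P\subseteq\tau_P$.
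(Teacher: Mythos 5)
Your argument is correct and is exactly the reasoning the paper leaves implicit: the corollary is stated without proof as an immediate consequence of Theorem~\ref{thm:tot_conv}, since $\tau_P\subseteq\tau_{\mathrm{Str}}$ and both the set of admissible extensions and the limit measure depend only on the filtration, while weak convergence is inherited under coarsening of the topology. Your bookkeeping about the Borel $\sigma$-algebra of $\tau_P$ sitting inside $\vee_n\mathcal F_n$ is a reasonable extra precaution but does not change the substance.
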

In order to describe the topology $\tau_P$, we will denote by
$[[F]]_*$ the closure of a set $F\subseteq X$ in  a given topology
$*$. Note that the monotonicity of $\pi_n$ implies
\[
[[F]]_{\tau_{\mathrm{Str}}} = \cap_n
[[F]]_{\tau_{\mathrm{Str}_n}}
\]
where $\tau_{\mathrm{Str}_n}$ is the (discrete) topology on
$X$ generated by $\mathcal F_n$. Since $\tau_P$ is the intersection
of all the topologies $\tau_{\mathrm{Str}}$, we have
\[
[[F]]_{\tau_P} \supseteq [[F]]_{\tau_{\mathrm{Str}}} =
\cap_n [[F]]_{\tau_{\mathrm{Str}_n}} ,
\qquad \forall F\in 2^X,\forall
\mathrm{Str}.
\]
\begin{proof}[Proof of Theorem \ref{prop:Top}]
Let $F$ be the closed set in $\varrho_P$ so defined
\[
F := \{y\in X\colon d_P(y,x)\geq r\},
\]
i.e., $F$ is the complementary of an open ball in $(X,d_P)$ with
center $x$ and radius $r$. If we show that $F\in\tau_P$, then we
are done, since the arbitrary choice of $x$ and $r$ spans a base
for the topology $\varrho_P$.

We are going to prove
\[
F = [[F]]_{\tau_{\mathrm{Str}}} =
\cap_m [[F]]_{\tau_{\mathrm{Str}_m}} , \qquad \forall
\mathrm{Str},
\]
which implies \( [[F]]_{\tau_P} = F \). It is always true that
$F\subseteq[[F]]_*$; we prove the nontrivial inclusion $F \supseteq
\cap_m [[F]]_{\tau_{\mathrm{Str}_m}}$. Assume that $y\in
[[F]]_{\tau_{\mathrm{Str}}}$.
Now, $y\in
[[F]]_{\tau_{\mathrm{Str}_m}}$,
for any $m$, and then there exists
a sequence $(y_m)_m$ with $y_m\in
F$ such that $y \, \pi_m \, y_m$, for any $m$.
Thus, $y\in \cap_m [y_m]_m$, where $[x]_m$ is the $\pi_m$-class
of equivalence of $x$. Thus
\[
P^n_m(y_m,T) = P^n_m(y,T) , \qquad \forall m,n
\]
since $P_m(\cdot,T)$ is $\mathcal F_m$-measurable.
By Theorem~\ref{thm:mainta}, for any $n\in\NN$,
\[
|P^{n}(y,T)-P_m^{n}(y,T)| + |P_m^{n}(y_m,T)-P^{n}(y_m,T)|
\mathop{\longrightarrow}_{m\to\infty} 0 .
\]
Now, let $N$ be such that $\sum_{n=N}^{\infty}\beta^n \leq
\frac{\epsilon}{4}$ and take $n_0$ sufficiently large s.t.
\[
\sum_{n=0}^N
|P^{n}(y,T)-P_{n_0}^{n}(y,T)| +
|P_{n_0}^{n}(y_{n_0},T)-P^{n}(y_{n_0},T)|
\leq
\frac{\epsilon}{2}
\]
We have
\begin{multline*}
d_P(y_{n_0},y) = \sum_n\beta^n \big\vert
P^n(y,T)-P^n(y_{n_0},T)\big\vert\
\\
\leq \sum_{n=0}^N \big\vert P^n(y,T)-P^n(y_{n_0},T)\big\vert\ +
2\sum_{n=N}^{\infty}\beta^n \\
\leq \sum_{n=0}^N \Big(|P^{n}(y,T)-P_{n_0}^{n}(y,T)| +
|P^n_{n_0}(y,T)-P^n_{n_0}(y_{n_0},T) |\\
+|P_{n_0}^{n}(y_{n_0},T)-P^{n}(y_{n_0},T)| \Big) +
2\frac{\epsilon}{4}
\leq \frac{\epsilon}{2}
+\frac{\epsilon}{2} = \epsilon .
\end{multline*}
and therefore
\[
d_P(x,y) \geq d_P(x,y_{n_0})- d_P(y_{n_0},y) \geq r - \epsilon .
\]
The arbitrary choice of $\epsilon$ implies $y\in F$, which is the
thesis.
\end{proof}

\appendix
\numberwithin{theorem}{section}
\section{Results on equivalence relations}\label{app:measuring}

In this appendix we give the proof of auxiliary results that connect
equivalency with measurability.
\begin{proof}[Proof of Lemma~\ref{lem:discrete_measur}]
Let $\pi=\pi_f$ be a discrete equivalency on $X$. Then $X/\pi$ defines a
countable measurable partition of $X$. Conversely,
let $\{A_1, A_2,\ldots\}$ be a countable measurable partition
on $X$. Define $f:X\to\NN$ s.t.\ $f(x)=n \iff
x\in A_n$. Therefore $f$ is measurable and $\pi=\pi_f$ is a
discrete equivalency on $X$.
\end{proof}
\begin{lemma}\label{lem:mon_fun_alg_1}
Let $f,g$ be two random variables such that \( g(x)<g(y) \Rightarrow
f(x)<f(y) \). Then $\sigma(g)\subseteq \sigma(f)$.
\end{lemma}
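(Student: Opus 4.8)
The plan is to convert the $\sigma$-algebra inclusion into a functional factorization. By the Doob--Dynkin factorization lemma, a real random variable $g$ satisfies $\sigma(g)\subseteq\sigma(f)$ if and only if $g=h\circ f$ for some Borel measurable $h\colon(\R,\mathcal B_\R)\to(\R,\mathcal B_\R)$; so the whole task reduces to producing such an $h$, and the monotonicity hypothesis on the pair $(f,g)$ is exactly what makes this possible. Note that, unlike the partition-level comparison in Lemma~\ref{lem:ordering_1}, this argument produces an honest Borel function and so does not appeal to Assumption~\eqref{eq:A0def}.

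First I would record the hypothesis in its contrapositive form: for all $x,y\in X$, $f(x)\le f(y)\Rightarrow g(x)\le g(y)$ (if $g(x)>g(y)$, the hypothesis applied to the pair $(y,x)$ forces $f(y)<f(x)$). In particular $f(x)=f(y)\Rightarrow g(x)=g(y)$, so the prescription $h\big(f(x)\big):=g(x)$ unambiguously defines a function $h$ on the range $R:=f(X)\subseteq\R$, and the same implication shows $h$ is nondecreasing on $R$.

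Next I would extend $h$ off $R$ while keeping it monotone, e.g.\ by setting $\widetilde h(t):=\sup\{g(x)\colon x\in X,\ f(x)\le t\}$, with the convention $\sup\varnothing:=-\infty$, viewed as a map $\R\to[-\infty,+\infty]$. The monotone hypothesis gives $\widetilde h\big(f(x_0)\big)=g(x_0)$ for every $x_0$ (the defining supremum is attained at $x_0$ itself), and $\widetilde h$ is nondecreasing, hence Borel measurable, since every superlevel set $\{t\colon\widetilde h(t)>a\}$ is an interval. Thus $g=\widetilde h\circ f$ exhibits $g$ as a Borel function of $f$, so $g^{-1}(B)=f^{-1}\big(\widetilde h^{-1}(B)\big)\in\sigma(f)$ for every Borel $B\subseteq\R$, i.e.\ $\sigma(g)\subseteq\sigma(f)$.

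The only points requiring care — and where I would slow down — are the well-definedness of $h$ on $R$ (handled by the step $f(x)=f(y)\Rightarrow g(x)=g(y)$) and the bookkeeping in the extension, where $\widetilde h$ may a priori take the values $\pm\infty$ on $\R\setminus R$. This is harmless: $\widetilde h$ is finite and equals $g$ precisely on the set $R$ that matters for the composition, so $\widetilde h\circ f$ is genuinely $\R$-valued and coincides with $g$. Everything else is routine.
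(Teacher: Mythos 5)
Your proof is correct, and it takes a genuinely different route from the paper's. The paper argues generator by generator: for each fixed $t$ it sets $t^\ast=\sup f(\{g\le t\})$ and shows, by the monotone comparison hypothesis, that $\{g\le t\}$ equals $\{f\le t^\ast\}$ when the supremum is attained and $\{f<t^\ast\}$ when it is not, so each sublevel set of $g$ lies in $\sigma(f)$. You instead build a single global factorization $g=\widetilde h\circ f$ with $\widetilde h(t)=\sup\{g(x)\colon f(x)\le t\}$ nondecreasing, hence Borel as a map into $[-\infty,+\infty]$; note the two suprema are ``dual'' (the paper takes $\sup$ of $f$ over a sublevel set of $g$, you take $\sup$ of $g$ over a sublevel set of $f$). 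Your key steps all check out: the contrapositive $f(x)\le f(y)\Rightarrow g(x)\le g(y)$ is exactly what makes the supremum defining $\widetilde h(f(x_0))$ be attained at $x_0$, and the possible values $\pm\infty$ of $\widetilde h$ off the range of $f$ (e.g.\ $+\infty$ can occur, as with $f=\arctan g$ and $g$ unbounded) are indeed harmless since $\widetilde h\circ f$ only ever evaluates $\widetilde h$ on $f(X)$, where it is finite. What your approach buys is a slightly stronger and more reusable conclusion --- an explicit monotone Borel $h$ with $g=h\circ f$, from which $\sigma(g)\subseteq\sigma(f)$ is immediate by taking preimages (you only use the trivial direction of Doob--Dynkin, so the appeal to that lemma could be dropped); what the paper's version buys is a completely self-contained set-level verification that never leaves the real line. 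Both arguments are elementary and, as you note, neither uses Assumption~(A0).
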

\begin{proof}
Let $t\in\R$ be fixed. We must prove that $\{g \leq
t\}\in\sigma(f)$. If $\{g \leq t\}=$ or $\{g > t\}$ are empty, then
we are done. Assume then that $\{g \leq t\},\{g >
t\}\neq\varnothing$. We have two cases

- ${t^\ast\in f(\{g \leq t\})}$: $\exists x^\ast\in \{g \leq t\}$
such that $t^\ast = f(x^\ast)$.
\\
By definition of $t^\ast$, $\{g \leq t\}\subseteq\{f\leq t^*\}$.
Conversely, let $y\in\{g>t\}$. Since $g(x^\ast)\leq t<g(y)$, then
$f(x^\ast)= t^\ast<f(y)$, \emph{i.e.}\ $\{g > t\}\subseteq\{f>
t^*\}$. Then $\{g \leq t\}= \{f \leq t^\ast\}\in\sigma(f)$.

- ${t^\ast\not\in f(\{g \leq t\})}$: $\forall x\in \{g \leq t\}$ we
have that $f(x)<t^\ast$.
\\
Then $\{g \leq t\}\subseteq\{f< t^*\}$. Conversely, let
$y\in\{g>t\}$. Since $\forall x\in \{g \leq t\}$ $g(y)>g(x)$, then
$f(y)>f(x)$, which implies $f(y)\geq \sup f(\{g \leq t\})=t^\ast$,
\emph{i.e.}\ $\{g > t\}\subseteq\{f\geq t^*\}$. Then $\{g \leq t\}=
\{f < t^\ast\}\in\sigma(f)$.
\end{proof}

The next lemma plays a central r\^ole. Its proof is common in set
theory.
\begin{lemma}\label{lem:dens_Count}
For all $n\in\NN$, let $\pi_n$ be a discrete measurable equivalency.
Then there
exists a random variable $f$ such that
$\sigma(f)=\vee_n \sigma({X/\pi_n})$.
\end{lemma}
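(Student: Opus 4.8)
The plan is to exhibit $\vee_n\sigma(X/\pi_n)$ as $\sigma(f)$ for a single explicitly coded random variable $f$, using the set-theoretic coding device alluded to above (this is, in effect, Lemma~\ref{lem:count_gen_new} applied to this particular countably generated algebra, so the argument also reproves that statement here).

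First I would reduce to a countable generating family. Each $\pi_n$ being a discrete measurable equivalency, Lemma~\ref{lem:discrete_measur} furnishes an at most countable measurable partition $X/\pi_n=\{A^{(n)}_i\}_{i}$, so every $A^{(n)}_i\in\mathcal X$; enumerating the countable collection $\{A^{(n)}_i\colon n,i\in\NN\}$ as one sequence $(B_k)_{k\geq1}\subseteq\mathcal X$ we obtain $\vee_n\sigma(X/\pi_n)=\sigma(B_1,B_2,\dots)$, since $\sigma\big(\bigcup_n\sigma(\mathcal E_n)\big)=\sigma\big(\bigcup_n\mathcal E_n\big)$. Then I would set
\[
f:=\sum_{k\geq1}\frac{1}{3^{k}}\,I_{B_k}\ \in\Big[0,\tfrac12\Big],
\]
a uniformly convergent series of measurable functions, hence a random variable. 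As $f$ is a pointwise limit of $\sigma(B_1,\dots,B_N)$-measurable functions, $\sigma(f)\subseteq\sigma(B_1,B_2,\dots)=\vee_n\sigma(X/\pi_n)$.

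For the reverse inclusion it suffices to show $B_k\in\sigma(f)$ for every $k$, and the coding is designed to make this transparent: $f$ is written in base $3$ with digits in $\{0,1\}$, so the expansions leave a gap and do not collide, and the $k$-th digit is read off by a Borel function of $f$, namely $I_{B_k}=\lfloor 3^{k}f\rfloor-3\lfloor 3^{k-1}f\rfloor$. Hence each $B_k\in\sigma(f)$, so $\vee_n\sigma(X/\pi_n)=\sigma(B_1,B_2,\dots)\subseteq\sigma(f)$, and combining the two inclusions gives $\sigma(f)=\vee_n\sigma(X/\pi_n)$. The whole argument is routine; the one point to get right --- and the reason base $3$ is used rather than base $2$ --- is that the digit gap makes the recovery of $I_{B_k}$ from $f$ unambiguous, which amounts to the elementary estimate $\sum_{j>k}3^{-j}I_{B_j}\le\tfrac12\,3^{-k}<3^{-k}$ ruling out a carry into the $k$-th place.
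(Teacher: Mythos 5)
Your proof is correct. The reduction to a single countable generating sequence $(B_k)_k$ is legitimate (each $X/\pi_n$ is a countable measurable partition by Lemma~\ref{lem:discrete_measur}, and $\vee_n\sigma(X/\pi_n)=\sigma(\cup_{n,i}\{A^{(n)}_i\})$), the base-$3$ coding $f=\sum_k 3^{-k}I_{B_k}$ is measurable as a uniform limit of $\sigma(B_1,\dots,B_N)$-measurable partial sums, and the digit-recovery identity $I_{B_k}=\lfloor 3^k f\rfloor-3\lfloor 3^{k-1}f\rfloor$ checks out because the tail $\sum_{j>k}3^{k-j}I_{B_j}\le\frac12<1$ never produces a carry; both inclusions follow. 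This is, however, a genuinely different route from the paper's. The paper first monotonizes the sequence ($\pi'_n=\cap_{i\le n}\pi_i$), numbers the atoms of the refining partitions by finite strings $\mathbf{m}_n\in\NN^n$, and encodes them via a nested-interval scheme $g_{n+1}(\mathbf m_n,m_{n+1})=g_n(\mathbf m_n)+h(m_{n+1})\Delta g_n(\mathbf m_n)$ with $h(m)=1-2^{-m}$; this produces a monotone sequence $f_n\uparrow f$ with $\sigma(f_n)=\sigma(X/\pi'_n)$ at every finite stage, and the inclusion $\sigma(f_n)\subseteq\sigma(f)$ is extracted from an order-preservation argument (Lemma~\ref{lem:mon_fun_alg_1}). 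Your coding is shorter, avoids the inequalities~\eqref{eq:prop_g_n} entirely, and handles countably many atoms per level and the join in one stroke; what it does not give is the paper's extra structural information that each $\sigma(X/\pi'_n)$ is realized exactly by one approximant $f_n$ of a monotone sequence converging pointwise to $f$ --- information the paper does not actually need for the statement of the lemma, but which is in the spirit of its approximation theme. As you note, your argument simultaneously reproves Lemma~\ref{lem:count_gen_new} directly, whereas the paper derives that lemma as a corollary of this one, so there is no circularity in your appeal to the reduction step.
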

\begin{proof}
Before proving the core of the Lemma, we build a sequence
$(g_n)_{n\in\NN}$ of functions $g_n:\NN^n\to \R$ that will be used
to define the function $f$.

Take $h:\NN\cup\{0\}\to [0,1)$ to be the increasing function
$h(m)=1-2^{-m}$ and let $(g_n)_{n\in\NN}$ the sequence of function
$g_n:\NN^n\to \R$ so defined:
\begin{align*}
&g_1 (m_1) = h(m_1-1) \\
&g_2 (m_1,m_2) = g_1(m_1)+h(m_2-1)\Delta g_1(m_1) \\
&\vdots
\\
&g_{n+1} (\mathbf{m}_n,m_{n+1}) =
g_{n} (\mathbf{m}_n) +h(m_{n+1})\Delta g_{n} (\mathbf{m}_n) \\
&\vdots
\end{align*}
where, for all $n$, $\mathbf{m}_n=(m_1,\ldots,m_n)$ and
\[
\Delta g_{n} (\mathbf{m}_n) =
g_{n} (\mathbf{m}_{n-1},m_n+1)
-
g_{n} (\mathbf{m}_{n-1},m_n).
\]
As a first consequence of the definition,
note that for any choice of $n$ and
$\mathbf{m}_{n+1}$, it holds that
\begin{equation}\label{eq:prima_prop_g_n}
g_{n} (\mathbf{m}_{n-1},m_n)
\leq g_{n+1} (\mathbf{m}_{n+1}) 
< g_{n} (\mathbf{m}_{n-1},m_n+1)
\end{equation}
since $h\in[0,1)$.
We now prove by induction on $n_1+n_2$ that
for any choice of $n_1\in\NN$,
$n_2\in\NN\cup\{0\}$ and $\mathbf{m}_{n_1+n_2}$,
we have
\begin{equation}\label{eq:prop_g_n}
g_{n_1}(\mathbf m_{n_1-1},m_n) \leq
g_{n_1+n_2}(\mathbf{m}_{n_1+n_2}) <
g_{n_1}(\mathbf{m}_{n_1-1},m_n+1).
\end{equation}
Eq.~\eqref{eq:prop_g_n} is clearly true for $n_1+n_2=1$, since $h$
is strictly monotone. The same argument shows that
Eq.~\eqref{eq:prop_g_n} is always true for $n_2=0$ and therefore we
check it only for $n_2>0$. We assume by induction that
Eq.~\eqref{eq:prop_g_n} is true for $n_1+n_2\leq n$ and we prove it
for $n_1+n_2= n+1$. By using twice the induction hypothesis, as
$n_2-1\geq 0$, we obtain
\begin{align*}
g_{n_1}(\mathbf m_1,m_n) & \leq
g_{n_1+n_2-1}(\mathbf{m}_{n_1+n_2-2},m_{n_1+n_2-1}) \\
& <g_{n_1+n_2-1}(\mathbf{m}_{n_1+n_2-2},m_{n_1+n_2-1}+1)\\
&\leq g_{n_1}(\mathbf m_1,m_n+1).
\end{align*}
Eq.~\eqref{eq:prop_g_n} is now a consequence
of Eq.~\eqref{eq:prima_prop_g_n}.

\null

Now, we come back to the proof of the Lemma.
First note that, without loss of generality, we can (and we do)
require the sequence
$(\pi_n)_{n\in\NN}$ to be monotone, by
taking the sequence $\pi'_n = \cap_{i=1}^n \pi_i$
instead of $\pi_n$. $\pi'_n$ is again a countable measurable
equivalency on $X$. In fact, by Lemma~\ref{lem:discrete_measur}
we can read this statement in trivial terms of partitions:
since at most countable intersection
of families of countable measurable partition
is a countable measurable partition.
Moreover, by definition,
$\vee_{i=1}1^n \sigma(X/\pi_i)=\vee_{i=1}1^n \sigma(X/\pi'_i)$.
%

Let $\tau_n=X/\pi_n$ be the increasing sequence of countable
measurable dissections of $X$. We are going to give a consistent
inductive method of numbering the set of atoms of $\tau_n$ to build
the functions $f_n$. Let $\tau_1 = \{A_{1}^{(1)}, A_{2}^{(1)},
\ldots\}$ be any ordering of $\tau_1$. By induction, let
$\{A_{\mathbf{m}_n, 1}^{(n+1)}, A_{\mathbf{m}_n, 2}^{(n+1)},
\ldots\}$ be the partition of the atom
$A_{\mathbf{m}_n}^{(n)}\in\tau_n$ given by $\tau_{n+1}$. Define, for
any $n\in\NN$, \[f_n(x)=g_n(\mathbf{m}_n) \qquad \iff
\qquad x\in
A_{\mathbf{m}_n}^{(n)}.\]
To complete the proof, we first show that
$\sigma(f_n)=\sigma(X/\pi_n),\forall n$, and then we prove
$\sigma(f)=\sigma(f_1,f_2,\ldots)$ by proving that $f_n\to
f$ pointwise.

\null

To prove that $\sigma(f_n)=\sigma(X/\pi_n)$ we show that $f_n(x)=f_n(y)\iff
\exists\mathbf{m}_n\colon x,y\in A_{\mathbf{m}_n}^{(n)}$.
One implication is a consequence of the fact that $f_n$
is defined on the partition of $X$ given by $X/\pi_n=\tau_n$.
For the converse,
assume that
\(
x\in A_{\mathbf{m}_n}^{(n)} \neq A_{\mathbf{m}_n'}^{(n)}\ni y
\)
and consider $n_1:=\min\{j\leq n\colon m_j\neq m_j'\}$. Thus
$\mathbf{m}_{n_1-1}=\mathbf{m}_{n_1-1}'$ and,
without loss of generalities, $m_{n_1}<m'_{n_1}$.
By Eq.~\eqref{eq:prop_g_n}, we have
\begin{multline*}
f_n(x) = g_n(\mathbf{m}_n) \\
< g_{n_1} (\mathbf{m}_{n_1-1},m_{n_1}+1)
\leq g_{n_1} (\mathbf{m}_{n_1-1}',m_{n_1}') \leq g_n(\mathbf{m}_n')
=f_n(y).\\
\end{multline*}
$\sigma(f)=\sigma(f_1,f_2,\ldots) $.\\
$\subseteq$.
The sequence $(f_n)_n$ is monotone by definition and bounded
by Eq.~\eqref{eq:prop_g_n}. Then $\exists f\colon f_n\uparrow f$
and thus $\sigma(f)\subseteq \sigma(f_1,f_2,\ldots)$.

$\supseteq$. Let $n$ be fixed, and take $x,y\in X$ with
$f_n(x)<f_n(y)$. Then, for any $h\geq 0$,
$\tau_n\subseteq\tau_{n+h}$ implies \( x\in
A_{\mathbf{m}_{n+h}}^{(n+h)} \neq A_{\mathbf{m}_{n+h}'}^{(n+h)}\ni
y. \) As above, consider $n_1:=\min\{j\leq n\colon m_j\neq m_j'\}$.
As $f_n(x)<f_n(y)$, we have $\mathbf{m}_{n_1-1}=\mathbf{m}_{n_1-1}'$
and $m_{n_1}<m'_{n_1}$. Again, by Eq.~\eqref{eq:prop_g_n}, for
$h>n_1+1-n$,
\begin{align*}
f_{n+h}(x) & = g_{n+h}(\mathbf{m}_{n+h})
\\
& < g_{n_1+1}(\mathbf{m}_{n_1},m_{n_1+1}+1) = \alpha
\\
& < g_{n_1}(\mathbf{m}_{n_1-1},m_{n_1}+1)
\\
& \leq g_{n}(\mathbf{m}_{n}') = f_n(y) ,
\end{align*}
\emph{i.e.}, $\forall h$, $f_{n+h}(x)<\alpha<f_n(y)$. As $f_l\uparrow f$,
$f(x)< f(y)$.
Apply Lemma~\ref{lem:mon_fun_alg_1} with $g=f_n$ to
conclude that $\sigma(f_n)\subseteq\sigma(f)$.
\end{proof}

As a consequence of Lemma~\ref{lem:dens_Count}, any countably
generated sub $\sigma$-algebra is generated by a measurable
equivalence $\pi$, as the following lemma states.
\begin{lemma}\label{lem:count_gen_new}
$\mathcal A\subseteq\mathcal X$ is countably generated if and only
if there exists a random variable $f$ such that $\mathcal
A=\sigma(f)$.
\end{lemma}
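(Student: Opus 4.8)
The plan is to prove the two implications separately, leaning on Lemma~\ref{lem:dens_Count} for the nontrivial one.

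For the ``if'' direction, suppose $\mathcal A=\sigma(f)$ for some random variable $f:(X,\mathcal X)\to(\R,\mathcal B_\R)$. Since the Borel $\sigma$-algebra of $\R$ is itself countably generated --- say $\mathcal B_\R=\sigma(\{B_k\colon k\in\NN\})$, e.g. the half-lines with rational endpoints --- and since taking preimages commutes with generating $\sigma$-algebras, we get $\mathcal A=\sigma(f)=f^{-1}(\mathcal B_\R)=f^{-1}\big(\sigma(\{B_k\})\big)=\sigma\big(\{f^{-1}(B_k)\colon k\in\NN\}\big)$. Hence $\mathcal A$ is countably generated.

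For the ``only if'' direction, write $\mathcal A=\sigma(\{A_n\colon n\in\NN\})$. For each $n$ the $\sigma$-algebra $\sigma(A_1,\dots,A_n)$ is generated by finitely many sets, hence has at most $2^n$ atoms, each a finite Boolean combination of $A_1,\dots,A_n$; these atoms form a finite --- in particular countable --- measurable partition of $X$. By Lemma~\ref{lem:discrete_measur} this partition is $X/\pi_n$ for a discrete measurable equivalency $\pi_n$, and $\sigma(X/\pi_n)=\sigma(A_1,\dots,A_n)$. Consequently $\vee_n\sigma(X/\pi_n)=\vee_n\sigma(A_1,\dots,A_n)=\sigma(\{A_n\colon n\in\NN\})=\mathcal A$. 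Applying Lemma~\ref{lem:dens_Count} to the sequence $(\pi_n)_n$ yields a random variable $f$ with $\sigma(f)=\vee_n\sigma(X/\pi_n)=\mathcal A$, which is exactly what we want.

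The only point requiring a moment's care is the verification that the atoms of $\sigma(A_1,\dots,A_n)$ really do constitute a (finite) measurable partition of $X$, but this is immediate from the finiteness just noted. After that, all the genuine work --- the explicit construction of $f$ out of a monotone sequence of countable measurable partitions --- is already packaged in Lemma~\ref{lem:dens_Count}, so there is no remaining obstacle; the statement is essentially a corollary.
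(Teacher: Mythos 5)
Your proof is correct and follows essentially the same route as the paper: the easy direction via preimages of a countable base of $\mathcal B_\R$, and the hard direction by feeding a sequence of discrete partitions generating $\mathcal A$ into Lemma~\ref{lem:dens_Count}. The only (immaterial) difference is that you pre-monotonize by taking the atoms of $\sigma(A_1,\dots,A_n)$, whereas the paper applies the lemma directly to the two-set partitions $\{A_n, X\setminus A_n\}$ and lets the lemma's own proof perform that monotonization.
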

\begin{proof}
$\Rightarrow$ Let $\mathcal A=\sigma(A_1,A_2,\ldots)$. Apply
Lemma~\ref{lem:dens_Count} with
$X/\pi_n=\{A_n,X\setminus A_n\}$.

$\Leftarrow$ Take a countable base $B_1,B_2,\ldots$ of $\mathcal
B_{\mathbb R}$ and simply note that
$\sigma(f)=\sigma(\{f^{-1}(B_1),f^{-1}(B_2),\ldots\})$.
\end{proof}

\begin{proof}[Proof of Lemma \ref{lem:ordering_1}]
Let $x\in X$ be fixed. By hypothesis, $\mathcal A_1 \subseteq
\mathcal A_2$. If $\mathcal A_1=\sigma(A^1_1, A^1_2,\ldots)$ then
$\mathcal A_2$ will be of the form  $\mathcal A_2=\sigma(A^1_1,
A^2_1,A^1_2,A^2_2,\ldots)$. Without loss of generality (if needed,
by choosing $X\setminus A^j_n$ instead of $A^j_n$) we can require
$x\in A^j_n$, for any $n\in\NN$ and $j=1,2$. Then $[x]_{\mathcal
A_2}=\cap_n (A^1_n\cap A^2_n)\subseteq \cap_n A^1_n =[x]_{\mathcal
A_1}$.

The last part of the proof is a consequence of
Lemma~\ref{lem:count_gen_new} and of the first point, since
\[
f^{-1}(\{f(x)\})=[x]_{\pi_f}\subseteq
[x]_{\pi_g}=g^{-1}(\{g(x)\}),
\]
or, equivalently, $f(x)=f(y)\Rightarrow g(x)=g(y)$
which is the thesis.
\end{proof}

\begin{proof}[Proof of Theorem~\ref{limit_eq}]
Note that $X/\pi_\infty\subseteq \mathcal X$ is countable, generated
by $\cup_n X/\pi_n$. Then $\pi_\infty$ is a measurable equivalency
by Lemma~\ref{lem:count_gen_new}.

Conversely, we can use the standard
approximation technique: if $\pi=\pi_f$ is measurable,
let $f_n = 2^{-n}\lfloor 2^n f\rfloor$ for any $n$.
Since $f_n$ are discrete random variables,
$\pi_n$ are defined through Lemma~\ref{lem:discrete_measur}.
By Lemma~\ref{lem:ordering_1} and Eq.~\eqref{eq:vee_and_cap},
the thesis $\pi_f=\cap_n\pi_n$
will be a consequence
of the fact that $\sigma(f)=\vee_n \sigma(f_n)$.

$\sigma(f_n)\subseteq \sigma(f)$ by definition,
which implies $\sigma(f_1,f_2,\ldots)\subseteq \sigma(f)$. Finally, as
$f_n\to f$, we have $\sigma(f)\subseteq \sigma(f_1,f_2,\ldots)$,
which completes the proof.
\end{proof}

\section{Proof of Theorem~\ref{thm:why_A0}}\label{app:A0}
Before proving the theorem, we state the following Lemma.
\begin{lemma}\label{lem:why_A02}
Let $(X,\mathcal X)$ be a measurable space.
\begin{enumerate}
 \item\label{col2:1} If any equivalence relationship $\pi$
on $X$ is measurable, then $\mathcal X=2^X$ and $\mathrm{card}(X)\leq
\mathrm{card}(\mathbb R)$.
 \item\label{col2:2} The converse is true under the axiom of choice.
\end{enumerate}
\end{lemma}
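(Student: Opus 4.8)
The plan is to argue both directions straight from the definition of a measurable equivalence: $\pi$ is measurable iff $\pi=\pi_f$ for some $\mathcal X$-$\mathcal B_{\mathbb R}$-measurable $f\colon X\to\mathbb R$, i.e.\ $x\,\pi\,y\iff f(x)=f(y)$.

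For \eqref{col2:1} I would first show $\mathcal X=2^X$. Fix an arbitrary $A\subseteq X$; the cases $A\in\{\emptyset,X\}$ are trivial, so suppose $\emptyset\neq A\subsetneq X$ and consider the equivalence relation $\pi$ whose classes are the singletons $\{x\}$ for $x\in A$ together with the single block $X\setminus A$ (formally $x\,\pi\,y$ iff $x=y$ or $x,y\notin A$; reflexivity, symmetry and transitivity are routine). By hypothesis $\pi=\pi_f$; choosing $z_0\in X\setminus A$ and putting $c:=f(z_0)$, every $x\in X\setminus A$ satisfies $x\,\pi\,z_0$ and hence $f(x)=c$, while no $x\in A$ is $\pi$-equivalent to $z_0$ and hence $f(x)\neq c$. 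Thus $A=f^{-1}(\mathbb R\setminus\{c\})\in\mathcal X$, so $\mathcal X=2^X$. For the cardinality statement, apply the hypothesis to the equality relation $\{(x,x)\colon x\in X\}$: it equals $\pi_f$ for some $f$, and then $x\,\pi_f\,y\iff x=y$ forces $f$ injective, so $\mathrm{card}(X)\le\mathrm{card}(\mathbb R)$.

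For \eqref{col2:2}, assume $\mathcal X=2^X$ and fix an injection $\iota\colon X\hookrightarrow\mathbb R$ (which exists by the cardinality hypothesis). Let $\pi$ be an arbitrary equivalence relation on $X$. Invoking the axiom of choice, pick a representative $x_C\in C$ for each class $C\in X/\pi$ and set $f(x):=\iota(x_{[x]_\pi})$. Since $\mathcal X=2^X$, every function $X\to\mathbb R$ is measurable, so $f$ is a random variable; and by injectivity of $\iota$ together with $x_C\in C$ one has $f(x)=f(y)\iff x_{[x]_\pi}=x_{[y]_\pi}\iff [x]_\pi=[y]_\pi\iff x\,\pi\,y$. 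Hence $\pi=\pi_f$ is measurable.

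The computation is elementary; the only point worth flagging is that \eqref{col2:2} genuinely requires a choice principle, used precisely to select the family of class representatives $(x_C)_{C\in X/\pi}$ (equivalently, to guarantee $\mathrm{card}(X/\pi)\le\mathrm{card}(X)$), which is why the hypothesis "under the axiom of choice" cannot be dropped as stated.
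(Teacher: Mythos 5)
Your proof is correct and follows essentially the same route as the paper's: the identity relation yields the injectivity/cardinality bound, an $A$-dependent equivalence yields $\mathcal X=2^X$, and the converse uses the axiom of choice to produce an injection of $X/\pi$ into $\mathbb R$ and then relies on $\mathcal X=2^X$ for measurability. The only cosmetic differences are that the paper uses the two-block partition $\{A,X\setminus A\}$ (rather than your singletons-on-$A$ relation) for the first part, and realizes the injection of the quotient by extending $\pi$ to an equivalence on $\mathbb R$ instead of choosing class representatives directly.
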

\begin{proof}
$\ref{col2:1}\Rightarrow \ref{col2:2}$.
Let $\pi_I$ be the identity relation: $ x\,\pi_I\,y \iff x=y$. By
hypothesis, there exists $f$ such that $\pi_I=\pi_f$, and thus $f$ is
injective. Then $\mathrm{card}(X)\leq \mathrm{card}(\mathbb R)$.
Now, take $A\subseteq X$ and let $\pi_A$ be the relation so defined:
\[
x\,\pi_A\,y \iff \{x,y\}\subseteq A\text{ or } \{x,y\}\subseteq
X\setminus A.
\]
Since any equivalency is measurable, then there exists
$f:(X,\mathcal X)\to(\mathbb R,\mathcal
B_\mathbb R)$ such that $\pi_A=\pi_f$. But $\sigma(f)=\sigma(A)$,
which shows that $A\subseteq X\Longrightarrow A\in\mathcal X$, i.e.\
$\mathcal X=2^X$.

$\ref{col2:2}\Rightarrow \ref{col2:1}$.
Since $\mathrm{card}(X)\leq \mathrm{card}(\mathbb R)$,
there exists an injective function $h:X\to\mathbb R$. Let $\pi$ be a
equivalence relationship on $X$, and define the following
equivalence on $\mathbb R$:
\[
 r_1\, R\, r_2 \iff \Big( \{r_1,r_2\}\subseteq h(X) \text{ and } h^{-1}(r_1)\,
\pi\, h^{-1}(r_2)\Big) \text{ or } \{r_1,r_2\}\subseteq \mathbb
R\setminus h(X)
\]
By definition of $R$, if we denote by $\pi_R$ the canonical
projection of $\mathbb R$ on $\mathbb R/R$, then $\pi_R\circ
h:X\to\mathbb R/R$ is such that
\[
\pi_R\circ h (x)=\pi_R\circ h (y) \iff x\, \pi\,y.
\]
The axiom of choice ensures the existence of a injective map
$g:\mathbb R/R\to \mathbb R$. Then $f:=g\circ \pi_R\circ h :X\to
\mathbb R$ is such that $\pi=\pi_f$. $f$ is measurable since
$\mathcal X = 2^X$.
\end{proof}

\begin{proof}[Proof of Theorem~\ref{thm:why_A0}]
$\ref{co:3}\Rightarrow \ref{co:2}$. By
Lemma~\ref{lem:why_A02} and  Assumption~\eqref{eq:A0def},
$(X,2^X)$ is weakly Blackwell.

$\ref{co:2}\Rightarrow \ref{co:1}$. Assume $X$ is uncountable. By CH,
exists $Y\subseteq X$ s.t.
$Y\mathop{\leftrightarrow}\limits^{g_1}\mathbb R$ (i.e.\ $Y$
is in bijection with $\mathbb R$ via $g_1$). Take a bijection
$\mathbb R\mathop{\leftrightarrow}\limits^{g_2}\mathbb R\setminus \{0\}$.
Then the map
\[
g(x) =
\begin{cases}
g_2(g_1(x)) & \text{if }x\in Y;\\
0 & \text{if }x\in X\setminus Y;
\end{cases}
\]
is a
bijective map from $\{Y,\{X\setminus Y\}\}$ to $\mathbb R$.
Equip $\mathbb R$ with the Borel $\sigma$-algebra $\mathcal
B_{\mathbb R}$ and let $\mathcal A_1=g^{-1}(\mathcal B_{\mathbb
R})$. $\mathcal A_1$ is countably generated and its atoms are all
the points in $Y$ and the set $X\setminus Y$. Now, take a non-Borel
set $N$ of the real line. $\mathcal A_2=g^{-1}( \sigma(\mathcal
B_{\mathbb R},N))$ is also countably generated, $\mathcal A_1\subsetneq
\mathcal A_2$  and its atoms are all
the points in $Y$ and the set $X\setminus Y$, too. Since $\mathcal
A_1\subseteq 2^X$ and $\mathcal A_2\subseteq 2^X$, $(X,2^X)$ is not
a weakly Blackwell space by Lemma~\ref{lem:A0_A1}.

$\ref{co:1}\Rightarrow \ref{co:3}$. Since $X$ is countable, then
$X/\pi$ is. Therefore, Lemma~\ref{lem:discrete_measur} ensures any equivalence
$\pi$ is measurable, since $\mathcal X = 2^X$.
Finally, just note that each countable set is strongly Blackwell.
And thus Lemma~\ref{lem:A0_A1} concludes the proof.
\end{proof}



\providecommand{\bysame}{\leavevmode\hbox to3em{\hrulefill}\thinspace}
\providecommand{\MR}{\relax\ifhmode\unskip\space\fi MR }
\providecommand{\MRhref}[2]{%
  \href{http://www.ams.org/mathscinet-getitem?mr=#1}{#2}
}
\providecommand{\href}[2]{#2}

\end{document}